\newcommand*\mytitle{Universal Analog Computation}
\newcommand*\mysubtitle{Fra\"{i}ss\'{e} Limits of Dynamical Systems}
\newcommand*\myauthor{Levin Hornischer}
\newcommand*\mykeywords{Analog computation, dynamical systems, category theory, Fra\"{i}ss\'{e} limit,  coalgebra, domain theory.}
\setlist{noitemsep}
\definecolor{darkgreen}{RGB}{33, 97, 11} 
\definecolor{darkblue}{RGB}{11, 56, 97} 
\definecolor{darkred}{RGB}{97, 11, 11} 
\definecolor{lightgray}{gray}{0.7}
\theoremstyle{plain}
\newtheorem{theorem}{Theorem}[section] 
\newtheorem*{theorem*}{Theorem}
\newtheorem{lemma}[theorem]{Lemma}
\newtheorem{proposition}[theorem]{Proposition}
\newtheorem{corollary}[theorem]{Corollary}
\theoremstyle{definition}
\newtheorem{definition}[theorem]{Definition}
\newtheorem{example}[theorem]{Example}
\newtheorem{remark}[theorem]{Remark}
\newcommand{\Fraisse}{Fra\"{i}ss\'{e}}
\newcommand{\Jonsson}{J\'{o}nsson}
\newcommand{\Gr}{\mathrm{Gr}}
\newcommand{\Cl}{\mathrm{Cl}}
\newcommand{\Clp}{\mathrm{Clp}}
\newcommand{\CoAt}{\mathrm{CoAt}}
\newcommand{\id}{\mathrm{id}}
\newcommand{\F}{\mathsf{F}}
\newcommand{\G}{\mathsf{G}}
\newcommand{\Refl}{\mathsf{R}}
\newcommand{\Alg}{\mathsf{Alg}}
\newcommand{\dynAlg}{\mathsf{dynAlg}}
\newcommand{\bAlgc}{\mathsf{b}\mathsf{Alg}^\mathsf{c}}
\newcommand{\czPolm}{\mathsf{cz}\mathsf{Pol}^\mathsf{m}}
\newcommand{\Inc}{\mathsf{Inc}}
\newcommand{\Path}{\mathsf{Path}}
\newcommand{\allSys}{\mathsf{all}\mathsf{Sys}}
\newcommand{\allSysef}{{\mathsf{all}\mathsf{Sys}^\mathsf{ef}}}
\newcommand{\Sysef}{{\mathsf{Sys}^\mathsf{ef}}}
\newcommand{\Syseffin}{\mathsf{Sys}^\mathsf{ef}_\mathsf{fin}}
\newcommand{\subSysef}{\mathsf{S}^\mathsf{ef}}
\newcommand{\detSysef}{\mathsf{det}\mathsf{Sys}^\mathsf{ef}}
\newcommand{\tSysef}{\mathsf{total}\mathsf{Sys}^\mathsf{ef}}
\newcommand{\DSef}{\mathsf{DS}^\mathsf{ef}}
\newcommand{\SHI}{\mathsf{SHI}}
\newcommand{\SFT}{\mathsf{SFT}}
\newcommand{\SOF}{\mathsf{SOF}}
\begin{document}

\title[\mytitle{}]{\mytitle{}:\\\mysubtitle{}}
\author{Levin Hornischer}
\date{}
\address{Munich Center for Mathematical Philosophy, LMU Munich} 
\email{levin.hornischer@lmu.de}
\keywords{\mykeywords{}}
\thanks{For inspiring discussions and comments, I am grateful to Johan van Benthem, Herbert Jaeger, Michiel van Lambalgen, Hannes Leitgeb, Marcus Pivato, Jan-Willem Romeijn, and the audiences at the Munich Center for Mathematical Philosophy (LMU Munich) and the University of Groningen. Many thanks, in particular, to Marcus Pivato for very helpful comments on an earlier version of the manuscript. Parts of the work were funded by the Bavarian Ministry for Digital Affairs.
}
\subjclass{Primary 68Q09, 37B10; secondary 18A35, 03C30}

\begin{abstract}
Analog computation is an alternative to digital computation, that has recently re-gained prominence, since it includes neural networks and neuromorphic computing. Further important examples are cellular automata and differential analyzers. 
While analog computers offer many advantages, they lack a notion of universality akin to universal digital computers. 
Since analog computers are best formalized as dynamical systems, we review scattered results on universal dynamical systems. We identify four senses of universality and connect to the two main general theories of computation: coalgebra and domain theory.
For nondeterministic systems, we construct a universal system as a Fra\"{i}ss\'{e} limit. It not only is universal in many of the identified senses, it also is unique in additionally being homogeneous. 
For deterministic systems, a universal system cannot exist, but we provide a simple method for constructing subclasses of deterministic systems with a universal and homogeneous system. This way, we introduce sofic proshifts: those systems that are limits of sofic shifts. In fact, their universal and homogeneous system even is a limit of shifts of finite type and has the shadowing property.
\end{abstract}

\maketitle

\section{Introduction}
\label{sec: introduction}

Analog computation is an alternative to digital computation, with a long and rich history~\cite{Ulmann2013,Bournez2021}. 
Examples include \emph{neural networks}, the drivers of the present surge of artificial intelligence. They do not manipulate discrete symbols according to a bespoke program as in digital computation. Rather, they operate on real numbers according to their internal parameters, called weights, which typically are learned from data in a continuous updating process.\footnote{As an abstract model of computation, neural networks are analog. But when they are (physically) implemented, they usually are still simulated on digital hardware. However, developing analog or `neuromorphic' hardware for neural networks is an active field of research~\cite{Kudithipudi2025}.} More generally, \emph{neuromorphic computing} develops computing systems which are inspired by the energy-efficient processing of biological brains. Other famous examples of analog computation are \emph{cellular automata}, which distributively compute global behavior of cells via local update rules; or the \emph{differential analyzer}, which computes by solving differential equations.

A major advantage that secured digital computation its dominant role is that it allows for universal computation: famously, the universal Turing machine can simulate all other digital machines. In analog computation, on the other hand, one typically has to construct one machine for each task.
After all, the fairly uniform architecture of laptops stands in contrast to the many neural network architectures (as mandated, in a sense, by the No-Free-Lunch theorems).
Notwithstanding, in this paper, we investigate when a universal analog computer can exists---and when not.

The plan is as follows. In section~\ref{sec: literature review}, we first recall that analog computers are best conceptualized as dynamical systems. Then we review the scattered literature on universal analog computation and universal dynamical systems, identifying four senses of universality. We also locate our results within existing work.

In section~\ref{sec: categories of dynamical systems}, we motivate and define the notion of time-discrete, space-continuous, possibly nondeterministic dynamical system that we use. Specifically, a \emph{dynamical system} here is a pair $(X,T)$ where $X$ is a zero-dimensional and compact metrizable space and $T : X \rightrightarrows X$ is a closed-valued and upper-hemicontinuos multifunction. We also call $(X,T)$ a \emph{nondeterministic system}, and if $T$ is a function, we speak of a \emph{deterministic system}. We introduce embedding-factor pairs as the morphisms between dynamical systems. Equivalently, a morphism is a \emph{factor} $f : (X,T) \to (Y,S)$, i.e., a surjective continuous function $f : X \to Y$ such that $y'$ is an $S$-image of $y$ iff there are $x$ in $X$ and a $T$-image $x'$ of $x$ such that $f(x) = y$ and $f(x') = y'$ (cf.\ bisimulations). For deterministic systems, this amounts to the usual notion of a factor, and this more general definition is also used for profinite graphs~\cite{Irwin2006}.
With this framework in place, we formally state our main results in section~\ref{sec: statement of the results}.

In section~\ref{sec: coalgebra and domain theory}, we relate our dynamical systems to coalgebras and domain theory, thus contextualizing them within the two main general theories of computation. In particular, we show that our category of dynamical systems is equivalent to a category with a `domain-theoretic spirit': the objects of this category are pairs $(A,\alpha)$ where $A$ is an algebraic lattice (with an additional property) and $\alpha : A \to A$ is a Scott-continuous function (with an additional property), and the morphisms $(\epsilon, \pi) : (A, \alpha) \to (B,\beta)$ are embedding-projections pairs between $A$ and $B$ (with an additional property) such that $\pi \circ \beta \circ \epsilon \leq \alpha$ and $\alpha \circ \pi \leq \pi \circ \beta$.\footnote{Note that these conditions are weaker than what is required for a coalgebra morphism between $(A, \alpha)$ and $(B, \beta)$, where one would require $\alpha \circ \pi = \pi \circ \beta$, which implies $\pi \circ \beta \circ \epsilon =  \alpha \circ \pi \circ \epsilon = \alpha$. In particular, to build universal systems, we cannot apply existence results of a final/terminal coalgebra~\cite{Rutten2000, Adamek2018}.}

Sections~\ref{sec: algebroidal categories of systems}--\ref{sec: universal and homogeneous nondeterministic system} establish our universality result for nondeterministic systems. We use the category-theoretic version of \Fraisse{} limits due to~\textcite{Droste1993}, which works in so-called algebroidal categories. 
In section~\ref{sec: algebroidal categories of systems}, prove a theorem with sufficient conditions for algebroidality and conclude that the category of all dynamical systems is algebroidal. 
In section~\ref{sec: universal and homogeneous nondeterministic system}, we then use the \Fraisse{} limit to prove the following universality result (generalizing one for undirected profinite graphs~\cite{Irwin2006, Camerlo2010, Geschke2022}):
\begin{theorem*}[Theorem~\ref{thm: universal nondeterministic system} below]
There is a nondeterministic system $(U,T)$ that is
\begin{enumerate}
\item
\emph{Universal}: For any system $(Y,S)$, there is a factor $f : (U,T) \to (Y,S)$ (provided that $S$ is nontrivial, i.e., there is $y \in Y$ with $S(y) \neq\emptyset$).
\item
\emph{Homogeneous}: If $(Y, S)$ is a system over a finite set $Y$ and $f , f' : (U,T) \to (Y,S)$ are two factors, then there is an isomorphism $\varphi : (U, T ) \to (U, T )$ such that $f = f' \circ  \varphi$.
\end{enumerate}
Moreover, $(U, T)$ is unique up to isomorphism with these two properties.
\end{theorem*}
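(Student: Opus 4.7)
The plan is to apply Droste's construction of \Fraisse{} limits in algebroidal categories. By Sections~\ref{sec: algebroidal categories of systems}--\ref{sec: proof of the algebroidality theorem} the category $\allSys$ is algebroidal, so the main tasks are to verify the two combinatorial hypotheses of Droste's construction on the subcategory $\Syseffin$ of finite systems, and then to extract the three properties of the corollary from Droste's general conclusions.

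First, I verify joint embedding and amalgamation for $\Syseffin$ in the factor direction. Joint embedding amounts to finding, for finite systems $(A,\alpha)$ and $(B,\beta)$, a finite system factoring onto both; a natural candidate is the product $(A \times B, \alpha \times \beta)$ (or a suitable subsystem) with coordinatewise dynamics, so that the projections realize the two factors. Amalgamation amounts to the following: given factors $g_i : (D_i,\delta_i) \to (C,\gamma)$ for $i=1,2$, I form the fiber product $P = \{(d_1,d_2) \in D_1 \times D_2 : g_1(d_1) = g_2(d_2)\}$ with $\delta_P(d_1,d_2) = \{(d_1',d_2') \in \delta_1(d_1) \times \delta_2(d_2) : g_1(d_1') = g_2(d_2')\}$ and check that the two projections are factors compatible with the $g_i$. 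Topological side conditions are automatic in the finite setting, so only the bisimulation-type factor conditions need to be checked, and both verifications are routine.

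Droste's theorem then yields an object $(U,T) \in \allSys$, unique up to isomorphism, that is universal (every finite system in $\Syseffin$ is a factor of $(U,T)$) and homogeneous (any two factors from $(U,T)$ onto a fixed finite system differ by an automorphism of $(U,T)$). Homogeneity for finite targets, and uniqueness-up-to-isomorphism, are then direct translations of Droste's abstract conclusions (with a standard back-and-forth argument yielding the latter). For universality against an arbitrary nontrivial system $(Y,S)$, I use the fact that since $Y$ is zero-dimensional, compact, and metrizable, $(Y,S)$ is expressible as an inverse limit of finite systems $(Y_n,S_n)$ along factors---a perspective made available by the domain-theoretic correspondence of Section~\ref{sec: coalgebra and domain theory}. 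Finite universality gives factors $f_n : (U,T) \to (Y_n,S_n)$, and homogeneity lets me adjust these inductively by automorphisms of $(U,T)$ so that they commute with the bonding factors; the inverse limit of the adjusted sequence yields the desired $f : (U,T) \to (Y,S)$.

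The main obstacle is this last coherence step, where homogeneity must be invoked at every stage to correct $f_{n+1}$ relative to $f_n$; the resulting sequence of automorphisms needs to be controlled so that its limit behavior is well-defined and the pointwise limit is indeed a factor onto $(Y,S)$. The nontriviality condition is essential here: if $S(y) = \emptyset$ for all $y \in Y$, any factor $f : (U,T) \to (Y,S)$ would have to send some $x \in U$ with $T(x) \neq \emptyset$ to an $f(x)$ possessing an $S$-image (by the definition of factor), which does not exist.
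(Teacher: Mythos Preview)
Your amalgamation argument via the fiber product is exactly what the paper does, and your joint-embedding argument via the product is a fine alternative to the paper's use of an initial object (the single self-loop). So the combinatorial hypotheses are handled correctly.

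The genuine gap is in how you read Droste's theorem. You state that it yields an object $(U,T)$ such that ``every finite system in $\Syseffin$ is a factor of $(U,T)$'', and then you embark on an inverse-limit coherence argument to upgrade this to arbitrary nontrivial $(Y,S)$. But Droste's theorem (Theorem~\ref{thm: category theoretic fraisse jonsson}) already delivers $\mathsf{C}$-universality, not merely $\mathsf{C}_\mathsf{fin}$-universality: the \Fraisse{} limit is universal for \emph{all} objects of the algebroidal category $\mathsf{C} = \Sysef$, and the passage from finite to arbitrary objects is built into the proof of that theorem (precisely because every object of an algebroidal category is a colimit of an $\omega$-chain of finite ones). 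So your entire ``main obstacle'' paragraph is unnecessary, and the corollary follows immediately once joint embedding and amalgamation are verified.

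This matters beyond economy: the coherence step you sketch is not obviously correct as written. Composing an infinite sequence of automorphisms $\varphi_n$ of $(U,T)$ need not converge to an automorphism, and $\mathsf{C}_\mathsf{fin}$-homogeneity only lets you correct factors onto \emph{finite} targets, so at stage $n+1$ you would need to absorb the correction of $f_{n+1}$ relative to $f_n$ into the tower in a way that stabilizes. Making this precise is essentially redoing part of Droste's proof. Just invoke the theorem as stated.
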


In the remainder, we then turn to deterministic systems.
Section~\ref{sec: universality for deterministic systems} analyzes why universality fails for deterministic systems. The reason comes from the so-called category-theoretically finite objects. In an algebroidal category, there should only be countably many of those (up to isomorphism). Indeed, for nondeterministic systems, the category-theoretically finite objects simply are the systems with a finite state space. But for deterministic systems, we show that the category-theoretically finite objects are the shift spaces, and there are uncountably many of those systems.

However, the category of deterministic systems satisfies all other conditions for being algebroidal (i.e., it is semi-algebroidal). Thus, in section~\ref{sec: algebroidal categories of deterministic systems}, we construct algebroidal subcategories. Any countable collection $S$ of shift spaces induces, by taking all limits along countable inverse sequences, an algebroidal category $\DSef(S)$ of deterministic systems.
A natural choice for $S$ is either the class $\SFT$ of shifts of finite type or the class $\SOF$ of sofic shifts. They are among the most studied classes of deterministic systems, and sofic shifts have close ties to automata theory. We call the systems in $\DSef(\SFT)$ (resp., $\DSef(\SOF)$) the \emph{$\omega$-proshifts of finite type} (resp., the \emph{sofic $\omega$-proshifts}). 
In section~\ref{sec: proshifts}, we then show our second main universality result:

\begin{theorem*}[Theorem~\ref{thm: universal and homogeneous proshifts} below]
There is a unique universal and homogeneous $\omega$-proshift of finite type, and it also is the unique universal and homogeneous sofic $\omega$-proshift.
\end{theorem*}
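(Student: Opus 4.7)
The plan is to apply the category-theoretic Fraïssé construction of Droste--Göbel in each of the algebroidal categories $\DSef(\SFT)$ and $\DSef(\SOF)$---exactly as in Section~\ref{sec: universal and homogeneous nondeterministic system} for nondeterministic systems---and then to identify the two resulting limits. First I would verify the joint embedding and amalgamation properties in both categories, so that the Fraïssé theorem applies and each category has a universal and homogeneous object, unique up to isomorphism. JEP is witnessed by products $X_1 \times X_2$ with the diagonal dynamics, and AP by fibered products over the common quotient; both constructions preserve finite type and soficity. Combined with the countability of SFTs and of sofic shifts (each being finitely describable), this yields unique universal and homogeneous objects $U_{\SFT} \in \DSef(\SFT)$ and $U_{\SOF} \in \DSef(\SOF)$.

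Second I would show $U_{\SFT} \cong U_{\SOF}$. Since $\SFT \subseteq \SOF$, the object $U_{\SFT}$ already lies in $\DSef(\SOF)$, so by the uniqueness above it suffices to show that $U_{\SFT}$ is universal and homogeneous there. The key input is the classical fact that every sofic shift is a factor of an SFT. For universality, given a sofic $\omega$-proshift $Y = \varprojlim Y_n$, I would inductively build compatible factors $f_n : U_{\SFT} \to Y_n$: at stage $n$, pick an SFT $X_n$ covering $Y_n$, use universality for SFTs to get a factor $U_{\SFT} \to X_n$, compose to get a factor $U_{\SFT} \to Y_n$, and apply homogeneity of $U_{\SFT}$ on a suitable SFT amalgamation over $Y_{n-1}$ to repair compatibility with $f_{n-1}$. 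The inverse limit then furnishes $f : U_{\SFT} \to Y$, which is surjective by compactness. Homogeneity for sofic targets transfers by an analogous lifting argument through SFT covers: lift two factors $U_{\SFT} \to Y$ to factors $U_{\SFT} \to X$ along an SFT cover $X \to Y$, apply SFT-homogeneity to obtain the desired automorphism, and descend.

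The main obstacle, common to both the universality and the homogeneity transfer, will be the lifting step: given a factor $U_{\SFT} \to Y$ with $Y$ sofic and an SFT cover $X \to Y$, produce a compatible factor $U_{\SFT} \to X$. This is the projective extension property characterising Fraïssé limits, and I expect it to follow by combining the amalgamation property for SFTs with the inverse-limit construction of $U_{\SFT}$---propagating the required lift through the defining SFT tower level by level, with homogeneity invoked at each stage to align the choices. Beyond this, the remaining work is a routine verification that the classical product and fibered-product constructions on shift spaces preserve the relevant classes in the factor-based morphism category of Section~\ref{sec: algebroidal categories of deterministic systems}; the overall architecture of the proof is dictated by the Fraïssé framework already set up in the paper.
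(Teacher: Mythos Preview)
Your proposal is correct. The existence part matches the paper almost exactly: the paper also uses the fibered product for amalgamation (Lemma~\ref{lem: amalgamation for shifts of finite type}, Proposition~\ref{prop: amalgamation of shifts}), and derives JEP from AP via the initial one-point shift rather than via products, but that is a cosmetic difference.

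For the identification $U_{\SFT}\cong U_{\SOF}$, your route genuinely differs from the paper's. You argue that $U_{\SFT}$ already satisfies the characterising properties (universality and homogeneity) inside $\DSef(\SOF)$ and then invoke uniqueness; the paper instead constructs the isomorphism directly by a back-and-forth between the two defining $\omega$-chains (Proposition~\ref{prop: isomorphism for universal proshifts}), interleaving $\SFT$-saturation of $U_{\SFT}$ with $\SOF$-saturation of $U_{\SOF}$ and inserting auxiliary SFTs $Z_i$ at each rung of a ladder diagram. Both arguments rest on the same two facts---every sofic shift is a factor of an SFT, and a fibered product of SFTs over any shift is again an SFT---and the lifting step you single out as the main obstacle is exactly what the paper's $Z_i$ accomplish. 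Your packaging has the advantage of isolating a clean reusable lemma (``$U_{\SFT}$ is $\SOF$-saturated''), from which both $\SOF$-universality and $\SOF$-homogeneity follow at once; the paper's hands-on construction avoids that extra layer of abstraction. One caution: your universality step (``apply homogeneity of $U_{\SFT}$ on a suitable SFT amalgamation over $Y_{n-1}$ to repair compatibility'') is roundabout and, read naively, looks circular, since it seems to invoke homogeneity for a \emph{sofic} target. It becomes clean if you phrase it directly as saturation: given $f_{n-1}:U_{\SFT}\to Y_{n-1}$ and the bonding map $Y_n\to Y_{n-1}$, factor $f_{n-1}$ through an SFT level $X_m$ of $U_{\SFT}$ (using that $Y_{n-1}$ is category-theoretically finite in $\DSef(\SOF)$), amalgamate $X_m$ with an SFT cover of $Y_n$ over $Y_{n-1}$ (still SFT by Lemma~\ref{lem: amalgamation for shifts of finite type}), and apply $\SFT$-saturation of $U_{\SFT}$. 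That is precisely the mechanism behind the paper's ladder, just organised differently.
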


\noindent
For brevity, we call this system the \emph{universal proshift}. While it is not transitive, it does have the desirable shadowing property (which roughly says that an approximate orbit simulated step by step with rounding errors is close to a true orbit). 
We conclude in section~\ref{sec: conclusion} with questions for future research.
An appendix contains outsourced proofs.

\section{Background: Four senses of universal analog computation}
\label{sec: literature review}

With remarkable conceptual clarity, \textcite{Turing1936} formalized digital computation by what is since known as Turing machines. Turing also constructed universal machines.\footnote{Via G\"{o}del encoding, a specific Turing machine can be constructed which simulates, in a clear sense, any other Turing machine. Though, defining what it means, for an arbitrary Turing machine, to be universal is more subtle~\cite{Davis1957}.} However, for analog computation, things are more complicated. There are many nonequivalent models of analog computation~\cite{Bournez2021}. So what would even be a general enough definition of an analog computer? The answer is: they can all be regarded as dynamical systems~\cite{Siegelmann1998, Bournez2016, Bournez2021, Giunti1997, Stepney2012}.
(The converse---i.e., which dynamical systems are analog computers---is a deep philosophical issue beyond scope of this paper, but see, e.g., \cite{Piccinini2021, Gandy1980, Pitowsky1990, Smolensky1988, Sieg2002}.)
At any time, the analog computer is in some state, and when it computes, it updates its state: this is the dynamics. 
This includes---and hence generalizes---digital computation: the state of a digital computer is, roughly, its memory and CPU state, and the dynamics is given by the program that it runs.

So different models of analog computation correspond to different classes of dynamical systems. But what would it mean to say that one analog computer---i.e., one dynamical system $U$---is universal (in its class)? We identify four answers in the scattered literature:
\begin{enumerate}
\item
\label{itm: notion of universality Turing}
\emph{Turing universal}: $U$ can simulate every Turing machine.

\item
\label{itm: notion of universality approximation}
\emph{Approximation universal}: $U$ can approximate any other dynamical system (in the class) up to arbitrary precision.

\item
\label{itm: notion of universality embedding}
\emph{Embedding universal}: any other dynamical system (in the class) can be embedded into $U$.

\item
\label{itm: notion of universality factor}
\emph{Factor universal}: any other dynamical system (in the class) is a \emph{factor} of $U$, i.e., an image under a dynamics-preserving function with domain $U$.
\end{enumerate}

Concerning~\ref{itm: notion of universality Turing}, von~Neumann showed that there are Turing universal cellular automata (see~\cite{Ollinger2012} for discussion) and this also has been done for other models~\cite{Orponen1996}.
Moreover, recurrent neural networks with rational weights can simulate arbitrary Turing machines~\cite{Siegelmann1994}. For continuous-time Hopfield networks, see~\cite{Sima2003a}, and for an overview, see~\cite{Sima2003}.
Furthermore, \textcite{Tao2020} provides a `potential well' dynamical system such that ``the halting of any Turing machine with a given input is equivalent to a certain bounded trajectory in this system entering a certain open set''~(p.~1).
Yet further dynamical systems that are Turing universal are provided by~\cite{Koiran1994, Graca2005, Cardona2021}; for an overview of `fluid computers', see~\cite{Cardona2024}.

Concerning~\ref{itm: notion of universality approximation}, the differential analyzer---a revolutionary analog computer first built in 1931---was mathematically analyzed to precisely compute the solutions of algebraic differential equations~\cite{Shannon1941,PourEl1974}. Later, \cite{Rubel1981} provides ``an analogue, for analog computers, of the Universal Turing Machine''~(p.~345), in the form of a concrete algebraic differential equation that is approximation universal in the sense that, for every continuous function $f$ on the real numbers and precision $\epsilon$, there is a smooth solution $y$ of that differential equation such that $y$ is $\epsilon$-close to $f$. Excitingly, \cite{Bournez2020} recently improved this to polynomial ordinary differential equations (which always have a unique and analytic solution).
For neural networks, the universal approximation theorems~\cite{Cybenko1989, Hornik1991} similarly show that the functions realized by neural networks are dense in the set of all continuous functions (see~\cite{Bournez2025} for a recent result connecting to computability theory).

Concerning~\ref{itm: notion of universality embedding}, a general definition of a dynamical system is as a (Borel) action $\alpha : G \times X \to X$ of a (Polish) group $G$ on a (Borel) space $X$. Here $X$ is the state space of the system, $G$ is time, and $\alpha$ the dynamics, so $\alpha(g,x) = g \cdot x$ is the state the system is in after time $g$ when starting in state $x$. In this setting, there is an embedding universal system, i.e., a $G$-action $\beta$ on a space $U$ such that any other $G$-action $\alpha$ on a space $X$ (Borel) embeds into $\beta$~\cite{Kechris2000}.

Concerning~\ref{itm: notion of universality factor}, viewing dynamical systems as \emph{$G$-flows}, i.e., as continuous actions of a topological group $G$ on a compact topological space $X$, it is well-known that, among the minimal $G$-flows (those where every orbit $\{ g \cdot x : g \in G \}$ is dense in $X$) there is a factor universal one, which is even unique up to isomorphism (mentioned, e.g., in~\cite[1]{Kechris2005}).
Viewing dynamical systems as linear operators $T : X \to X$ on a Banach space $X$, there is a bounded such operator $U$ such that every other such operator with norm~1 is a linear factor of it~\cite{Darji2017}, which recently was generalized to many other categories by~\cite{Balcerzak2023} (also see~\cite{Kubis2023}).
Coming back to cellular automata, \cite{Hochmanms}~asks if there are factor-universal cellular automata (noting the existence of universal effective $\mathbb{Z}$-dynamical systems~\cite{Hochman2009}).

Concerning further related work, for a structurally similar discussion of universality of spin models, see~\cite{DelasCuevas2016}, and for a category-theoretic framework for universality that includes spin models and Turing machines, see~\cite{Gonda2024}.
Moreover, our main tool, namely the \Fraisse{}-limit, has, beyond extensive use in model theory~\cite[ch.~7]{Hodges1993}, also applications in the following fields: 
domain theory~\cite{Droste1993, Droste2007},
set theory~\cite{Kubis2014},
topos theory~\cite{Caramello2014}, 
graph theory~\cite{Irwin2006, Camerlo2010, Geschke2022}, and
dynamical systems theory~\cite{Bernardes2012, Kwiatkowska2012}---and see~\cite{Macpherson2011} for a more detailed overview. 
A stunning connection between these fields is provided by the KPT~correspondence~\cite{Kechris2005}: If $A$ is the \Fraisse{}-limit of a \Fraisse{}-order class $\mathcal{K}$, then $\mathcal{K}$ has a combinatorial property known as the Ramsey property iff the automorphism group of $A$ is extremely amenable, i.e., every continuous action of this group on a compact Hausdorff space has a fixed point. This can also be used to construct universal flows for automorphism groups of \Fraisse{} limits. For a continuation of this line of research, see, e.g.,~\cite{Lupini2018, Masulovic2021, Bartos2024, NguyenVanThe2014, Bartosova2013, Kechris2014, Akin2016}.

Moreover, related to the question of universality is the question of genericity: Given a space $X$, we can ask if there is a \emph{generic} system on $X$, i.e., a dynamics $T : X \to X$ such that the conjugacy class of $(X,T)$, i.e., the set of all systems $(X,S)$ that are isomorphic to $(X,T)$, forms a comeager subset of the space of all dynamics on $X$, appropriately topologized. A result of~\cite{Kechris2006} states that this is true when $X$ is the Cantor space and `dynamics' means homeomorphism on $X$ (\cite{Glasner2001} previously established density). This is further researched by~\cite{Hochman2008, Hochman2012, Kwiatkowska2012, Doucha2024, Doucha2025}.

Building on this rich body of work, our application of \Fraisse{}-limits is, to the best of our knowledge, new in constructing universal dynamical systems---motivated by the search for universal analog computation---in a way that unifies the case of nondeterministic systems and deterministic systems. Methodologically, we connect to the tools of theoretical computer science by using a category-theoretic version of the \Fraisse{}-limits and exploring links to domain theory and coalgebra.

The closest existing results in the nondeterministic case is the work cited above on the projectively universal homogeneous (undirected) graph~\cite{Irwin2006, Camerlo2010, Geschke2022} (as an instance of the category-theoretic \Fraisse{} limit~\cite{Droste1993, Kubis2014}). Indeed, our nondeterministic systems can be seen as directed graphs. Although ``some ideas [for undirected graphs] can be easily adapted to the more general case of directed graphs''~\cite[3]{Geschke2022}, we still include proofs, since the directed setting is conceptually central to dynamical systems and we reuse some results for the  deterministic systems.

The related work in the deterministic case is the literature cited above on KPT ~correspondence and genericity. But the focus is less on a universal system for actions of $\mathbb{N}$, but more on actions of automorphism groups of \Fraisse{} limits and understanding their universal minimal flow and their genericity. Moreover, we add category-theoretic tools to this investigation. For example, we corroborate the important role of shifts in dynamical systems theory by showing that they are the category-theoretically finite objects, and we define the category of, e.g., sofic $\omega$-proshifts as an instance of an important category-theoretic construction (essentially that of a pro-category).
Thus, we add to a growing body of literature of using category theory for dynamical systems~\cite{Lawvere1986, Niefield1996, Behrisch2017, Schultz2020}.

\section{Categories of dynamical systems}
\label{sec: categories of dynamical systems}

We define the categories of dynamical systems that we will work with: subsection~\ref{ssec: objects dynamical systems} defines the objects and subsection~\ref{ssec: morphisms embedding factor pairs} the morphisms.

\subsection{Objects: Dynamical systems}
\label{ssec: objects dynamical systems}

The dynamical systems that we consider (to represent analog computers) are time-discrete, state-continuous, and possibly nondeterministic dynamical systems. We first state the formal definition, and then we motivate this choice and provide examples.

Nondeterminism means that a state might not have a unique next state, but that there is a \emph{set} of potential next states. Hence the dynamics is a multifunction (i.e., set-valued function). So we first recall the necessary definitions from set-valued analysis~\cite[ch.~17]{Aliprantis2006}. 

A \emph{multifunction} $F : X \rightrightarrows Y$ is a function that maps each $x \in X$ to a subset $F(x) \subseteq Y$. We define
\begin{align*}
x \xrightarrow{F} y
:\Leftrightarrow
y \in F(x).
\end{align*}
We call $F$ \emph{total} if each $F(x)$ is nonempty. If $A \subseteq X$, the \emph{image} of $A$ under $F$ is 
$F[A] := \bigcup_{x \in A} F(x)$.
Further, we call $F$ \emph{partition-injective} if $\{ F(x) : x \in X \}$ is a partition of $Y$ (i.e., for all $x \neq x'$ in $X$, we have $F(x) \cap F(x') = \emptyset$ and for all $y \in Y$, there is $x \in X$ with $y \in F(x)$). We identify a function $f : X \to Y$ with the multifunction $X \rightrightarrows Y$ that maps $x$ to $\{ f(x) \}$, and if a multifunction $F: X \rightrightarrows Y$ has only singletons as values, we identify it with the function $X \to Y$ mapping $x$ to the single element of $F(x)$.

If $X$ and $Y$ are topological spaces, $F$ is \emph{closed-valued} if each $F(x)$ is a closed subset of $Y$. 
The appropriate notion of continuity for us is that $F$ is \emph{upper-hemicontinuous} if, for all $x \in X$ and open $V \subseteq Y$ with $F(x) \subseteq V$, there is an open $U \subseteq X$ with $x \in U$ and, for all $x' \in U$, we have $F(x') \subseteq V$. If $F$ is a function, it is continuous in the usual sense iff it is upper-hemicontinuous.
The \emph{closed graph theorem}~\cite[thm.~17.11]{Aliprantis2006} says: If $F : X \rightrightarrows Y$ is a multifunction between two compact Hausdorff spaces, then $F$ is upper-hemicontinuous and closed-valued iff the \emph{graph} of $F$, i.e., the set $\Gr(F) := \{ (x,y) \in X \times Y : y \in F(x) \}$, is closed. A corollary is that then the image $F[A]$ of a closed set $A \subseteq X$ is closed.

Now we can define the notion of dynamical system that we will use.

\begin{definition}
\label{def: dynamical system}
By a \emph{dynamical system} we mean a pair $(X,T)$ where 
\begin{itemize}
\item
$X$ is a zero-dimensional, second-countable, compact Hausdorff space.\footnote{Recall that a topological space $X$ is zero-dimensional if it has a basis of clopen (i.e., closed and open) sets. And it is second-countable if it has a countable basis.
Note that 
$X$ is a zero-dimensional, second-countable, compact Hausdorff space
iff 
$X$ is a compact and zero-dimensional Polish space
iff 
$X$ is a compact and zero-dimensional metrizable space
iff
$X$ is a second-countable Stone space.
}

\item
$T : X \rightrightarrows X$ is a closed-valued and upper-hemicontinuous multifunction.\footnote{Equivalently, by the closed graph theorem, the graph of $F$ is closed.}
\end{itemize}
We call a dynamical system $(X,T)$: 
\begin{itemize}
\item
\emph{nonempty} if $X$ is nonempty.
\item
\emph{nontrivial} if the graph of $T$ is nonempty (i.e., there are $x,y \in X$ with $x \xrightarrow{T} y$),
\item
\emph{total} if $T$ is total (i.e., for every $x \in X$, there is $y \in X$ with $x \xrightarrow{T} y$),
\item
\emph{deterministic} if $T$ is a function (i.e., for every $x \in X$, there is exactly one $y \in X$ with $x \xrightarrow{T} y$).
\end{itemize}
Often, we just say `system' instead of `dynamical system'. We sometimes speak of a `nondeterministic system' to stress that the system under consideration need not be deterministic.
\end{definition}

(Note that a nontrivial dynamical system is nonempty; the converse holds for total systems, and hence also for deterministic systems, but it may fail for non-total systems since states may have no successor.)

To motivate this choice, topological dynamics is a well-established field studying dynamical systems as structures $(X,T)$ where $X$ is a Hausdorff space (typically compact and often also metrizable) and $T : X \to X$ is a continuous function~\cite{Vries2014}. (This was pioneered by Poincar\'{e} studying differential equations also in the absence of analytic solutions.) 
Studying zero-dimensional compact topological systems is an important subfield~\cite{Downarowicz2016}. This is not just because it is the setting of symbolic dynamics~\cite{Lind1995} but also because classical results like the Jewett--Krieger theorem show that the systems in ergodic theory (i.e., dynamics on probability spaces that preserve the measure) have zero-dimensional topological systems as counterparts.

Moreover, as remark~\ref{rmk: zerodimensionalize and compactify} in appendix~\ref{app: proofs from section categories of dynamical systems} makes precise, we can also canonically turn the more general setting of a dynamics on a Polish space---like $\mathbb{R}^n$---into our setting. Specifically, if $X$ is a Polish space and $T:X \to X$ is a continuous function, then there is a---in a sense best possible---way to extend this into a deterministic system $(Y,S)$ in our sense. In particular, there is an injective function $\eta : X \to Y$ that is continuous with respect to a finer Polish topology on $X$ and which is equivariant, i.e., $\eta \circ T = S \circ \eta$.

Here we generalize topological systems by also allowing nondeterministic dynamics. For simplicity, we still work with discrete time, i.e., $T$ describes the \emph{next} states. Future work should generalize our results to continuous time (as used in continuous time analog computation) and to group or monoid actions more generally.

To illustrate our definition, we consider, in the remainder of this section, some examples that highlight our computational perspective on dynamical systems.

\begin{example}[Shifts]
\label{exm: symbolic dynamics}
Symbolic dynamics is an important subfield of dynamical systems theory. It developed from studying physical dynamical systems by discretizing both space and time, but it also found applications in storing and transmitting digital data~\cite{Lind1995}. The objects of study in symbolic dynamics are shifts, which are defined as follows. Let $A$ be a finite set, called \emph{alphabet}. The (one-sided) \emph{full shift} over $A$ is the system $(A^\omega , \sigma)$ where $A^\omega$ carries the product topology of the discrete topology on $A$ and $\sigma$ is the shift map, i.e., $\sigma (x)_n = x_{n+1}$. A \emph{shift} (aka \emph{shift space}) is a closed, shift-invariant subset $X$ of a full shift on some alphabet. Hence, if $X$ is nonempty, $(X,\sigma)$ is in $\detSysef$. 

The two most studied classes of shifts are the following. The \emph{shifts of finite type} are those shifts $X$ described by a finite list of forbidden words, i.e., $X$ is the set of those sequences $x$ over the alphabet $A$ of $X$ that do not contain a word $w$ from a finite set $F$ of words over $A$. Equivalently, shifts of finite type are those shifts whose sequences arise as the sequences of edges of infinite paths in a finite graph.
The \emph{sofic shifts} are those shifts that are factors of shifts of finite type. Equivalently, sofic shifts are those shifts whose sequences arise as the sequences of labels of infinite paths in a finite graph whose edges are labeled. This makes sofic shifts analogous to regular languages in automata theory.
\end{example}

\begin{example}[Cellular automata]
\label{exm: cellular automaton}
A cellular automaton is a deterministic system $(\Sigma^{\mathbb{Z}^n} , G )$ in the above sense. Here $\Sigma$ is the nonempty set of the finitely many states that a single cell can be in, so an element $c \in \Sigma^{\mathbb{Z}^n}$ describes the global configuration of the cellular automaton, i.e., the state of each cell in the $n$-dimensional grid $\mathbb{Z}^n$ of cells. The topology on $\Sigma^{\mathbb{Z}^n}$ is the product topology (where $\Sigma$ has the discrete topology). The function $G$ results from updating a global configuration according to the local updating rule. The local character renders $G$ continuous.
 
For example, in dimension $n = 1$ with binary states $\Sigma = \{ 0,1\}$, the well-known rule ``110'' says that a cell changes from $0$ to $1$ precisely if the cell to its right is $1$ (``life spreads left'), and a cell changes from $1$ to $0$ precisely if both the cell to the left and to the right is $1$ (``overpopulation''). \textcite{Cook2004} shows that this simple rule yields a Turing universal cellular automaton.
\end{example}

\begin{example}[Differential equations]
\label{exm: differential equation}
Differential analyzers compute by solving (ordinary) differential equations---coming, e.g., from engineering~\cite{Bournez2021, Ulmann2013}. As usual~\cite[sec.~6.2]{Teschl2012}, such solutions are, if globally defined, dynamical systems $\Phi : \mathbb{R} \times X \to X$ with $X \subseteq \mathbb{R}^n$ open; or in discrete time steps, $T := \Phi (1, \cdot ) : X \to X$. As described by remark~\ref{rmk: zerodimensionalize and compactify}, we can view this dynamics on the Polish space $X$ in the larger, compact and zero-dimensional space $Y$, rendering it a deterministic system in our formal sense.
\end{example}

\begin{example}[Training dynamics]
\label{exm: training dynamics}
One may view the training process of a neural network as computing weights for the neural networks with which it solves its task well. The training dynamics $T$ takes the current weights $w$ of the network and a batch of the training data, and then produces (using backpropagation) updated weights $w'$. Formally, the dynamics is given by stochastic gradient descent, so the choice of batch is not deterministic, hence there are several possible next states $w'$. Thus, the training dynamics $T : W \rightrightarrows W$ is a multifunction on the weight space $W$. Understanding this dynamics is an important problem in the theory of machine learning~\cite{Saxe2014, Jacot2018}.
\end{example}

\begin{example}[Reservoir computing]
\label{exm: reservoir computing}
Reservoir computing is one prominent approach to computing on non-digital hardware. The basic idea is that inputs are mapped into a high-dimensional state space of a nonlinear dynamical system---called the \emph{reservoir}---and only a simple read-out function is trained to map from the high-dimensional state space to outputs~\cite{Tanaka2019}. This was initiated independently by \cite{Jaeger2001} and \cite{Maass2002}, with a rich history since. For the purpose of this example, let us consider the first approach.\footnote{In the case where there is no feedback from the output back to the reservoir.} The reservoir is a recurrent neural network with weight matrix $W$ whose state, at time $t \in \{0,1,2,\ldots\}$, is the vector $x(t)$ describing the activation value of each neuron. Additionally, there are input neurons whose state, at time $t$, is the vector $u(t)$ describing the value of each input neuron. (So the input can change over time, i.e., is sequential.) A matrix $W^\text{in}$ maps input neuron states into states of the reservoir. The state of the reservoir at the next time step is computed by
\begin{align}
\label{eqn: reservoir computing}
x(t + 1) := f \big( W^\text{in} u(t+1) + W x(t) \big),
\end{align}
where $f$ is the function that applies to each neuron its activation function (e.g., a sigmoid). Moreover, there are output neurons whose activations, at time $t$, are given by the vector $y(t) = W^\text{out} x(t)$. Only the weight matrix $W^\text{out}$ is updated during training.
If the input sequence $u$ is not fixed, we can view this as a nondeterministic system $T : X \rightrightarrows X$ on the state space $X$ of the reservoir system, with $T(x)$ being the set of next states realized by some input vector as in~\ref{eqn: reservoir computing}.
\end{example}

\subsection{Morphisms: Embedding-factor pairs}
\label{ssec: morphisms embedding factor pairs}

What should be the morphisms $\varphi : (X,T) \to (Y,S)$ between dynamical systems? For deterministic systems, these are the continuous functions $\varphi : X \to Y$ that are \emph{equivariant}, i.e., the following diagram commutes:
\begin{equation*}
\begin{tikzcd}
X 
\arrow[r,"\varphi"]
\arrow[d,swap,"T"]
&
Y
\arrow[d,"S"]
\\
X 
\arrow[r,"\varphi"]
&
Y
\end{tikzcd}
\end{equation*}
How to generalize this to nondeterministic systems? Since we allow multifunctions for the dynamics, it is natural to also allow them for the morphisms. A natural way to generalize equivariance is to require that any step in the $T$-dynamics can be matched, via $\varphi$, to a step in the $S$-dynamics, and vice versa. For embeddings and factors we require an even closer match.

\begin{figure}[t]
\begin{subfigure}{.24\linewidth}
\centering
\begin{tikzcd}
x'
\arrow[r,"\varphi", dashed] 
&
y'
\\
x
\arrow[r,"\varphi", dashed] 
\arrow[u,"T"] 
&
y
\arrow[u,"S",swap,dashed] 
\end{tikzcd}
\caption{Forth}
\label{fig: equivariance conditions hom forth}
\end{subfigure}
\begin{subfigure}{.24\linewidth}
\centering
\begin{tikzcd}
x'
\arrow[r,"\varphi", dashed] 
&
y'
\\
x
\arrow[r,"\varphi", dashed] 
\arrow[u,"T", dashed] 
&
y
\arrow[u,"S",swap] 
\end{tikzcd}
\caption{Back}
\label{fig: equivariance conditions hom back}
\end{subfigure}
\begin{subfigure}{.24\linewidth}
\centering
\begin{tikzcd}
x'
\arrow[r,"\varphi", dashed] 
&
y'
\\
x
\arrow[r,"\varphi"] 
\arrow[u,"T"] 
&
y
\arrow[u,"S",swap, dashed] 
\end{tikzcd}
\caption{Factor}
\label{fig: equivariance conditions factor}
\end{subfigure}
\begin{subfigure}{.24\linewidth}
\centering
\begin{tikzcd}
x'
\arrow[r,"\varphi", dashed] 
&
y'
\\
x
\arrow[r,"\varphi"] 
\arrow[u,"T",dashed] 
&
y
\arrow[u,"S",swap] 
\end{tikzcd}
\caption{Embedding}
\label{fig: equivariance conditions embedding}
\end{subfigure}
\caption{Nondeterministic equivariance: ways of requiring that the $T$-dynamics is matched, via $\varphi$, to the $S$-dynamics, and vice versa. Solid arrows are assumed relations, dashed arrows are relations required to exist.}
\label{fig: equivariance conditions}
\end{figure}

\begin{definition}
\label{def: system morphism}
Let $(X,T)$ and $(Y,S)$ be two dynamical systems.
A \emph{system morphism} (or just \emph{morphism}) $\varphi : (X,T) \to (Y,S)$ is a closed-valued and upper-hemicontinuous multifunction $\varphi : X \rightrightarrows Y$ such that
\begin{enumerate}
\item
\label{itm: nondet equivariance hom forth}
for all $x,x' \in X$, if $x \xrightarrow{T} x'$, there is $y,y' \in Y$ such that 
$x \xrightarrow{\varphi} y$,
$x' \xrightarrow{\varphi} y'$, and
$y \xrightarrow{S} y'$
(figure~\ref{fig: equivariance conditions hom forth}).

\item
\label{itm: nondet equivariance hom back}
for all $y,y' \in Y$, if $y \xrightarrow{S} y'$, there is $x,x' \in X$ such that 
$x \xrightarrow{\varphi} y$,
$x' \xrightarrow{\varphi} y'$, and
$x \xrightarrow{T} x'$
(figure~\ref{fig: equivariance conditions hom back}).
\end{enumerate}
A morphism $\varphi$ is a \emph{factor} if it is a surjective function with 
\begin{enumerate}[resume]
\item
\label{itm: nondet equivariance factor}
for all $x,x' \in X$ and $y \in Y$, if $x \xrightarrow{\varphi} y$ and $x \xrightarrow{T} x'$, there is $y' \in Y$ such that 
$y \xrightarrow{S} y'$ and $x' \xrightarrow{\varphi} y'$ (figure~\ref{fig: equivariance conditions factor}).
\end{enumerate}
A morphism $\varphi$ is an \emph{embedding} if it is total and partition-injective with
\begin{enumerate}[resume]
\item
\label{itm: nondet equivariance embedding}
for all $x \in X$ and $y, y' \in Y$, if $x \xrightarrow{\varphi} y$ and $y \xrightarrow{S} y'$, there is $x' \in X$ such that $x \xrightarrow{T} x'$ and
$x' \xrightarrow{\varphi} y'$ (figure~\ref{fig: equivariance conditions embedding}).
\end{enumerate}
\end{definition}

In the definition of a factor, clause~\ref{itm: nondet equivariance hom forth} is superfluous; and, for an embedding, clause~\ref{itm: nondet equivariance hom back} is superfluous. 
Also, it is easily shown that $\varphi : (X,T) \to (Y,S)$ is a factor iff $\varphi : X \to Y$ is a continuous surjection such that, for all $y,y' \in Y$, we have
\begin{align*}
y \xrightarrow{S} y' 
\text{ iff } 
\exists x,x' \in X : \varphi(x) = y , \varphi(x') = y' , \text{ and } x \xrightarrow{T} x',
\end{align*}
which is the definition of a factor in~\cite{Irwin2006}. 
Further, this notion of factor indeed generalizes that from deterministic systems: 
If $(X,T)$ and $(Y,S)$ are deterministic systems, then $\varphi : (X,T) \to (Y,S)$ is a factor iff $\varphi : X \to Y$ is a continuous and equivariant surjection.
Moreover, our morphisms are closely related to the well-known notion of a \emph{bisimulation}, which is a relation with both property~\ref{itm: nondet equivariance factor} and~\ref{itm: nondet equivariance embedding}.

Composition of morphisms is the usual composition of relations.
(The proof is in appendix~\ref{app: proofs from section categories of dynamical systems}.) 

\begin{proposition}
\label{prop: composition of system morphisms}
If $\varphi : (X,T) \to (Y,S)$ and $\psi : (Y,S) \to (Z,R)$ are morphisms (resp.\ factors or embeddings), then their composition
\begin{align*}
\psi \circ \varphi :(X,T) &\to (Z,R) 
&
x 
&\mapsto 
\psi[\varphi(x)]
\end{align*}
is again a morphism (resp.\ factor or embedding). The composition $\circ$ is associative and its unit is the identity function.
\end{proposition}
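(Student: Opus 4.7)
The plan is to first verify that $\psi \circ \varphi$ is again a closed-valued and upper-hemicontinuous multifunction, and then to check that each of the four diagrammatic conditions is preserved by composition (handling the factor and embedding clauses separately). Associativity and the identity laws then reduce to the well-known corresponding facts for the composition of binary relations.

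First I would argue topologically. For any $x \in X$, the set $\varphi(x) \subseteq Y$ is closed, and $Y$ is compact Hausdorff, so $\varphi(x)$ is compact. Using the closed graph theorem together with the remark in the excerpt that the image of a closed set under a closed-valued, upper-hemicontinuous multifunction between compact Hausdorff spaces is closed, I would conclude that $(\psi\circ\varphi)(x) = \psi[\varphi(x)]$ is a closed subset of $Z$. For upper-hemicontinuity, given an open $V \subseteq Z$ with $\psi[\varphi(x)] \subseteq V$, I would first apply upper-hemicontinuity of $\psi$ pointwise on the compact set $\varphi(x)$ and extract, by compactness, a single open neighborhood $W \subseteq Y$ of $\varphi(x)$ with $\psi[W] \subseteq V$; then upper-hemicontinuity of $\varphi$ yields an open $U \ni x$ with $\varphi[U] \subseteq W$, so $(\psi\circ\varphi)[U] \subseteq V$.

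Next I would check the forth and back conditions by straightforward diagram chasing. For forth: given $x \xrightarrow{T} x'$, apply forth for $\varphi$ to obtain $y, y' \in Y$ with $x \xrightarrow{\varphi} y$, $x' \xrightarrow{\varphi} y'$, $y \xrightarrow{S} y'$; then apply forth for $\psi$ to $y \xrightarrow{S} y'$ to obtain $z, z' \in Z$ with $y \xrightarrow{\psi} z$, $y' \xrightarrow{\psi} z'$, $z \xrightarrow{R} z'$, and note $x \xrightarrow{\psi\circ\varphi} z$, $x' \xrightarrow{\psi\circ\varphi} z'$. The back condition is symmetric. For the factor case, both $\varphi$ and $\psi$ are continuous surjective functions, so $\psi\circ\varphi$ is too; and condition~\ref{itm: nondet equivariance factor} is preserved by two successive applications of that condition (once for $\varphi$, once for $\psi$), using that $\varphi(x) = y$ is a single value. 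The embedding case is analogous, with condition~\ref{itm: nondet equivariance embedding} chained in the opposite order; totality of $\psi\circ\varphi$ is immediate, and partition-injectivity follows because $\{\varphi(x): x \in X\}$ partitions $Y$ and $\{\psi(y): y \in Y\}$ partitions $Z$, so the images $\psi[\varphi(x)]$, being disjoint unions of cells of the latter indexed by disjoint subsets of $Y$, partition $Z$.

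The main obstacle I expect is the upper-hemicontinuity argument sketched above; in particular, passing from pointwise neighborhoods for $\psi$ on $\varphi(x)$ to a single uniform neighborhood $W$ requires compactness of $\varphi(x)$, which is precisely where the closed-valued hypothesis combined with compactness of $Y$ is used. Once that is in place, associativity of $\circ$ and the identity laws are inherited from the fact that multifunction composition agrees with composition of the associated relations (equivalently, of the graphs), and the identity function on $X$ trivially satisfies all four clauses of Definition~\ref{def: system morphism}.
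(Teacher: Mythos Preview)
Your proposal is correct and follows essentially the same approach as the paper: verify closed-valuedness and upper-hemicontinuity of the composite, then chase the four diagrams, and finally appeal to associativity of relation composition. The only cosmetic difference is that the paper outsources the topological step to a short lemma and cites \cite[thm.~17.23]{Aliprantis2006} for the fact that upper-hemicontinuity is preserved under composition, whereas you unfold the standard compactness argument for this directly; your version is exactly the proof of that cited result.
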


Our aim is to find a system $(U,T)$ that is, at least, embedding and factor universal. So for any other system $(Y,S)$, we want both an embedding $e : (Y,S) \to (U,T)$ and a factor $f : (U,T) \to (Y,S)$. Moreover, they should correspond to each other: 
\begin{enumerate}
\item
if one first embeds up and then factors down again, one arrives at the state one started with, and 
\item
if one first factors down and then embeds up, one might obtain several states (since the universal system is richer) but the state one started with should be among them. 
\end{enumerate}
We will then speak of an embedding-factor pair, in analogy to the embedding-projection pairs in domain theory~\cite{Abramsky1994}.

\begin{definition}
\label{def: embedding-factor pair}
An \emph{embedding-factor pair} (\emph{ef-pair} for short) from system $(X,T)$ to system $(Y,S)$ is a pair $(e,f)$ of an embedding $e :  (X,T) \to (Y,S)$ and a factor $f :  (Y,S) \to (X,T)$ such that 
\begin{enumerate}
\item
\label{def: embedding-factor pair 1 embed then factor is identity}
for all $x \in X$, $f \circ e (x) =  \{ x \}$, and 
\item
\label{def: embedding-factor pair 2 factor then embed is subset}
for all $y \in Y$, $y \in e \circ f (y)$.
\end{enumerate}
If $(e,f) :  (X,T) \to (Y,S)$ and $(e',f') :  (Y,S) \to (Z,R)$ are ef-pairs, their composition is
$(e',f') \circ (e,f) := (e' \circ e , f \circ f')  :  (X,T) \to (Z,R)$, which again is an ef-pair. 
\end{definition}

\begin{remark}[Simulation and abstraction]
\label{rmk: simulation and abstraction}
The intuition behind an ef-pair $(e,f) : (X,T) \to (Y,S)$ is that the factor $f : Y \to X$ \emph{abstracts} the `detailed' states $y$ in $Y$ into `coarser' states $x$ in $X$, and the embedding $e : X \to Y$ \emph{simulates} states $x$ in $X$ by states $e(x)$ in $Y$ (cf.\ \cite{Winskel1995}).  
Several states $y$ (namely those $f^{-1} (x)$) can be identified under the abstraction into a single state $x$. And several states $y$ (namely those in $e(x)$) can simulate the state $x$: if there is a dynamic step $x \xrightarrow{T} x'$ in $X$, there are states $y$ and $y'$ simulating $x$ and $x'$, respectively, and $y \xrightarrow{S} y'$.
\end{remark}

In an ef-pair, each of the two elements determines the other. This is analogous to embedding-projection pairs in domain theory forming an order adjunction. Thus, by considering ef-pairs, we, in sense, `just' consider factors (or `just' embeddings). However, like for embedding-projection pairs in domain theory, we will see that it is conceptually helpful to make both directions explicit (see remark~\ref{rmk: limit vs colimit} below). (The proof is in appendix~\ref{app: proofs from section categories of dynamical systems}.)  

\begin{proposition}
\label{prop: ef pairs determine each other}
Let $(X,T)$ and $(Y,S)$ be two systems. 
\begin{enumerate}
\item
\label{prop: ef pairs determine each other 1}
If $f :  (Y,S) \to (X,T)$ is a factor, there is a unique embedding $\underline{f} : (X,T) \to (Y,S)$ such that $(\underline{f},f)$ is an ef-pair. It is given by $\underline{f}(x) := f^{-1} (x)$.

\item
\label{prop: ef pairs determine each other 2}
If $e :  (X,T) \to (Y,S)$ is an embedding, there is a unique factor $\overline{e} : (Y,S) \to (X,T)$ such that $(e,\overline{e})$ is an ef-pair. It is given by $\overline{e} (y) =$~the $x \in X$ with $y \in e(x)$.
\end{enumerate}
\end{proposition}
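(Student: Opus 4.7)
The plan is to verify each of the two items by first checking that the given formula defines a morphism of the claimed type, then verifying the two ef-pair conditions, and finally deriving uniqueness directly from those conditions together with the partition/fiber structure of embeddings and factors.

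For~\ref{prop: ef pairs determine each other 1}, set $\underline{f}(x) := f^{-1}(x)$. Non-emptiness follows from surjectivity of $f$, and closed-valuedness from continuity of $f$ together with $X$ being Hausdorff. The fibers of a surjective function form a partition, so $\underline{f}$ is total and partition-injective. For upper-hemicontinuity I would appeal to the closed graph theorem: the graph of $\underline{f}$ is the image of $\Gr(f)$ under coordinate swap, hence closed in $X\times Y$. The dynamical conditions are immediate consequences of the factor characterisation recalled after Definition~\ref{def: system morphism}: the ``forth'' clause (condition~\ref{itm: nondet equivariance hom forth}) is exactly the right-to-left direction of $x\xrightarrow{T}x' \Leftrightarrow \exists y,y': f(y)=x, f(y')=x', y\xrightarrow{S}y'$, while the embedding clause (condition~\ref{itm: nondet equivariance embedding}) reduces, via setting $x':=f(y')$, to the left-to-right direction (i.e.\ $S$-edges descend along $f$). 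The ef-pair conditions then follow because $f$ is a function: $f[f^{-1}(x)] = \{x\}$ since $f$ is surjective, and $y \in f^{-1}(f(y))$ trivially.

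For~\ref{prop: ef pairs determine each other 2}, since $e$ is total and partition-injective, every $y \in Y$ lies in a \emph{unique} $e(x)$, so $\overline{e}(y)$ is well-defined and $\overline{e}$ is automatically surjective (using totality). For continuity, observe that $\overline{e}^{-1}(U) = e[U]$ for any $U \subseteq X$, again by partition-injectivity. When $U$ is open, $X\setminus U$ is closed, hence compact; by the corollary to the closed graph theorem recalled in section~\ref{sec: dynamical systems}, $e[X\setminus U]$ is closed, and one more use of partition-injectivity gives $e[U] = Y\setminus e[X\setminus U]$, which is therefore open. To verify the factor property using the characterisation from section~\ref{sec: categories of dynamical systems}, the forward direction uses clause~\ref{itm: nondet equivariance hom forth} of $e$; the backward direction uses clause~\ref{itm: nondet equivariance embedding}: given $y\xrightarrow{S}y'$ with $\overline{e}(y)=x$ and $\overline{e}(y')=x'$, clause~\ref{itm: nondet equivariance embedding} produces some $x''$ with $x\xrightarrow{T}x''$ and $y'\in e(x'')$, and partition-injectivity forces $x''=x'$. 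The ef-pair conditions are then immediate.

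For uniqueness in both parts, the key point is that each ef-pair condition pins down one map from the other. In~\ref{prop: ef pairs determine each other 1}, any embedding $e'$ making $(e',f)$ an ef-pair must satisfy $y\in e'(x) \Rightarrow f(y)=x$ (by condition~\ref{def: embedding-factor pair 1 embed then factor is identity}) and $y\in e'(f(y))$ (by condition~\ref{def: embedding-factor pair 2 factor then embed is subset}), which together force $e'(x)=f^{-1}(x)=\underline{f}(x)$. In~\ref{prop: ef pairs determine each other 2}, any factor $f'$ making $(e,f')$ an ef-pair satisfies $y\in e(f'(y))$ by condition~\ref{def: embedding-factor pair 2 factor then embed is subset}, so by partition-injectivity of $e$ the value $f'(y)$ is forced to equal the unique $x$ with $y\in e(x)$, namely $\overline{e}(y)$. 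The only mildly subtle step is the continuity of $\overline{e}$, for which the partition-injectivity of $e$ together with the compactness-based closure of images is essential; the remaining verifications are bookkeeping.
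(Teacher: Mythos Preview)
Your proof is correct and follows essentially the same approach as the paper: define $\underline{f}=f^{-1}$ and $\overline{e}$ via the partition cells of $e$, verify the embedding/factor axioms from the corresponding properties of $f$ and $e$, use the closed graph theorem (or its corollary on closed images) for the topological conditions, and derive uniqueness directly from the two ef-pair identities combined with partition-injectivity. One harmless slip: in part~\ref{prop: ef pairs determine each other 1} you have the ``left-to-right'' and ``right-to-left'' labels of the factor biconditional swapped (the forth clause for $\underline{f}$ is the implication \emph{from} $x\xrightarrow{T}x'$ \emph{to} the existential, i.e.\ left-to-right), but the content of the argument is unaffected.
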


Now we can define the categories of dynamical systems that we need here. We will focus on nontrivial systems. This is not just because trivial systems are uninteresting from a dynamical point of view. It is also because, by definition, no nontrivial system can factor onto a trivial one. So if we want to find a system that is both embedding and factor universal, we cannot, for that simple reason, include trivial systems.  

\begin{definition}
\label{def: category of systems}
Let $\Sysef$ be the category whose objects are nontrivial dynamical systems $(X,T)$ and whose morphisms are ef-pairs $(e,f) : (X,T) \to (Y,S)$. 
Should we need to talk about all systems, we write $\allSysef$ for the supercategory consisting of all dynamical systems with ef-pairs. 
We are also interested in the full subcategories $\tSysef$ and $\detSysef$ of $\Sysef$ consisting of total and deterministic systems, respectively (which hence are nonempty).
\end{definition}

We could also generalize away from ef-pairs and define, e.g., the category $\allSys$ of dynamical systems with system morphisms, but we do not need that here.
Reassuringly, system isomorphism $(e,f) : (X,T) \to (Y,S)$ in the category-theoretic sense coincides with the usual sense, i.e., $e : X \to Y$ is a homeomorphism with inverse $f$ such that $e \circ T = S \circ e$ (proposition~\ref{prop: isomorphism in Sysef} in appendix~\ref{app: proofs from section categories of dynamical systems}).
The category $\detSysef$ is a common setting for topological dynamics in dimension zero: the objects are pairs $(X,T)$ with $X$ a nonempty, compact, and zero-dimensional Polish space and $T : X \to X$ a continuous function, and the morphisms are factors (ignoring the corresponding embedding), i.e., continuous and equivariant surjections.

\begin{remark}[Limit--colimit coincidence]
\label{rmk: limit vs colimit}
Because of proposition~\ref{prop: ef pairs determine each other}, we could, instead of ef-pairs, equivalently work with only embeddings or only factors. The latter is more natural from a dynamical systems point of view, but the former is more common for \Fraisse{} limits. With ef-pairs, we can capture both perspectives. 
In particular, in $\allSysef$ and any of its full subcategories, a \emph{colimit} via ef-pairs (or just the embeddings in the pairs) is the same thing as a \emph{limit} of the factors in the pairs. This is, again, analogous to the \emph{limit--colimit coincidence} in domain theory~\parencite[sec.~3.3.2]{Abramsky1994}. Thus, depending on one's preferred perspective, one can switch back and forth between the embedding/colimit or the factor/limit terminology.
\end{remark}

\section{Main results and proof technique}
\label{sec: statement of the results}

With this framework of dynamical systems in place, we can now state our main results (section~\ref{ssec: statement of the main results}). Then we describe the general proof technique, namely, \Fraisse{} limits (section~\ref{ssec: proof technique Fraisse limits}). 

\subsection{Statement of the main results}
\label{ssec: statement of the main results}

To connect to existing theories of computation, section~\ref{sec: coalgebra and domain theory} establishes a categorical equivalence between our category $\Sysef$ of systems and what we call the category $\dynAlg$ of dynamic algebraic lattices. 
But our two main results on the existence of universal systems are the following.

To state these results, recall that we are looking for a universal system $(U,T)$, i.e., for any system $(Y,S)$, we need an ef-pair $(e,f) : (Y,S) \to (U,T)$. Moreover, ideally the system $(U,T)$ also is `homogeneous' in the intuitive sense that `every part of it looks the same', so it does not have any `idiosyncrasies'. To formalize this, recall that a topological space $X$ is homogeneous if, for any two points $x,y \in X$, there is a homeomorphism $h : X \to X$ such that $h(x) = y$.\footnote{Equivalently, the group $G$ of hoemeomorphisms of $X$ acts transitively on $X$.} Since points of $X$ are essentially just continuous functions from the one-point space into $X$, we can rephrase this as follows: If $Y$ is the one-point space and $f,g : Y \to X$ are continuous functions, then there is a homeomorphism $h : X \to X$ such that $h \circ g = f$. By demanding this for more spaces $Y$ (e.g., finite spaces), we get a stronger homogeneity.
Formally, then, we define these terms as follows. 

\begin{definition}
\label{def: universal and homogeneous}
Let $\mathsf{C}$ be a category and let $\mathsf{D}$ be a full subcategory. 
An object $U$ in $\mathsf{C}$ is \emph{$\mathsf{D}$-universal}, if, for any object $A$ in $\mathsf{D}$, there is a morphism $f : A \to U$ (see figure~\ref{fig: fraisse definitions universal}).\footnote{We follow~\cite[138]{Droste1993} in this terminology, but \textcite[1757]{Kubis2014} instead speaks of a `cofinal' object, to avoid the category-theoretic connotation that a universal object is canonically determined by a universal property.} 
An object $U$ in $\mathsf{C}$ is \emph{$\mathsf{D}$-homogeneous}, if, for any object $A$ in $\mathsf{D}$ and morphisms $f,g : A \to U$, there is an isomorphism $h : U \to U$ such that $h \circ g = f$ (see figure~\ref{fig: fraisse definitions homogeneous}).  
\end{definition}

Regarding our existence result for nondeterministic systems, write $\Syseffin$ for the full subcategory of $\Sysef$ consisting of those systems with a finite state space (i.e., those for which we want homogeneity). Then the result says:

\begin{theorem}[Universal nondeterministic system]
\label{thm: universal nondeterministic system}
There is an up to isomorphism unique system in $\Sysef$ that is $\Sysef$-universal and $\Syseffin$-homogeneous. 
\end{theorem}

Recall from the end of section~\ref{sec: literature review} that this is a generalization of the result for profinite undirected graphs~\cite{Irwin2006, Camerlo2010, Geschke2022} (more details in remark~\ref{rmk: generalization of universal profinite undirected graph} below). Section~\ref{sec: universal and homogeneous nondeterministic system} discusses further properties of the universal nondeterministic system.

Regarding deterministic systems, there cannot be a universal system, as we discuss in section~\ref{sec: universality for deterministic systems}. The shifts are the basic building blocks of deterministic systems (i.e., every deterministic system is a limit of shifts). But the obstacle to universality is that there are uncountably many shifts. So this suggests a strategy to build universal systems for large subclasses of deterministic systems. 
Namely, choose a countable class $S$ of shifts and consider the full subcategory $\DSef(S)$ of $\detSysef$ consisting of those systems that are limits of shifts in $S$. Recall remark~\ref{rmk: limit vs colimit} that a colimit along ef-pairs is the same thing as a limit along factors. So our formal definition is this:

\begin{definition}
\label{def: DSef}
If $S$ is a collection of shifts, let $\DSef(S)$ be the full subcategory of $\detSysef$ consisting of those objects that are colimits of $\omega$-chains of shifts in $S$. The objects of $\DSef(S)$ we also call the \emph{$\omega$-proshifts} over $S$.  
\end{definition}

As mentioned in example~\ref{exm: symbolic dynamics}, the most studied classes of shifts are the class $\SFT$ of shifts of finite type and the class $\SOF$ of sofic shifts. So they are natural choices for $S$. Surprisingly, they share a universal system, as our existence result for deterministic systems says:

\begin{theorem}[Universal deterministic system]
\label{thm: universal and homogeneous proshifts}
There is a deterministic system $(U,T)$ such that
\begin{enumerate}
\item
$(U,T)$ is the up to isomorphism unique system in $\DSef(\SFT)$ that is $\DSef(\SFT)$-universal and $\SFT$-homogeneous. 

\item
$(U,T)$ is the up to isomorphism unique system in $\DSef(\SOF)$ that is $\DSef(\SOF)$-universal and $\SOF$-homogeneous. 
\end{enumerate}
For brevity, we call $(U,T)$ the \emph{universal proshift}.
\end{theorem}

An important corollary of a result of~\cite{Good2020} is that the universal proshift has the shadowing property (i.e., simulated orbits are close to a true orbit).

\begin{figure}[t]
\begin{subfigure}[b]{.24\linewidth}
\centering
\begin{tikzcd}
U
\\
A
\arrow[u, dashed, "f"]
\end{tikzcd}
\caption{Universal}
\label{fig: fraisse definitions universal}
\end{subfigure}
\begin{subfigure}[b]{.24\linewidth}
\centering
\begin{tikzcd}
U
\arrow[loop above, dashed, "h"]
\\
A
\arrow[u, bend left, "f"]
\arrow[u, bend right, swap, "g"]
\end{tikzcd}
\caption{Homogeneous}
\label{fig: fraisse definitions homogeneous}
\end{subfigure}
\begin{subfigure}[b]{.24\linewidth}
\centering
\begin{tikzcd}[column sep=small]
&
C
&
\\
A
\arrow[ur, dashed, "f"]
&
&
B
\arrow[ul, dashed, swap, "g"]
\end{tikzcd}
\caption{Joint embedding}
\label{fig: fraisse definitions joint embedding}
\end{subfigure}
\begin{subfigure}[b]{.24\linewidth}
\centering
\begin{tikzcd}[column sep=small]
&
C
&
\\
B
\arrow[ur, dashed, "g"]
&
&
B'
\arrow[ul, dashed, swap, "g'"]
\\
&
A
\arrow[ul, "f"]
\arrow[ur, swap, "f'"]
&
\end{tikzcd}
\caption{Amalgamation}
\label{fig: fraisse definitions amalgamation}
\end{subfigure}
\caption{Definitions related to the category-theoretic \Fraisse{} theorem.
Dashed arrows indicate the required existence of a morphism (however, unique existence is \emph{not} required). 
}
\label{fig: fraisse definitions}
\end{figure}

\subsection{Proof technique: \Fraisse{} limits}
\label{ssec: proof technique Fraisse limits}

To prove our universality results, we will use the \Fraisse{} theorem (aka \Fraisse{}--\Jonsson{} theorem) from model theory in its category-theoretic generalization due to \textcite[thm.~1.1]{Droste1993}. (This is further discussed and generalized by~\cite{Kubis2014} and~\cite{Caramello2014}.)

We recall the terminology from \cite{Droste1993} to state their version of the \Fraisse{} theorem. 
Let $\mathsf{C}$ be a category. We write $\omega = \{ 0,1,2, \ldots \}$. An \emph{$\omega$-chain} in $\mathsf{C}$ is a structure $(A_i , f_{i,i+1})_{i \in \omega}$ where each $A_i$ is an object in $\mathsf{C}$ and each $f_{i,i+1} : A_i \to A_{i+1}$ is a morphism in $\mathsf{C}$ (we will drop the commas in the subscripts). For $i \leq j$ in $\omega$, define $f_{ij} := f_{j-1 j} \circ \ldots \circ f_{i i+1}$, with $f_{i j} := \id_{A_i}$ if $i = j$. So we have the diagram $(A_i , f_{ij})$ of shape $\omega$ in $\mathsf{C}$. If it has a colimit in $\mathsf{C}$, we denote it $(A,f_i)$ with $f_i : A_i \to A$, and we also call it the colimit of the $\omega$-chain $(A_i , f_{i i+1})$.
An object $B$ of $\mathsf{C}$ is \emph{(category-theoretically) finite} if, for all $\omega$-chains $(A_i , f_{i i+1})_{i \in \omega}$ with colimit $(A,f_i)$, if $g : B \to A$ is a morphism, then there exists $n$ such that, for all $i \geq n$, there is a unique morphism $h : B \to A_i$ such that $g = f_i \circ h$.

\begin{definition}
\label{def: algebroidal category}
A category $\mathsf{C}$ is \emph{semi-algebroidal} if 
\begin{enumerate}
\item
all morphisms in $\mathsf{C}$ are monic,
\item 
every object of $\mathsf{C}$ is a colimit of an $\omega$-chain of finite objects in $\mathsf{C}$, and 
\item
every $\omega$-chain of finite objects in $\mathsf{C}$ has a colimit in $\mathsf{C}$.
\end{enumerate}
Finally, $\mathsf{C}$ is \emph{algebroidal} if it is semi-algebroidal, it contains up to isomorphism only countably many finite objects, and between any two finite objects there are only countably many morphisms.
\end{definition}

The term `algebroidal' traces back, as \cite[138]{Droste1993} note, to \cite{Smyth1982} (in domain theory) and \cite{Banaschewski1976} (in category theory). 
Indeed, \cite[780]{Smyth1982} see algebroidal categories as a generalization of $\omega$-algebraic cpo's, which are a common kind of partial order studied in domain theory.\footnote{We will not discuss cpo's later, but we give here a definition for completeness and to highlight that it is a special case of an algebroidal category. A cpo is a partial order $(P,\leq)$ with a least element in which every increasing $\omega$-chain has a least upper bound (i.e., `colimit'). An element $b \in P$ is $\omega$-compact (i.e., category-theoretically finite) if any increasing $\omega$-chain $(a_n)$ whose least upper bound is $\geq b$ contains an element $a_n$ which is $\geq b$. Finally, $P$ is $\omega$-algebraic if it contains only countably many compact elements, and for any element $a \in P$, there is an increasing $\omega$-chain of compact elements whose least upper bound is $a$. In section~\ref{sec: coalgebra and domain theory}, we give domain-theoretic definitions in (the more common) terms of directed subsets rather than $\omega$-chains, but these are equivalent if there are only countably many compact elements~\cite[sec.~2.2.4]{Abramsky1994}.} Thus, $\omega$-algebraic cpo's---when viewed as a poset category\footnote{The objects are the elements of the partial order $(P,\leq)$ and there is exactly one morphism from $a$ to $b$ if $a \leq b$, with no further morphisms.}---provide a simple example of algebroidal categories. Other common examples also arise in domain theory: many categories of domains form algebroidal categories~\cite{Droste1993}.

\begin{definition}
Let $\mathsf{C}$ be a category. Then $\mathsf{C}$ has the \emph{joint embedding property} if for any objects $A$ and $B$, there is an object $C$ with morphisms $f : A \to C$ and $g : B \to C$ (see figure~\ref{fig: fraisse definitions joint embedding}).
And $\mathsf{C}$ has the \emph{amalgamation property} if for any objects $A$, $B$, and $B'$ with morphisms $f : A \to B$ and $f' : A \to B'$, there is an object $C$ with morphisms $g : B \to C$ and $g' : B' \to C$ such that $g \circ f = g' \circ f'$ (see figure~\ref{fig: fraisse definitions amalgamation}).
\end{definition}

Now the category-theoretic \Fraisse{} theorem says:

\begin{theorem}[\textcite{Droste1993}]
\label{thm: category theoretic fraisse jonsson}
Let $\mathsf{C}$ be an algebroidal category and $\mathsf{C}_\mathsf{fin}$ the full subcategory of category-theoretically finite objects. Then there exists a $\mathsf{C}$-universal and $\mathsf{C}_\mathsf{fin}$-homogeneous object $U$ in $\mathsf{C}$ iff $\mathsf{C}_\mathsf{fin}$ has the joint embedding property and the amalgamation property.
Moreover, if a $\mathsf{C}$-universal and $\mathsf{C}_\mathsf{fin}$-homogeneous object exists in $\mathsf{C}$, then it is unique up to isomorphism.
\end{theorem}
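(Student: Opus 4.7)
The plan is to prove both directions of the equivalence, plus the uniqueness clause, following the classical \Fraisse--\Jonsson{} pattern adapted to algebroidal categories.

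For the forward direction (constructing $U$ from JEP and AP), I would perform a back-and-forth construction entirely inside $\mathsf{C}_{\mathsf{fin}}$. Since $\mathsf{C}$ is algebroidal, there are only countably many finite objects up to isomorphism and countably many morphisms between any two of them, so I can enumerate a countable list of \emph{tasks} of two kinds: universality tasks (ensure a given finite $A$ maps into the chain under construction) and extension tasks (given a morphism $h : A \to B_n$ with $A, A'$ finite and an extension $u : A \to A'$, produce $m \geq n$ and an arrow $A' \to B_m$ extending $h$ along $u$). I inductively build an $\omega$-chain $B_0 \to B_1 \to \cdots$ of finite objects by dovetailing through this list, discharging each universality task via JEP and each extension task via AP, both applied inside $\mathsf{C}_{\mathsf{fin}}$. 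The colimit $U := \mathrm{colim}\, B_n$ exists in $\mathsf{C}$ by semi-algebroidality and is the candidate universal homogeneous object.

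Next, I would verify $U$ has the two advertised properties. Universality on finite objects is immediate from the universality tasks; to upgrade to arbitrary $X$ in $\mathsf{C}$, I would write $X$ as the colimit of an $\omega$-chain of finite objects $X_n$ (semi-algebroidality), lift the $X_n$ into $U$ step by step using the extension property baked into the construction, and pass to the colimit for a mediating morphism $X \to U$. Homogeneity on finite $A$ follows from the classical back-and-forth: given $f, g : A \to U$, use that $A$ is category-theoretically finite to factor both maps through some common $B_{n_0}$, then alternately extend these factorizations along the defining chain of $U$ on the two sides, obtaining compatible partial isomorphisms that assemble in the colimit into an automorphism $h : U \to U$ with $h \circ g = f$.

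For the reverse direction, I would derive JEP and AP from the existence of a $\mathsf{C}$-universal and $\mathsf{C}_{\mathsf{fin}}$-homogeneous $U$. JEP is nearly immediate: given finite $A, B$, embed each into $U$ by universality; since $U$ is the colimit of an $\omega$-chain of finite objects and $A, B$ are category-theoretically finite, both factorings stabilize at some common finite stage $C_k$ of that chain, which is the required joint amalgam. For AP, given $f : A \to B$ and $f' : A \to B'$ in $\mathsf{C}_{\mathsf{fin}}$, first embed $B, B' \hookrightarrow U$ by universality and use $\mathsf{C}_{\mathsf{fin}}$-homogeneity at $A$ to align the two composites $A \to U$; the monicity of every morphism in $\mathsf{C}$, combined with the uniqueness clause in the definition of a category-theoretically finite object, then forces the induced factorizations of $B$ and $B'$ through a common $C_k$ to amalgamate the original square inside $\mathsf{C}_{\mathsf{fin}}$. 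Finally, for uniqueness I would run a categorical back-and-forth between two candidate objects $U$ and $V$, expressing each as a colimit of an $\omega$-chain of finite objects and zig-zagging between the two chains---using universality to extend in each direction and homogeneity to keep the successive choices compatible---so that the mediating morphisms assemble at the colimits into an isomorphism $U \cong V$. The main obstacle I expect is the bookkeeping in the $(\Leftarrow)$ direction (organizing the dovetailing so that every finite diagram relevant for universality and homogeneity of the colimit has actually been handled at some finite stage) and, in the $(\Rightarrow)$ direction, the careful use of monicity together with the uniqueness of finite factorization to push the amalgam from $U$ down to a finite $C_k$.
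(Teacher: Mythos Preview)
The paper does not prove this theorem; it is quoted verbatim as \cite[thm.~1.1]{Droste1993} and used as a black box. So there is no in-paper proof to compare your proposal against.

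That said, your sketch is the standard \Fraisse{}--\Jonsson{} argument adapted to the algebroidal setting and is essentially the route taken in \cite{Droste1993}: build an $\omega$-chain in $\mathsf{C}_{\mathsf{fin}}$ by dovetailing JEP/AP tasks, take its colimit (which exists by semi-algebroidality), and verify universality and homogeneity by lifting along finite approximations and back-and-forth, respectively; conversely, push JEP and AP down from $U$ to a finite stage using category-theoretic finiteness, monicity, and homogeneity; and prove uniqueness by a back-and-forth between two candidates. One point worth tightening in the homogeneity and uniqueness arguments: the back-and-forth relies not just on homogeneity but on the stronger \emph{saturation} property (for finite $A,B$ and morphisms $A\to U$, $A\to B$, there is $B\to U$ making the triangle commute), which is what your ``extension tasks'' actually build into $U$; you should state and use this explicitly rather than appealing to homogeneity alone, since homogeneity by itself gives you an automorphism of $U$, not a lift of $B$ into a finite stage. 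The paper in fact invokes saturation later (in the proof of proposition~\ref{prop: isomorphism for universal proshifts}) for exactly this purpose.
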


Thus, our proof strategy, both for nondeterministic and deterministic systems, will be to show that the relevant category of systems (1) is algebroidal and (2) that the full subcategory of category-theoretically finite systems has the joint embedding property and the amalgamation property.
We will achieve (1) by proving `algebroidality theorems', i.e., theorems providing easily check-able sufficient conditions for a category of systems to be (semi-) algebroidal (theorems~\ref{thm: algebroidal category of systems}, \ref{thm: category of det sys semialgebroidal}, and~\ref{thm: generating algebroidal categories of det sys}).

\section{Connections to coalgebra and domain theory} 
\label{sec: coalgebra and domain theory}

To put things into perspective, we establish connections to the two main areas of research on computation from a general point of view: coalgebra and domain theory. In particular, we make precise the strong analogies to domain theory alluded to in section~\ref{sec: categories of dynamical systems}.

\emph{Coalgebra}.
The field of coalgebra studies the `mathematics of computational dynamics'~\parencite[vii]{Jacobs2016}. From this perspective, we view our nondeterministic systems $(X,T)$ as the coalgebra $T : X \to \F(X)$ that maps each $x \in X$ to the (closed) set $T(x)$ of its successor states. To make this precise, we need to specify the endofunctor $\F$. Since $X$ is a second-countable Stone space, it is natural to work in that category. So we need to put a topology on the collection $\F(X)$ of all closed subsets of $X$. Such topologies are well-studied as hyperspaces~\cite{Bezhanishvili2022}. A standard one is the \emph{Vietoris topology}: it is generated by (with $U \subseteq X$ open)
\begin{align*}
\Box U &:= \{ A \in \F(X) : A \subseteq U \}
&
\Diamond U &:= \{ A \in \F(X) : A \cap U \neq \emptyset \}.
\end{align*}
If $X$ is a non-empty second-countable Stone space, so is $\F(X)$ with this topology; and $\F$ indeed becomes an endofunctor on the category of second-countable Stone spaces by sending a morphism $f : X \to Y$ to the morphism $\F(f) : \F(X) \to \F(Y)$ which maps a closed set $A$ to its image $f[A]$~\cite{Kupke2003}.
However, then $T$ is an $\F$-coalgebra only if it also is lower-hemicontinuous (and not only upper-hemicontinuous), and, further, coalgebra morphisms do not include all our system morphisms.
We get a closer match, if we work only with the \emph{upper Vietoris topology} (aka \emph{miss topology}) which is generated only by the $\Box U$. The downside is that then $\F(X)$ is only a spectral space, not necessarily a Stone space. This brings us to domain theory.

\emph{Domain theory}.
\textcite{Edalat1995} studies dynamical systems via domain theory by considering, among others, the upper Vietoris topology on $\F(X) \setminus \{ \emptyset \}$ which (in our case where $X$ is a second-countable Stone space) is an algebraic domain and $\F(T)$ a Scott-continuous function (definitions below).\footnote{Moreover, the algebraic domain has only countably many compact elements (definition below) and is bounded-complete (any two elements with an upper bound have a least upper bound).}  
\cite{Hornischer2021} provides another way of assigning a dynamical system $(X,T)$ (viewed as a computational process) to a domain $D$ with a continuous $\llbracket T \rrbracket : D \to D$ (viewed as the denotational semantics of that process: a program $\llbracket T \rrbracket$ of type $D$). This approach also incorporates probability measures on $X$. 
Here, we will make precise the analogy between, on the one hand, the dynamical systems that we study in this paper and, on the other hand, domain theory. To do so, we prove an equivalence between the category $\Sysef$ and a `domain-theoretic' category that we call $\dynAlg$.

To state this equivalence, we recall some terminology~\parencite[e.g.][sec.~I-4]{Gierz2003}.
Let $(P,\leq)$ be a partial order (aka poset). For a subset $S \subseteq P$, the least upper bound of $S$ (resp., greatest lower bound), if it exists, is denoted $\bigvee S$ (resp., $\bigwedge S$). A subset $D \subseteq P$ is \emph{directed} if it is nonempty and, for all $a,b \in D$, there is $c \in D$ with $a,b \leq c$. An element $x \in P$ is (order-theoretically) \emph{compact} if for all directed subsets $D \subseteq P$, if $\bigvee D$ exists and $\bigvee D \geq x$, there is $d \in D$ with $d \geq x$. The set of compact elements of $P$ is written $K(P)$. We call $P$ \emph{algebraic} if, for all $x \in P$, the set $\{ c \in K(P) : c \leq x \}$ is directed and its least upper bound is $x$. We call $P$ \emph{directed complete} if every directed subset of $P$ has a least upper bound. An \emph{algebraic domain} is a directed complete partial order that also is algebraic. 
An \emph{algebraic lattice} is an algebraic domain that also is a lattice (i.e., each finite subset has a least upper bound and a greatest lower bound).
A function $f : A \to B$ between directed complete posets is \emph{Scott continuous} if it preserves directed least upper bounds, i.e., for all directed $D \subseteq A$, we have $f ( \bigvee D ) = \bigvee f [D]$.
The category $\Alg$ of algebraic lattices with Scott continuous functions is a category of domains that was considered early on, when \textcite{Scott1970} used it for an ``Outline of a Mathematical Theory of Computation''.\footnote{Later, other categories of domains were considered in which the partial orders need not have a top element---e.g., algebraic domains. But for us, the top element has an intuitive interpretation: If $X$ is a second-countable Stone space, then $\F(X)$, the set of closed subsets of $X$ ordered by reverse inclusion, is an algebraic lattice. For a dynamics $T$ on $X$, the bottom element $X$ and the top element $\emptyset$ play two dual roles, when considered as possible sets of successor states: The bottom element $X$ is one extreme of being nondeterministic (every state can be a successor), and the top element $\emptyset$ is the other extreme (no state can be a successor).}

Here, we are interested in the following subcategory of $\Alg$. Call an element $c$ of an algebraic lattice $A$ a \emph{co-atom} if $c$ is not the greatest element of $A$, but for all $y \in A$, if $c \leq y$, then either $y = c$ or $y$ is the greatest element of $A$.\footnote{The order-theoretically dual notion is that of an \emph{atom}: an element $a \in A$ that is not the least element but for all $y \in A$, if $y \leq a$, then either $y = a$ or $y$ is the least element.} For an element $x \in A$, write $\CoAt(x)$ for the set of all co-atoms $c$ in $A$ with $c \geq x$. A function $f : A \to B$ between algebraic lattices is \emph{co-atomic} if, for all $x \in A$, we have $f(x) = \bigwedge f[\CoAt(x)]$.
Now, we define the subcategory $\bAlgc$ of $\Alg$ as follows: The objects are the algebraic lattices whose compact elements form a countable sublattice that is a Boolean algebra (i.e., distributive and complemented), and the morphisms are the Scott-continuous and co-atomic functions.\footnote{Algebraic lattices whose compact elements form a sublattice are known as arithmetic lattices~\cite[def.~I-4.7]{Gierz2003}. So we might call the objects of $\bAlgc$ `Boolean-arithmetic lattices'.}

Then our domain-theoretic characterization of the category $\Sysef$ of dynamical systems goes as follows.
For a second-countable Stone space $X$, we (still) write $\F(X)$ for the set of closed subsets of $X$ ordered by reverse inclusion, and if $f : X \rightrightarrows Y$ is a closed-valued and upper-hemicontinuous multifunction between Stone spaces, then $\F(f) : \F(X) \to \F(Y)$ maps $A$ to the image $f[A] = \bigcup \{ f(x) : x \in A \}$.

\begin{theorem}
\label{thm: Sef equivalent to domain theoretic category}
The category $\Sysef$ is equivalent to the following category $\dynAlg$ of `dynamic algebraic lattices':
\begin{itemize}
\item
Objects: Pairs $(A,\alpha)$ where $A$ is an object in $\bAlgc$ and $\alpha: A \to A$ is a morphism in $\bAlgc$.
\item
Morphisms: Pairs $(\epsilon,\pi) : (A,\alpha) \to (B,\beta)$ of morphisms $\epsilon : A \to B$ and $\pi : B \to A$ in $\bAlgc$ such that 
\begin{enumerate}
\item
\label{itm: algebraic morphisms 1}
$\pi$ preserves co-atoms (if $b$ is a co-atom in $B$, then $\pi(b)$ is a co-atom in $A$)
\item
\label{itm: algebraic morphisms 2}
$\pi \circ \epsilon = \id_A$
\item
\label{itm: algebraic morphisms 3}
$\epsilon \circ \pi \leq \id_B$
\item
\label{itm: algebraic morphisms 4}
$\pi \circ \beta \circ \epsilon \leq \alpha$
\item
\label{itm: algebraic morphisms 5}
$\alpha \circ \pi \leq \pi \circ \beta$
\end{enumerate}
\end{itemize}
via the functor that sends the system $(X,T)$ to $(\F(X) , \F(T))$ and that sends the ef-pair $(e,f) : (X,T) \to (Y,S)$ to $(\F(e), \F(f))$.
\end{theorem}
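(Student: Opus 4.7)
The statement is essentially Stone duality lifted to dynamical systems. I would build the candidate inverse functor $\G : \dynAlg \to \Sysef$ and exhibit natural isomorphisms $\G\F \cong \id$ and $\F\G \cong \id$.

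\emph{Well-definedness of $\F$.} For an object $(X,T)$, the set $\F(X)$ of closed subsets of $X$ under reverse inclusion is the ideal completion of the countable Boolean algebra $\Clp(X)$: every closed set is the filtered intersection of the clopens containing it, so the order-theoretically compact elements of $\F(X)$ coincide with $\Clp(X)$ and form a countable Boolean sublattice. Since $X$ is Hausdorff, the co-atoms of $\F(X)$ are exactly the singletons. Scott-continuity of $\F(T)$ follows from upper-hemicontinuity together with compactness (the $T$-image of a filtered intersection of closed sets equals the intersection of the $T$-images), while co-atomicity reduces to the fact that $T[A] = \bigcup_{x \in A} T(x)$ is closed, which is the closed-graph corollary recalled in section~\ref{sec: dynamical systems}. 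On a morphism $(e,f)$, the five conditions translate verbatim into statements about direct images: condition~\ref{itm: algebraic morphisms 1} holds because $f$ is an honest function, conditions~\ref{itm: algebraic morphisms 2}--\ref{itm: algebraic morphisms 3} are clauses~\ref{def: embedding-factor pair 1 embed then factor is identity}--\ref{def: embedding-factor pair 2 factor then embed is subset} of definition~\ref{def: embedding-factor pair} applied pointwise, and conditions~\ref{itm: algebraic morphisms 4}--\ref{itm: algebraic morphisms 5} express, respectively, the reverse and forward directions of the factor bi-implication for $f$ (the former combined with clause~\ref{def: embedding-factor pair 2 factor then embed is subset} to bring the domain of $S$ into $e[A]$).

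\emph{Construction of $\G$.} Given $(A,\alpha)$, I set $X := \CoAt(A)$ topologized by the basic clopens $[c] := \{ x \in X : c \leq x \}$ for $c$ in the compact Boolean sublattice of $A$; countability and Stone representation make $X$ a second-countable Stone space. Define $T(x) := \CoAt(\alpha(x))$: closed-valuedness of $T$ is co-atomicity of $\alpha$ translated to points, and upper-hemicontinuity is Scott-continuity of $\alpha$ translated to basic opens. On a morphism $(\epsilon,\pi)$, condition~\ref{itm: algebraic morphisms 1} lets me restrict $\pi$ to co-atoms to obtain a continuous function $f : Y \to X$, and $e(x) := \CoAt(\epsilon(x))$ defines a closed-valued, upper-hemicontinuous multifunction. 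The remaining conditions~\ref{itm: algebraic morphisms 2}--\ref{itm: algebraic morphisms 5} then translate, via the same dictionary, into $(e,f)$ being an ef-pair with $e$ an embedding and $f$ a factor.

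\emph{Equivalence and main obstacle.} The natural maps $\eta_X : X \to \CoAt(\F(X))$, $x \mapsto \{x\}$, and $\mu_A : A \to \F(\CoAt(A))$, $a \mapsto \{ x \in \CoAt(A) : a \leq x \}$, are the unit and counit of the underlying Stone duality between second-countable Stone spaces and countable Boolean algebras, and their compatibility with the dynamics is immediate from the constructions. The main obstacle I anticipate is the tight back-and-forth translation of conditions~\ref{itm: algebraic morphisms 4}--\ref{itm: algebraic morphisms 5}: to recover the bi-implicational factor property from an order-theoretic inequality, one exploits that $\bigwedge$ in $\F(X)$ is the closure of the union and that co-atoms are singletons, so that evaluating an inequality at a singleton descends to a pointwise statement about $T$ and $S$---this is precisely where co-atomicity of $\pi$ and $\alpha$ becomes essential and where the footnote after the theorem (noting that these conditions are strictly weaker than coalgebra-morphism equations) is felt.
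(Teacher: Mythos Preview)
Your proposal is correct and covers the same ground as the paper's proof: both rest on identifying co-atoms with singletons, compact elements with clopens, Scott-continuity with upper-hemicontinuity, and co-atomicity with closed-valuedness, and then verifying that conditions~\ref{itm: algebraic morphisms 1}--\ref{itm: algebraic morphisms 5} are precisely the ef-pair axioms rewritten for direct images. The only organizational difference is that the paper first isolates the underlying equivalence $\F : \czPolm \to \bAlgc$ (second-countable Stone spaces with closed-valued upper-hemicontinuous multifunctions versus the lattice side, proposition~\ref{prop: F categorical equivalence}) and then argues that the induced functor on the dynamical level is essentially surjective, full, and faithful, whereas you build the inverse $\G$ explicitly via $X := \CoAt(A)$ and check the unit/counit; this is the standard trade-off between the two equivalent formulations of categorical equivalence and does not affect the substance.
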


To not digress from establishing the universality results for $\Sysef$, we move the proof to appendix~\ref{app: coalgebra and domain theory}. Of course, the universality result then also holds for $\dynAlg$. Hence the result can be seen within the tradition of existence results concerning universal domains~\parencite{Droste1993}, albeit now for the new category of dynamic algebraic lattices.
To connect back to coalgebras, note that the objects of $\dynAlg$ are coalgebras for the identity functor on $\bAlgc$. However, the morphisms are \emph{not} coalgebra morphisms. In this case, a coalgebra morphism $\varphi : (A,\alpha) \to (B,\beta)$ would be a morphism $\varphi : A \to B$ in $\bAlgc$ such that $\varphi \circ \alpha = \beta \circ \varphi$, yet here we do not have equality, but only the inequality expressed in~\ref{itm: algebraic morphisms 5}, which, together with~\ref{itm: algebraic morphisms 4}, forms a weaker equivariance condition. On the other hand, the morphisms in $\dynAlg$ are also stronger in the sense that they are not just a `one-directional' morphism in $\bAlgc$ but they rather are `bidirectional': They are embedding-projection pairs, as demanded by~\ref{itm: algebraic morphisms 2} and~\ref{itm: algebraic morphisms 3}, which play an important role in $\Alg$. Thus, this corroborates the analogy to ef-pairs, which we mentioned in section~\ref{ssec: morphisms embedding factor pairs}.

\section{Algebroidal categories of nondeterministic systems}
\label{sec: algebroidal categories of systems}

We now work toward the universality result for nondeterministic systems. As described in the proof strategy (section~\ref{ssec: proof technique Fraisse limits}), we start with an algebroidality theorem: sufficient conditions for when a category of (nondeterministic) systems is algebroidal. 
We state the theorem in section~\ref{ssec: algebroidality theorem for nondeterministic systems} and prove it in section~\ref{ssec: proof of the algebroidality theorem}. In the next section, we use this to prove the existence of a universal nondeterministic system.

\subsection{Algebroidality theorem for nondeterministic systems}
\label{ssec: algebroidality theorem for nondeterministic systems}

To state the theorem, let us first recall some terminology from topology. By a \emph{countable} \emph{inverse} (or \emph{projective}) \emph{system} of topological spaces we mean the data of a countable directed partial order $(I,\leq)$, a family $(X_i)_{i \in I}$ of topological spaces, and, for each $i \leq j$ in $I$, a surjective map $f_{ij} : X_j \to X_i$ such that $f_{ii}$ is the identity function on $X_i$ and, for $i \leq j \leq k$ in $I$, we have $f_{ij} \circ f_{jk} = f_{ik}$. 
The category-theoretic limit $(X,f_i)$, also called \emph{inverse} or \emph{projective} \emph{limit}, is given as
\begin{align}
\label{eqn: construction of limit}
X := \Big\{ 
\langle x_i : i \in I \rangle \in \prod_{i \in I} X_i 
: 
\forall i,j \in I \text{ with } i \leq j .
f_{ij} (x_j) = x_i
\Big\},
\end{align}  
endowed with the subspace topology of the product topology, and $f_i \big( \langle x_i : i \in I \rangle \big) := x_i$ are the canonical projection maps. One can show that the $f_i$ are surjective, as is any mediating morphism from another cone.
A standard result is (e.g., \cite[prop.~3.1]{Danos2015}): 
\begin{lemma}
\label{lem: second-countable profinite space iff compact zero dim polish}
A topological space is a countable projective limit of finite discrete spaces iff it is a compact, zero-dimensional Polish space. Because a countable directed partial order has a cofinal chain, the index set of the projective system can be chosen to be $I = \omega$.
\end{lemma}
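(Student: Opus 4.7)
The plan is to prove both directions separately, then reduce an arbitrary countable directed index set to $\omega$.

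For the forward direction, each finite discrete space is trivially a compact, zero-dimensional Polish space. The countable product $\prod_{i \in I} X_i$ is then compact by Tychonoff, Polish as a countable product of Polish spaces, and zero-dimensional since the preimages of singletons under projections form a clopen subbase. The inverse limit $X$, defined in~\eqref{eqn: construction of limit} by the continuous equational conditions $f_{ij}(x_j) = x_i$, is a closed subspace of the product and therefore inherits all three properties.

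For the reverse direction, I would recover $X$ as the limit along the poset $\mathcal{P}$ of finite clopen partitions of $X$, ordered by $P \leq Q$ iff $Q$ refines $P$. Since $X$ is second-countable and zero-dimensional, it admits a countable clopen basis, generating a countable Boolean algebra of clopen sets; every finite clopen partition is determined by finitely many elements of this algebra, so $\mathcal{P}$ is countable. The set is directed because $\{p \cap q : p \in P, q \in Q\} \setminus \{\emptyset\}$ is a finite clopen partition refining both $P$ and $Q$. For each $P$, equip $P$ with the discrete topology to obtain a finite space $X_P$; for $P \leq Q$, let $f_{PQ} : X_Q \to X_P$ send each block of $Q$ to the unique block of $P$ containing it, and let $q_P : X \to X_P$ send $x$ to its $P$-block. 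These maps form a cone, inducing a canonical map $q : X \to \varprojlim_P X_P$, which I would show is a homeomorphism: injectivity follows from zero-dimensionality, since distinct $x,y$ are separated by some clopen $U$ and the two-block partition $\{U, X \setminus U\}$ then distinguishes them; continuity is automatic since each $q_P$ is continuous and the limit carries the initial topology; surjectivity uses compactness, because for any coherent family $\langle A_P \rangle_P$ the closed sets $\{A_P\}_P$ have the finite intersection property (any finitely many partitions share a common refinement whose distinguished block lies in every $A_P$), so $\bigcap_P A_P \neq \emptyset$; finally, a continuous bijection from a compact space to a Hausdorff space is automatically a homeomorphism, and the limit is Hausdorff as a subspace of a product of Hausdorff spaces.

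For the cofinal-chain remark, enumerate the countable directed index set as $I = \{i_0, i_1, \ldots\}$ and inductively choose $j_0 := i_0$ and some $j_{n+1} \in I$ with $j_{n+1} \geq j_n$ and $j_{n+1} \geq i_{n+1}$ (possible by directedness); the sequence $(j_n)_{n \in \omega}$ is then cofinal in $I$, and the inverse limit restricted to this cofinal $\omega$-chain is canonically homeomorphic to the original. The main obstacle is the surjectivity step for $q$ in the reverse direction, which is precisely where the compactness of $X$ enters in an essential way via the finite intersection property.
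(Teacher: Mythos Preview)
Your proof is correct and follows the standard route. The paper does not supply its own proof of this lemma; it is stated as a known result with a citation, so there is nothing to compare against. One small point worth tightening in the reverse direction: when you argue that $\mathcal{P}$ is countable, you implicitly use compactness to ensure that \emph{every} clopen set already lies in the Boolean algebra generated by the countable clopen basis (a clopen subset of a compact space is compact, hence a finite union of basic clopens). Without compactness a second-countable zero-dimensional space can have uncountably many clopens, so this step deserves a word.
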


Now we can state the sufficient condition for algebroidality of a category of systems, i.e., a full subcategory of the category of all dynamical systems $\allSysef$.

\begin{theorem}
\label{thm: algebroidal category of systems}
Let $\subSysef$ be a full subcategory of $\allSysef$ such that
\begin{enumerate}
\item
\label{itm: algebroidal limit closure}
If $\big( (X_i,T_i), (e_{i i+1}, f_{i i+1} ) \big)_{i \in \omega}$ is an $\omega$-chain in $\subSysef$, then $(X,T)$ is again in $\subSysef$, where $X$ is defined as in equation~\ref{eqn: construction of limit} with canonical projections $f_i$ and  
\begin{align*}
T(x) := \big\{ x' \in X : \forall i \in \omega .  f_i (x) \xrightarrow{T_i} f_i (x') \big\}.
\end{align*}

\item
\label{itm: algebroidal factor closure}
If $(X,T)$ is in $\subSysef$ and $f : X \to Y$ is a continuous surjection into a finite discrete space $Y$, then $(Y,S)$ is in $\subSysef$ with $S := f \circ T \circ f^{-1}$, i.e., 
\begin{align*}
S(y) = \big\{ y' \in Y : 
\exists x, x' \in X . 
y = f(x) \text{ and } 
x \xrightarrow{T} x' \text{ and } 
f (x') = y' 
\big\}.
\end{align*}

\end{enumerate}  
Then $\subSysef$ is algebroidal and its category-theoretically finite objects are precisely those systems in $\subSysef$ that have a finite state space.
\end{theorem}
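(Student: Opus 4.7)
The plan is to verify the five algebroidality conditions together with the characterization of the category-theoretically finite objects. Several conditions are straightforward. Morphisms in $\subSysef$ are ef-pairs $(e,f)$ and are monic: if $(e,f) \circ (e_k, f_k) = (e \circ e_k,\, f_k \circ f)$ agree for $k = 1, 2$, then in particular $f_1 \circ f = f_2 \circ f$, and surjectivity of $f$ forces $f_1 = f_2$; then $e_1 = e_2$ by Proposition~\ref{prop: ef pairs determine each other}. Existence of colimits of $\omega$-chains is exactly Assumption~\ref{itm: algebroidal limit closure}. Granting the characterization of finite objects as finite-state systems, the countability conditions follow at once: up to isomorphism there are only countably many pairs $(X, T \subseteq X \times X)$ with $X$ finite, and between two such systems an ef-pair is determined by its factor component (Proposition~\ref{prop: ef pairs determine each other}), of which there are only finitely many.

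What remains is to show (a) every $(B, S) \in \subSysef$ is the colimit of an $\omega$-chain of finite-state systems, and (b) the category-theoretically finite objects are precisely the finite-state ones. For (a), I apply Lemma~\ref{lem: second-countable profinite space iff compact zero dim polish} to write $B$ as an inverse limit of finite discrete spaces $B_i$ with projections $g_i : B \to B_i$ and bonding maps $g_{ii+1} : B_{i+1} \to B_i$. Define $S_i := g_i \circ S \circ g_i^{-1}$; by Assumption~\ref{itm: algebroidal factor closure} each $(B_i, S_i)$ lies in $\subSysef$, the $g_{ii+1}$ are factors between them, and by Proposition~\ref{prop: ef pairs determine each other} they promote to ef-pairs yielding an $\omega$-chain. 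Assumption~\ref{itm: algebroidal limit closure} then produces a colimit whose state space is the inverse limit (homeomorphic to $B$) and whose dynamics $T$ satisfies $x \xrightarrow{T} x'$ iff $g_i(x) \xrightarrow{S_i} g_i(x')$ for all $i$. The inclusion $S \subseteq T$ is immediate. For $T \subseteq S$---the main obstacle---I plan, given $(x,x') \in \mathrm{graph}(T)$, to extract for each $i$ witnesses $(x_i, x_i') \in \mathrm{graph}(S)$ with $g_i(x_i) = g_i(x)$ and $g_i(x_i') = g_i(x')$, argue that $x_i \to x$ and $x_i' \to x'$ using that basic inverse-limit neighborhoods have the form $g_j^{-1}(U)$, and finally invoke closedness of $\mathrm{graph}(S)$ (guaranteed by the closed graph theorem).

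For (b), suppose $(B,S)$ has finite state space and consider a morphism $(e, f) : (B, S) \to (A, T)$ with $(A, T)$ the colimit of an $\omega$-chain $((A_i, T_i), (e_{ii+1}, f_{ii+1}))$ in $\subSysef$. The factor component $f : A \to B$ partitions $A$ into finitely many clopen fibers; by compactness each such fiber is pulled back from some finite stage, and by finiteness of $B$ a single index $i$ suffices to simultaneously realize all fibers as $f_i$-preimages of clopens in $A_i$, producing a continuous surjection $h_i : A_i \to B$ with $f = h_i \circ f_i$. A short check using the factor property of $f$ and $f_i$ shows $h_i$ is itself a factor, and by Proposition~\ref{prop: ef pairs determine each other} it promotes uniquely to an ef-pair factoring the original morphism through $(A_i, T_i)$; uniqueness follows from surjectivity of $f_i$. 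This establishes that finite-state systems are category-theoretically finite. Conversely, if $(B,S)$ is category-theoretically finite, part (a) exhibits it as a colimit of an $\omega$-chain of finite-state systems, and Lemma~\ref{lem: finite colimit is isomorphic to object in diagram} forces $(B,S)$ to be isomorphic to one of them, hence finite-state.
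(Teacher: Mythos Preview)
Your plan mirrors the paper's decomposition closely. One point needs tightening: Assumption~\ref{itm: algebroidal limit closure} says only that the inverse-limit construction $(X,T)$ lands in $\subSysef$, not that it is the colimit of the $\omega$-chain. You still have to check that the projections $f_i$ are factors (so that the $(e_i,f_i)$ form a cocone) and that the universal property holds; this is the content of the paper's Proposition~\ref{prop: every chain has colimit}, and it is a genuine verification rather than a restatement of the hypothesis. The same issue recurs in your part~(a): after showing $T=S$ you have identified $(B,S)$ with the inverse-limit object, but you still need the colimit property to conclude that $(B,S)$ is a colimit of the chain. A related elision is the claim that the bonding maps $g_{i\,i+1}$ are factors between the $(B_i,S_i)$; the paper handles this by first checking that each $g_i:(B,S)\to(B_i,S_i)$ is a factor and then invoking a commuting-triangle lemma (Lemma~\ref{lem: commuting trianlge lemma}).

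Modulo these omissions your argument is correct. Your route in~(a)---build the abstract colimit, then match its dynamics to $S$ via witnesses converging to $(x,x')$ and closedness of $\Gr(S)$---is a slight variation on the paper, which instead directly verifies that $(B,S)$ together with the $g_i$ satisfies the universal property (Proposition~\ref{prop: every object is colimit}). Your treatment of~(b), including the passage through Lemma~\ref{lem: finite colimit is isomorphic to object in diagram} for the converse, and of the countability conditions coincides with the paper's.
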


We will prove this result in section~\ref{ssec: proof of the algebroidality theorem}. We end this subsection by checking to which categories of systems the result applies (theorem~\ref{thm: algeboidal subcategories} below) and to which it does not (example~\ref{exm: deterministic systems not algeboidal wrt finite system} below).

\begin{theorem}
\label{thm: algeboidal subcategories}
The following full subcategories of $\allSysef$ satisfy the conditions~\ref{itm: algebroidal limit closure} and~\ref{itm: algebroidal factor closure} of theorem~\ref{thm: algebroidal category of systems}.
\begin{enumerate}
\item
\label{itm: algeboidal subcategories all systems}
$\allSysef$ itself (i.e., the full subcategory of all systems in $\allSysef$).
\item
\label{itm: algeboidal subcategories non-trivial systems}
$\Sysef$ consisting of all nontrivial systems in $\allSysef$.
\item
\label{itm: algeboidal subcategories total systems}
$\tSysef$ consisting of all total and nontrivial systems in $\allSysef$.
\end{enumerate}
\end{theorem}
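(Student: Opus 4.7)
The plan is to verify conditions~\ref{itm: algebroidal limit closure} and~\ref{itm: algebroidal factor closure} of Theorem~\ref{thm: algebroidal category of systems} for each of the three subcategories. Along the way I would first check, once and for all, that both constructions yield valid objects of $\allSysef$ (i.e., actual dynamical systems in the sense of Definition~\ref{def: dynamical system}), so that the remaining work reduces to tracking the defining predicate of each subcategory.

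For condition~\ref{itm: algebroidal limit closure} in $\allSysef$: the state space $X$ is a compact, zero-dimensional, Polish space by Lemma~\ref{lem: second-countable profinite space iff compact zero dim polish} applied to the inverse system of state spaces (bonded by the factor components $f_{i,i+1}$ of the given ef-pairs). The limit dynamics has graph
\begin{align*}
\Gr(T) \;=\; \bigcap_{i \in \omega} (f_i \times f_i)^{-1}[\Gr(T_i)],
\end{align*}
an intersection of closed subsets of $X \times X$, hence closed; the closed graph theorem then yields closed-valuedness and upper-hemicontinuity of $T$. For condition~\ref{itm: algebroidal factor closure} in $\allSysef$, $Y$ is trivially a compact, zero-dimensional, Polish space (being finite and discrete), and $\Gr(S) = (f \times f)[\Gr(T)]$ is the continuous image of a closed (hence compact) subset in the Hausdorff space $Y \times Y$, so closed; the closed graph theorem applies again.

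For $\Sysef$, condition~\ref{itm: algebroidal factor closure} is immediate: any witness $x \xrightarrow{T} x'$ of nontriviality in $(X,T)$ gives the witness $f(x) \xrightarrow{S} f(x')$ in $(Y,S)$ by the definition of $S$. For condition~\ref{itm: algebroidal limit closure}, I would start with a witness $a_{i_0} \xrightarrow{T_{i_0}} b_{i_0}$ in some $(X_{i_0}, T_{i_0})$ and build two threads $a, b \in X$ with $a_k \xrightarrow{T_k} b_k$ for every $k$. For $k \leq i_0$ I would set $a_k := f_{k i_0}(a_{i_0})$ and $b_k := f_{k i_0}(b_{i_0})$, using that a factor preserves the dynamics (clause~\ref{itm: nondet equivariance factor} of Definition~\ref{def: system morphism}). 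For $k > i_0$ I would proceed inductively using the ``iff''-characterization of factors recalled right after Definition~\ref{def: system morphism}: given $a_k \xrightarrow{T_k} b_k$, the factor $f_{k, k+1}$ supplies $a_{k+1}, b_{k+1} \in X_{k+1}$ with $f_{k,k+1}(a_{k+1}) = a_k$, $f_{k,k+1}(b_{k+1}) = b_k$, and $a_{k+1} \xrightarrow{T_{k+1}} b_{k+1}$, which is precisely the lift required.

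The main obstacle is preserving totality in the limit for $\tSysef$: a naive coordinate-by-coordinate lift of a given thread $x = (x_k)$ to a successor thread fails, because the ``iff''-characterization of factors only delivers \emph{some} compatible preimage pair, not one tied to a prescribed coordinate. I would instead run a compactness argument. Put $A_k := T_k(x_k) \subseteq X_k$; each $A_k$ is nonempty (totality of $T_k$) and closed (closed-valuedness of $T_k$), and $f_{ij}$ restricts to a map $A_j \to A_i$ by factor preservation of dynamics. Inside the compact Hausdorff space $\prod_k A_k$, set
\begin{align*}
C_{ij} \;:=\; \bigl\{\, (y_k) \in \prod_k A_k \,:\, f_{ij}(y_j) = y_i \,\bigr\},
\end{align*}
which is closed (preimage of the diagonal under a continuous map), and observe that the desired inverse limit is $\bigcap_{i \leq j} C_{ij}$. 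The finite intersection property holds: given a finite collection of pairs bounded by some $N$, pick any $y_N \in A_N$, let $y_k := f_{kN}(y_N) \in A_k$ for $k \leq N$, and choose $y_k \in A_k$ arbitrarily for $k > N$; the resulting thread lies in every relevant $C_{ij}$. Compactness then supplies $x' \in X$ with $x'_k \in T_k(x_k)$ for all $k$, i.e.\ $x \xrightarrow{T} x'$, as required. Condition~\ref{itm: algebroidal factor closure} for $\tSysef$ is then routine: for any $y \in Y$, surjectivity of $f$ and totality of $T$ yield $x \in f^{-1}(y)$ and $x' \in T(x)$, whence $f(x') \in S(y)$; nontriviality of $(Y,S)$ follows as in the $\Sysef$ case.
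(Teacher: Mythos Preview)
Your proposal is correct and follows essentially the same route as the paper; in particular, your compactness argument for totality via the finite intersection property in $\prod_k A_k$ is equivalent to the paper's decreasing-sequence argument using $F_i := f_i^{-1}(T_i(f_i(x)))$ inside $X$. One minor slip: Lemma~\ref{lem: second-countable profinite space iff compact zero dim polish} is about inverse limits of \emph{finite} discrete spaces, so it does not literally apply to the general $X_i$ in the $\omega$-chain---what you need (and what the paper also just asserts) is the standard closure of compact zero-dimensional Polish spaces under countable inverse limits.
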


\begin{proof}
Ad~(\ref{itm: algeboidal subcategories all systems}).
Concerning condition~\ref{itm: algebroidal limit closure}, let $\big( (X_i,T_i), (e_{i i+1}, f_{i i+1} ) \big)_{i \in \omega}$ be an $\omega$-chain in $\allSysef$. We need to show that $(X,T)$ is a dynamical system, i.e., that (a) $X$ is a compact, zero-dimensional Polish space and (b) $T$ is a closed-valued, upper-hemicontinuous multifunction.
Regarding (a), $X$ is, via the $f_i$, a projective limit of a countable projective diagram of compact, zero-dimensional Polish spaces, and these spaces are closed under such limits.
Regarding (b), we show that the graph of $T$ is closed. First, note that, since $f_i$ is continuous, its graph is closed, and so is the graph of $f_i^{-1}$. By assumption, the graph of $T_i$ is closed, so the graph of $f^{-1} \circ T_i \circ f_i$ is closed qua composition of multifunctions with closed graphs. Hence $\Gr (T) = \bigcap_{i \in \omega} f^{-1} \circ T_i \circ f_i$ is closed qua intersection of closed sets.

Concerning condition~\ref{itm: algebroidal factor closure}, let $(X,T)$ be a dynamical system and $f : X \to Y$ a continuous surjection into a finite discrete space $Y$. We need to show that $(Y,S)$ is a dynamical system. 
Indeed, qua finite discrete space, $Y$ is compact zero-dimensional Polish and the multifunction $S$ trivially has a closed graph.

Ad~(\ref{itm: algeboidal subcategories non-trivial systems}).
Concerning condition~\ref{itm: algebroidal limit closure}, let $\big( (X_i,T_i), (e_{i i+1}, f_{i i+1} ) \big)_{i \in \omega}$ be an $\omega$-chain in $\Sysef$. From~(\ref{itm: algeboidal subcategories all systems}), we know that $(X,T)$ is a dynamical system, so it remains to show that it is nontrivial.
Indeed, since $X_0$ is nontrivial, let $x_0 \xrightarrow{T_0} x'_0$. Since $f_{01} : X_1 \to X_0$ is a factor, there is $x_1 \xrightarrow{T_1} x'_1$ with $f_{01} (x_1) = x_0$ and $f_{01} (x_1') = x_0'$. Since $f_{12} : X_2 \to X_1$ is a factor, there is $x_2 \xrightarrow{T_2} x'_2$ with $f_{12} (x_2) = x_1$ and $f_{12} (x_2') = x_1'$. We continue like this to build sequences $x = \langle x_i : i \in \omega \rangle$ and $x' = \langle x_i' : i \in \omega \rangle$ with 
$x_i \xrightarrow{T_i} x_i'$ and
$f_{i i+1} (x_{i+1}) = x_i$ and
$f_{i i+1} (x_{i+1}') = x_i'$.
So, by definition, $x,x' \in X$ and $x \xrightarrow{T} x'$.

Concerning condition~\ref{itm: algebroidal factor closure}, let $(X,T)$ be a nontrivial dynamical system and $f : X \to Y$ a continuous surjection into a finite discrete space $Y$. From~(\ref{itm: algeboidal subcategories all systems}), we know that $(Y,S)$ is a dynamical system, and it again is nontrivial: if $x \xrightarrow{T} x'$, then $f(x) \xrightarrow{S} f(x')$ by definition of $S$.

Ad~(\ref{itm: algeboidal subcategories total systems}).
Concerning condition~\ref{itm: algebroidal limit closure}, let $\big( (X_i,T_i), (e_{i i+1}, f_{i i+1} ) \big)_{i \in \omega}$ be an $\omega$-chain in $\tSysef$. From~(\ref{itm: algeboidal subcategories non-trivial systems}), we know that $(X,T)$ is a nontrivial dynamical system, so it remains to show that $T$ is total.
Let $x \in X$ and find $x' \in T(x)$. Consider 
$F_i := f_i^{-1} ( T_i ( f_i (x) ) )$, 
which is a nonempty closed set.
If $i \leq j$, then $F_i \supseteq F_j$: if $x' \in F_j$, then 
$f_j (x') \in  T_j ( f_j (x) )$, 
so, since $f_{ij}$ is a factor and hence 
$f_{ij} \circ T_j \subseteq T_i \circ f_{ij}$,
we have
\begin{align*}
f_i (x')
=
f_{ij}  \big( f_j (x') \big) \in  f_{ij} \big[ T_j ( f_j (x) ) \big]
\subseteq
T_i \big[ f_{ij} ( f_j (x) ) \big]
=
T_i \big[ f_i  (x) \big],
\end{align*}
hence $x' \in F_i$. By compactness, there is $x' \in \bigcap_i F_i = T(x)$.

Concerning condition~\ref{itm: algebroidal factor closure}, let $(X,T)$ be a total and nontrivial dynamical system and $f : X \to Y$ a continuous surjection into a finite discrete space $Y$. From~(\ref{itm: algeboidal subcategories non-trivial systems}), we know that $(Y,S)$ is a nontrivial dynamical system, so it remains to show that it is total.
Let $y \in Y$ and find $y'$ with $y' \in S(y)$. Since $f$ is surjective, there is $x \in X$ with $f(x) = y$. Since $T$ is total, there is $x' \in T(x)$. Define $y' := f(x')$. Then, by definition of $S$, $y' \in S(y)$, as needed. 
\end{proof}

The list in theorem~\ref{thm: algeboidal subcategories} begs the question: can deterministic systems be included? No, as the next example shows: they do not satisfy the conclusion of theorem~\ref{thm: algebroidal category of systems} (they violate part~(\ref{itm: algebroidal factor closure}) of the sufficient condition).

\begin{example}
\label{exm: deterministic systems not algeboidal wrt finite system}
The subcategory $\detSysef$ of deterministic systems cannot be algebroidal with category-theoretically finite objects being the systems with a finite state space. This is because not every deterministic system is a colimit along ef-pairs of finite deterministic systems. In fact, the full shift $(2^\omega , \sigma)$ does not have any nontrivial finite factor. (An elementary proof is in appendix~\ref{app: proofs from section algebroidal categories of systems}.) Hence there are not enough factors to form a diagram with the full shift being the colimit along ef-pairs.
\end{example}

However, we will see, in section~\ref{sec: universality for deterministic systems}, that the category $\detSysef$ \emph{is} semi-algebroidal, but its category-theoretically finite objects are very different: they are the shifts. Since there are uncountably many such shifts, $\detSysef$ is only semi-algebroidal and not algebroidal. This indicates that the notion of category-theoretical finiteness depends quite subtly on the surrounding category.

\subsection{Proof of the algebroidality theorem}
\label{ssec: proof of the algebroidality theorem}

In this section, we prove theorem~\ref{thm: algebroidal category of systems} in a sequence of propositions. (We will use some of these again when considering deterministic systems.)

Regarding part~(1) of being algebroidal, we have the following.

\begin{proposition}
\label{prop: all morphisms are monic}
Let $\subSysef$ be a full subcategory of $\allSys$. Then all morphisms in $\subSysef$ are monic.
\end{proposition}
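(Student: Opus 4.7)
\medskip

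\noindent
\textbf{Proof plan.} The plan is to unpack what it means for an ef-pair morphism in $\subSysef$ to be monic and then reduce the question to a very simple cancellation statement in $\mathsf{Set}$.

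Suppose $(e,f) : (X,T) \to (Y,S)$ is a morphism in $\subSysef$ and let $(\alpha_1, \beta_1), (\alpha_2, \beta_2) : (W,U) \to (X,T)$ be ef-pairs with
\begin{align*}
(e,f) \circ (\alpha_1, \beta_1) = (e,f) \circ (\alpha_2, \beta_2).
\end{align*}
By definition of composition, this means $e \circ \alpha_1 = e \circ \alpha_2$ and $\beta_1 \circ f = \beta_2 \circ f$. I want to conclude $\alpha_1 = \alpha_2$ and $\beta_1 = \beta_2$.

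The key observation is that the factor component is the easier handle. By definition, $f : Y \to X$ and $\beta_i : X \to W$ are plain (continuous) surjective functions, so the equation $\beta_1 \circ f = \beta_2 \circ f$ is an equation of functions. Since $f$ is surjective (hence epic in $\mathsf{Set}$), for any $x \in X$ there is some $y \in Y$ with $f(y) = x$, and therefore
\begin{align*}
\beta_1(x) = \beta_1(f(y)) = \beta_2(f(y)) = \beta_2(x),
\end{align*}
so $\beta_1 = \beta_2$. Once the factor components agree, I invoke Proposition~\ref{prop: ef pairs determine each other}\ref{prop: ef pairs determine each other 1}: in any ef-pair the embedding is uniquely determined by the factor, so $\alpha_1 = \underline{\beta_1} = \underline{\beta_2} = \alpha_2$. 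This yields $(\alpha_1, \beta_1) = (\alpha_2, \beta_2)$, establishing that $(e,f)$ is monic.

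There is no real obstacle here: the argument uses nothing beyond (i) that composition of ef-pairs acts componentwise with the factor appearing on the right, (ii) surjectivity of factors as set-theoretic functions, and (iii) the uniqueness half of Proposition~\ref{prop: ef pairs determine each other}. In particular, the argument goes through verbatim in any full subcategory of $\allSysef$, so the restriction to $\subSysef$ plays no role.
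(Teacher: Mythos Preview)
Your proof is correct and follows essentially the same approach as the paper's: use surjectivity of the factor $f$ to cancel it on the right and conclude that the factor components agree, then invoke the uniqueness of the embedding determined by a factor (Proposition~\ref{prop: ef pairs determine each other}) to conclude the embedding components agree as well.
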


\begin{proof}[Proof]
Let $(e,f) : (X,T) \to (Y,S)$ and $(e_1,f_1), (e_2,f_2) : (Z,R) \to (X,T)$ be morphisms in $\subSysef$ such that
$(e,f) \circ (e_1,f_1) = (e,f) \circ (e_2,f_2)$.
We show that 
$(e_1,f_1) = (e_2,f_2)$.
By definition,
$f_1 \circ f = f_2 \circ f$.
Since $f$ is a surjective function, 
$f_1 = f_2$. 
Since $e_1$ (resp.\ $e_2$) is the unique embedding corresponding to $f_1$ (resp.\ $f_2$), also $e_1 = e_2$.
\end{proof}

We show, more than part (3) requires, that any $\omega$-chain has a colimit.

\begin{proposition}
\label{prop: every chain has colimit}
Let $\subSysef$ be a full subcategory of $\allSys$ with property~(\ref{itm: algebroidal limit closure}).
Then every $\omega$-chain $\big( (X_i,T_i), (e_{i i+1}, f_{i i+1} ) \big)_{i \in \omega}$ in $\subSysef$ has a colimit in $\subSysef$, namely $\big( (X,T) , (e_i,f_i) \big)$ with $(X,T)$ and $f_i$ as in condition~\ref{itm: algebroidal limit closure}.
In particular, in any (up to unique isomorphism unique) colimit $\big( (X,T) , (e_i,f_i) \big)$, we have, for all $x,x' \in X$:
\begin{enumerate}
\item
\label{prop: every chain has colimit 1}
$x = x'$
iff 
$\forall i \in \omega : f_i (x) = f_i(x')$.
 
\item
\label{prop: every chain has colimit 2}
$x' \in T(x)$
iff
$\forall i \in \omega : f_i(x') \in T_i ( f_i(x) )$.
\end{enumerate}
\end{proposition}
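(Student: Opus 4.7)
The plan is to verify that the explicit construction of $(X, T)$ given in condition~(\ref{itm: algebroidal limit closure}) of theorem~\ref{thm: algebroidal category of systems}, together with the canonical projections $f_i$ and their associated embeddings, produces the colimit in $\subSysef$. By assumption, $(X, T)$ lies in $\subSysef$, and the projections $f_i : X \to X_i$ are continuous surjections satisfying $f_{ij} \circ f_j = f_i$ for $i \leq j$. Characterization~(\ref{prop: every chain has colimit 1}) is immediate from the inverse-limit description~(\ref{eqn: construction of limit}), and characterization~(\ref{prop: every chain has colimit 2}) is just the definition of $T$; both transfer to any other colimit, since a system isomorphism is a homeomorphism preserving the dynamics and since cocone-compatibility identifies the two families of projections. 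Once each $f_i$ is shown to be a factor, I obtain $e_i := \underline{f_i}$ from proposition~\ref{prop: ef pairs determine each other}\,(\ref{prop: ef pairs determine each other 1}), and the cocone identities $(e_j, f_j) \circ (e_{ij}, f_{ij}) = (e_i, f_i)$ follow from $f_{ij} \circ f_j = f_i$ and uniqueness of the embedding associated to a factor.

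The main technical step is to verify that each $f_i : (X, T) \to (X_i, T_i)$ is a factor. Continuity and surjectivity of $f_i$ are standard properties of topological inverse limits of surjective maps between compact Hausdorff spaces. The nontrivial part of the factor condition is: given $y \xrightarrow{T_i} y'$ in $X_i$, find $x, x' \in X$ with $f_i(x) = y$, $f_i(x') = y'$, and $x \xrightarrow{T} x'$. For every $j \geq i$ I set
\begin{align*}
W_j := \{ (z,z') \in X_j \times X_j : z \xrightarrow{T_j} z',\ f_{ij}(z) = y,\ f_{ij}(z') = y' \}.
\end{align*}
Each $W_j$ is nonempty because $f_{ij}$ is a factor, and closed because $\Gr(T_j)$ is closed and $f_{ij}$ is continuous. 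The maps $(z,z') \mapsto (f_{jk}(z), f_{jk}(z'))$ send $W_k$ into $W_j$ for $j \leq k$ (applying the factor $f_{jk}$ to a $T_k$-edge yields a $T_j$-edge), forming an inverse system of nonempty compact spaces whose limit is therefore nonempty. Any coherent sequence in this limit extends uniquely to a pair $(x, x') \in X \times X$ satisfying $f_i(x) = y$, $f_i(x') = y'$, and $f_j(x) \xrightarrow{T_j} f_j(x')$ for every $j$, whence $x \xrightarrow{T} x'$ by definition of $T$.

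For the universal property, let $\{ (g_i, h_i) : (X_i, T_i) \to (Y, S) \}_{i \in \omega}$ be any cocone in $\subSysef$; equivalently (remark~\ref{rmk: limit vs colimit}), the factors $h_i : (Y, S) \to (X_i, T_i)$ form a cone. The universal property of the topological inverse limit yields a unique continuous $h : Y \to X$ with $f_i \circ h = h_i$. Surjectivity of $h$ follows because, for any $x \in X$, the nested family $\{ h_i^{-1}(f_i(x)) \}_i$ of nonempty closed subsets of the compact space $Y$ has nonempty intersection, any point of which lies in $h^{-1}(x)$. The nontrivial direction of the factor condition for $h$ is proved similarly: for $x \xrightarrow{T} x'$, the sets $\{(z,z') \in Y \times Y : z \xrightarrow{S} z',\ h_i(z) = f_i(x),\ h_i(z') = f_i(x')\}$ are nonempty (since each $h_i$ is a factor), closed, and nested, so their intersection is nonempty. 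The mediating ef-pair is then $(\underline{h}, h)$, unique because its factor component already is.

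The main obstacle throughout is the passage from local witnesses at each finite stage $j$ to a single coherent global pair of elements: at every stage the factor condition hands me a witness, but I must glue these into one pair in the inverse limit. The recurring tool is the finite-intersection property on compact spaces together with the closed-graph condition on the dynamics, applied in $X \times X$ (when showing each $f_i$ is a factor) and in $Y \times Y$ (when showing the mediating $h$ is a factor).
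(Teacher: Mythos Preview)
Your proof is correct and follows the paper's overall outline: verify that the explicit inverse-limit construction yields a cocone of ef-pairs, then establish the universal property via the unique continuous mediating map and a compactness argument for the back condition. The one methodological difference is in the back condition for $f_i$: the paper lifts a transition $y \xrightarrow{T_i} y'$ to $X$ by an inductive construction, choosing preimages one stage at a time via the factors $f_{j\,j+1}$, whereas you package this as a compactness argument on the inverse system $W_j \subseteq X_j \times X_j$. Your version is more uniform with the rest of the proof (the same compactness trick recurs for the mediating morphism), while the paper's inductive lift is slightly more elementary at that particular step. Both are entirely valid, and the forward factor conditions you leave implicit are indeed immediate from the definition of $T$ and from each $h_i$ being a factor.
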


\begin{proof}
By property~\ref{itm: algebroidal limit closure}, $(X,T)$ is an object in $\subSysef$. We first show that $f_i : (X,T) \to (X_i , T_i)$ is a factor. Then, writing $e_i := \underline{f_i}$, we have the ef-pairs $(e_i , f_i) : (X_i , T_i) \to (X,T)$, which commute with the $(e_{i j}, f_{i j} )$. In particular, we have that $\big( (X,T), (e_i , f_i) \big)$ is a cocone.

To show that $f_i$ is a factor, note that it already is a surjective continuous function. Concerning equivariance~\ref{itm: nondet equivariance factor}, if $x \xrightarrow{f_i} f_i (x)$ and $x \xrightarrow{T} x'$, then, by definition, $f_i (x') \in T_i ( f_i (x) )$, so $x' \xrightarrow{f_i} f_i (x')$ and $f_i (x) \xrightarrow{T_i} f_i (x')$.
For equivariance~\ref{itm: nondet equivariance hom back},
Assume 
$x_i \xrightarrow{T_i} x_i'$.
For $k < i$, define $x_k := f_{ki}(x_i) \in X_k$ and $x'_k := f_{ki}(x'_i) \in X_k$, so, since, $f_{ki}$ is a factor, $x_k \xrightarrow{T_k} x_k'$. 
For $i+1$, there is, since $f_{i i+1}$ is a factor, some $x_{i+1} \xrightarrow{T_{i+1}} x_{i+1}'$ such that $f_{i i+1} (x_{i+1}) = x_i$ and $f_{i i+1} (x'_{i+1}) = x'_i$. 
Similarly, for $i+2$, and so on. 
So we get sequences $x = (x_j)$ and $x' = (x'_j)$ with 
$x_j \xrightarrow{T_j} x_j'$ and
$f_{j j+1} (x_{j+1}) = x_j$ and
$f_{j j+1} (x_{j+1}') = x_j'$.
So, by definition, $x,x' \in X$ and $x \xrightarrow{T} x'$ with $x \xrightarrow{f_i} x_i$ and $x' \xrightarrow{f_i} x_i'$.

Finally, we show that $\big( (X,T), (e_i , f_i) \big)$ is limiting. 
Let 
$\big( (Y,S), (g_i , h_i) \big)$ 
be a cocone in $\subSysef$ to 
$( (X_i,T_i), (e_{ij}, f_{ij} ) )$.
In particular, $(Y,h_i)$ is a cone to $(X_i , f_{ij})$, so there is a unique mediating  
$u : Y \to X$, 
which is a continuous surjective function that is, as usual, given by $u(y) := \langle h_i (y) : i \in \omega \rangle$. 
So it suffices to show that it is a factor.

Concerning equivariance~\ref{itm: nondet equivariance factor}, assume
$y \xrightarrow{u} u(y) =: x$
and
$y \xrightarrow{S} y'$.
We need to show 
$u(y) \xrightarrow{T} u(y') =: x'$.
Given $i \in \omega$, we have, since $h_i$ is a factor, 
$h_i (y') \in T_i ( h_i (y) )$.
Since $h_i = f_i \circ u$, we have
$h_i (y) = f_i \circ u (y) = f_i (x)$
and similarly
$h_i (y') = f_i (x')$. Hence
$f_i (x') \in T_i ( f_i (x) )$, as needed.

Concerning equivariance~\ref{itm: nondet equivariance hom back}, assume $x \xrightarrow{T} x'$, and find $y,y' \in Y$ with $u(y) = x$, $u(y') = x'$, and $y \xrightarrow{S} y'$. We make a compactness argument:
Define
\begin{align*}
F_i := \big\{ (y,y') \in Y \times Y : 
h_i(y) = f_i (x) \text{ and }
h_i(y') = f_i (x') \text{ and }
y \xrightarrow{S} y' 
\big\}.
\end{align*}
It is straightforward to check that $F_i$ is nonempty and closed, and if $i \leq j$, then $F_i \supseteq F_j$.
So, by compactness, there is $y,y' \in Y$ with $(y,y') \in \bigcap_i F_i$, hence $y \xrightarrow{S} y'$ and, for all $i \in \omega$, 
$f_i ( u (y) ) = h_i (y) = f_i(x)$ and
$f_i ( u (y') ) = h_i (y') = f_i(x')$, 
so 
$u(y) = x$ and
$u(y') = x'$, as needed.
\end{proof}

The remaining part~(2) of being semi-algebroidal we show in two steps. First, we show it when understanding `finite' in the set-theoretic sense, and then we show that this is equivalent to the category-theoretic sense.

We start, though, with a lemma, which is a version of \parencite[lem.~2.1]{Irwin2006}. For completeness, we add a proof in appendix~\ref{app: proofs from section algebroidal categories of systems}.

\begin{lemma}
\label{lem: commuting trianlge lemma}
Let $f : (X,T) \to (Y,S)$ and $g : (X,T) \to (Z,R)$ be factors of systems in $\allSys$ and let $h : Y \to Z$ be a function with $h \circ f = g$:
\begin{equation*}
\begin{tikzcd}
&
(X,T)
\arrow[dl, swap, "f"]
\arrow[dr, "g"]
&
\\
(Y,S)
\arrow[rr, "h"]
&
&
(Z,R)
\end{tikzcd}
\end{equation*} 
Then $h : (Y,S) \to (Z,R)$ already is a factor.
\end{lemma}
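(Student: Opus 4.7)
My plan is to check directly the three things required for $h : (Y,S) \to (Z,R)$ to be a factor: that $h$ is a continuous surjection, and that it satisfies the factor characterization ``$z \xrightarrow{R} z'$ iff $\exists y,y' \in Y$ with $h(y)=z$, $h(y')=z'$, $y \xrightarrow{S} y'$'' recalled in the text below Definition~\ref{def: system morphism}.

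For \emph{continuity}, I would exploit that $f : X \to Y$ is a continuous surjection from a compact space onto a Hausdorff space (both $X$ and $Y$ are compact zero-dimensional Polish spaces), hence a quotient map. Since $h \circ f = g$ is continuous, the universal property of the quotient topology then yields continuity of $h$. For \emph{surjectivity}, I would note that $g = h \circ f$ is surjective (as $g$ is a factor) and $f$ is surjective, so $h$ must be surjective.

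The factor characterization then unpacks into two straightforward chases along the commuting triangle, each using the factor characterization for $f$ or $g$. In one direction, suppose $y \xrightarrow{S} y'$ with $h(y) = z$ and $h(y') = z'$. Applying the factor property of $f$, pick $x,x' \in X$ with $f(x)=y$, $f(x')=y'$, and $x \xrightarrow{T} x'$; then $g(x) = h(f(x)) = z$, $g(x') = z'$, and since $g$ is a factor, equivariance gives $z \xrightarrow{R} z'$. Conversely, suppose $z \xrightarrow{R} z'$. Applying the factor property of $g$, pick $x,x'\in X$ with $g(x)=z$, $g(x')=z'$, and $x \xrightarrow{T} x'$; then $y := f(x)$ and $y' := f(x')$ satisfy $h(y)=g(x)=z$, $h(y')=z'$, and equivariance of $f$ gives $y \xrightarrow{S} y'$, as required.

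No step looks to be a real obstacle; the only subtlety worth flagging explicitly is the invocation of the compact-to-Hausdorff quotient argument for continuity of $h$, which is where the topological hypotheses on $X$ and $Y$ (baked into Definition~\ref{def: dynamical system}) quietly do the work. Everything else is a two-line diagram chase using the biconditional form of the factor property.
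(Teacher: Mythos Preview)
Your proof is correct and follows essentially the same approach as the paper's: both verify continuity, surjectivity, and the two equivariance conditions via the same diagram chases through $f$ and $g$. The only cosmetic difference is in the continuity argument---the paper shows $h^{-1}(C) = f[g^{-1}(C)]$ and uses that $f$ is a closed map, while you invoke the equivalent fact that $f$ is a quotient map; both rest on the same compact-to-Hausdorff observation you flagged.
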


\begin{proposition}
\label{prop: every object is colimit}
Let $\subSysef$ be a full subcategory of $\allSys$ with property~(\ref{itm: algebroidal factor closure}).
Then every object of $\subSysef$ is a colimit of an $\omega$-chain of dynamical systems in $\subSysef$ that have finite state spaces.
\end{proposition}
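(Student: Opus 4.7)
The plan is to realize any $(X,T) \in \subSysef$ as a colimit of an $\omega$-chain of finite-state systems obtained by pushing the dynamics $T$ forward along the profinite presentation of the state space.

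First, by Lemma~\ref{lem: second-countable profinite space iff compact zero dim polish}, I would write $X = \varprojlim_{i \in \omega} X_i$ as an inverse limit of finite discrete spaces, with bonding surjections $f_{ij} : X_j \to X_i$ and projections $f_i : X \to X_i$. On each $X_i$, I define the pushforward dynamics $T_i := f_i \circ T \circ f_i^{-1}$, so that $f_i$ presents $(X_i, T_i)$ as a factor of $(X,T)$ by construction. Condition~(\ref{itm: algebroidal factor closure}) of Theorem~\ref{thm: algebroidal category of systems} therefore places each $(X_i, T_i)$ in $\subSysef$. Because $f_{ij} \circ f_j = f_i$ and both $f_i$ and $f_j$ are factors, Lemma~\ref{lem: commuting trianlge lemma} gives that $f_{ij} : (X_j, T_j) \to (X_i, T_i)$ is itself a factor. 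Forming the associated embeddings $e_{ij}, e_i$ via Proposition~\ref{prop: ef pairs determine each other}, I obtain an $\omega$-chain $\big((X_i,T_i), (e_{i,i+1}, f_{i,i+1})\big)$ in $\subSysef$ together with a candidate colimiting cocone $\big((X,T), (e_i, f_i)\big)$.

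To verify the universal property, take any other cocone $\big((Y,S), (g_i, h_i)\big)$ in $\subSysef$. The unique topological mediating continuous surjection into the inverse limit is $u : Y \to X$, $y \mapsto \langle h_i(y) : i \in \omega \rangle$, satisfying $f_i \circ u = h_i$. Forward equivariance of $u$ will follow once I establish the key identity $\Gr(T) = \bigcap_i \Gr(f_i^{-1} \circ T_i \circ f_i)$, combined with forward equivariance of the $h_i$ and $f_i$. Surjectivity of $u$ and backward equivariance are then obtained by compactness arguments of exactly the same shape as those in the proof of Proposition~\ref{prop: every chain has colimit}: for each required witness, one assembles a decreasing chain of nonempty closed subsets of $Y$ (or $Y \times Y$) indexed by $i$, each carrying witnesses compatible with the demand at level $i$, and invokes compactness.

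The main obstacle I anticipate is the identity $\Gr(T) = \bigcap_i \Gr(f_i^{-1} \circ T_i \circ f_i)$, on which the entire colimit argument rests. The inclusion $\subseteq$ is immediate from $T_i$ being the pushforward. For the converse, I would exploit that, since each $X_i$ is finite discrete, a basis of clopens for $X \times X$ is given by rectangles $f_i^{-1}(\{a\}) \times f_i^{-1}(\{b\})$; so if $(x,x') \notin \Gr(T)$, closedness of $\Gr(T)$ furnishes some such rectangle around $(x,x')$ disjoint from $\Gr(T)$, which forces $f_i(x) \not\xrightarrow{T_i} f_i(x')$ and hence $(x,x') \notin \Gr(f_i^{-1} \circ T_i \circ f_i)$ for that $i$.
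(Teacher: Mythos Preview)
Your proposal is correct and follows essentially the same route as the paper: profinite presentation of $X$, pushforward dynamics $T_i := f_i \circ T \circ f_i^{-1}$, condition~(\ref{itm: algebroidal factor closure}) to land in $\subSysef$, the commuting-triangle lemma for the bonding maps, and compactness arguments for the universal property. The one tactical difference is that for the factor condition~\ref{itm: nondet equivariance factor} of the mediating map $u$, you invoke the graph identity $\Gr(T) = \bigcap_i \Gr(f_i^{-1} \circ T_i \circ f_i)$ (proved via closedness of $\Gr(T)$ and the clopen rectangle basis), whereas the paper runs a direct compactness argument on nested closed sets $F_i \subseteq X \times X$ and then identifies the resulting witnesses with $u(y), u(y')$; these are equivalent in spirit and effort.
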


\begin{proof}
Let $(X,T)$ be an object in $\subSysef$. 
Since $X$ is a compact zero-dimensional Polish space, there is, by lemma~\ref{lem: second-countable profinite space iff compact zero dim polish}, an inverse sequence $(X_i , f_{ij})_{i \in \omega}$ of finite discrete spaces whose limit is $(X, f_i)$.
For $i \in \omega$, define
$T_i : X_i \rightrightarrows X_i$  
as
$f_i \circ T \circ f_i^{-1}$, so, by condition~\ref{itm: algebroidal factor closure}, $(X_i ,T_i)$ is in $\subSysef$.

We first show that each $f_i : X \to X_i$ is a factor. By construction, it is a continuous surjection. Concerning equivariance~\ref{itm: nondet equivariance factor}, if
$x \xrightarrow{f_i} x_i$ and $x \xrightarrow{T} x'$, then take 
$x_i' := f_i (x')$, 
so, since $x \in f_i^{-1} (x_i)$ and $x' \in T(x)$,
we have $x_i \xrightarrow{T_i} x_i'$ and $x' \xrightarrow{f_i} x_i'$.
Concerning equivariance~\ref{itm: nondet equivariance hom back}, if
$x_i \xrightarrow{T_i} x_i'$, then, by definition, there is $x, x' \in X$ with $x \in f_i^{-1} (x_i)$ and $x' \in T(x)$ and $x_i' = f_i (x')$, as needed.

By construction, $f_{ij} : X_j \to X_i$ is a function with $f_{ij} \circ f_j = f_i$. So, by lemma~\ref{lem: commuting trianlge lemma}, also $f_{ij}$ is a factor.  
Hence
$\big( (X_i, T_i) , (e_{i i+1} , f_{i i+1}) \big)_{i \in \omega}$, with $e_{i i+1}  := \underline{f_{i i+1}}$, is an $\omega$-chain in $\subSysef$ where each $X_i$ is finite.
Moreover, $\big( (X,T) , (e_i,f_i) \big)$, with $e_i := \underline{f_i}$, is a cocone to the $\omega$-chain. 

So it remains to show that the cocone is limiting: If $\big( (Y,S) , (g_i,h_i) \big)$ is another cocone, then 
$(Y , h_i)$ is a cone of $(X_i , f_{ij})$, 
so there is a unique continuous surjection $u : Y \to X$ commuting with the $f_{i}$. Hence it suffices to show that $u$ is a factor from $(Y,S)$ to $(X,T)$.

Concerning equivariance~\ref{itm: nondet equivariance hom back}, assume $x \xrightarrow{T} x'$, and find $y,y' \in Y$ with $u(y) = x$, $u(y') = x'$, and $y \xrightarrow{S} y'$.
Consider 
\begin{align*}
F_i := \big\{ (y,y') \in Y \times Y : 
h_i (y) = f_i (x) \text{ and } h_i (y') = f_i (x')  \text{ and } y \xrightarrow{S} y' 
\big\}.
\end{align*}
It is readily checked that $F_i$ is closed and nonempty, and if $i \leq j$, then $F_i \supseteq F_j$. 
Hence, by compactness, there is $(y,y') \in \bigcap_i F_i$, so $y \xrightarrow{S} y'$ and, for all $i \in \omega$, 
$f_i ( u(y)) = h_i (y) = f_i (x)$
and
$f_i ( u(y')) = h_i (y') = f_i (x')$, 
so, since $(X,f_i)$ is the projective limit,
$u(y) = x$
and
$u(y') = x'$,
as needed.

Concerning equivariance~\ref{itm: nondet equivariance factor}, assume 
$y \xrightarrow{u} x$ and $y \xrightarrow{S} y'$, and find 
$x' \in X$ with 
$x \xrightarrow{T} x'$ and $y' \xrightarrow{u} x'$.
Consider
\begin{align*}
F_i := \big\{ (x,x') \in X : 
x \xrightarrow{T} x' \text{ and } f_i (x) = h_i (y) \text{ and } f_i (x') = h_i (y')
\big\}.
\end{align*}
Again, it is readily checked that $F_i$ is closed and nonempty, and if $i \leq j$, then $F_i \supseteq F_j$. 
Hence, by compactness, there is $(x,x') \in \bigcap_i F_i$.
So $x \xrightarrow{T} x'$. 
Moreover, $x = u(y)$ because, for any $i$, we have
$f_i(x) = h_i (y) = f_i ( u(y) )$.
Similarly, $x' = u(y')$. So $u(y) \xrightarrow{T} x'$ and $y' \xrightarrow{u} x'$, as needed.
\end{proof}

To show the equivalence of the two senses of `finite', we need two more lemmas. 
The first is a useful fact about algebroidal categories.
\begin{lemma}
\label{lem: finite colimit is isomorphic to object in diagram}
Let $\mathsf{C}$ be a category in which all morphisms are monic. Let $B$ be a category-theoretically finite object of $\mathsf{C}$. If $(B,f_i)$ is the colimit of an $\omega$-chain  $(A_i, f_{i i+1})_{i \in \omega}$ in $\mathsf{C}$, then some $f_i : A_i \to B$ is an isomorphism. 
\end{lemma}

\begin{proof}
By using $g = \id_B$ in the definition of being category-theoretically finite, there is, in particular, some $i$ and a unique morphism $h : B \to A_i$ such that $g = f_i \circ h$. We claim that $h$ is the inverse of $f_i : A_i \to B$. 
Since $\id_B = g = f_i \circ h$, it remains to show $h \circ f_i = \id_{A_i}$.
Indeed, note that
$f_i \circ \id_{A_i} = \id_B \circ f_i = f_i \circ h \circ f_i$. 
Since $f_i$ is monic, this implies $\id_{A_i} =  h \circ f_i$, as needed.
\end{proof}

The second lemma is a standard fact about the interplay between partitions and projections in inverse limits. For completeness, we add a proof in appendix~\ref{app: proofs from section algebroidal categories of systems}.

\begin{lemma}
\label{lem: refine partitions}
Let $\subSysef$ be a full subcategory of $\allSys$.
Let $\big( (X_i,T_i), (e_{i i+1}, f_{i i+1} ) \big)_{i \in \omega}$ be an $\omega$-chain in $\subSysef$. Let $\big( (X,T) , (e_i,f_i) \big)$ be a colimit in $\subSysef$. 
Assume $\mathcal{C} = \{ C_1 , \ldots , C_n \}$ is a finite partition of $X$ consisting of closed sets. 
Then there is $i \in \omega$ such that $X / f_i$ (i.e., the quotient under the equivalence relation $x \equiv_i x'$ iff $f_i(x) = f_i (x')$) refines $\mathcal{C}$.\footnote{If $\mathcal{C}$ and $\mathcal{D}$ are two partitions of an underlying set $X$, we say that $\mathcal{D}$ \emph{refines} $\mathcal{C}$ if, for every $D \in \mathcal{D}$, there is $C \in \mathcal{C}$ such that $D \subseteq C$.}   
\end{lemma}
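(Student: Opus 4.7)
The plan is to use a standard compactness argument together with the cylinder-set basis of the inverse-limit topology. First I would observe that, although $\mathcal{C}$ is only assumed to consist of closed sets, each $C_k$ is in fact clopen: its complement $X \setminus C_k = \bigcup_{j \neq k} C_j$ is a finite union of closed sets and hence closed. So without loss of generality $\mathcal{C}$ is a finite clopen partition of $X$. Next, by proposition~\ref{prop: every chain has colimit} I may assume that $X$ is the topological inverse limit of the $X_i$ with canonical projections $f_i$, so that a basis for the topology of $X$ consists of sets of the form $\bigcap_{k=1}^{m} f_{i_k}^{-1}(U_k)$ with each $U_k$ open in $X_{i_k}$. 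Taking $i := \max(i_1,\ldots,i_m)$ and using $f_{i_k} = f_{i_k i} \circ f_i$, such a basic open set collapses to $f_i^{-1}(V)$ for an open $V \subseteq X_i$, and using zero-dimensionality of $X_i$ we may further assume $V$ is clopen.

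Armed with this basis, for every $x \in X$ let $k(x)$ be the unique index with $x \in C_{k(x)}$. Since $C_{k(x)}$ is open, there is some $i_x \in \omega$ and clopen $V_x \subseteq X_{i_x}$ with $x \in f_{i_x}^{-1}(V_x) \subseteq C_{k(x)}$. The collection $\{ f_{i_x}^{-1}(V_x) : x \in X \}$ is then an open cover of the compact space $X$, so it has a finite subcover $f_{i_1}^{-1}(V_1), \ldots, f_{i_m}^{-1}(V_m)$, with each $f_{i_\ell}^{-1}(V_\ell) \subseteq C_{k_\ell}$ for some $k_\ell$. Setting $i := \max(i_1,\ldots,i_m)$ and $W_\ell := f_{i_\ell i}^{-1}(V_\ell)$ (clopen in $X_i$ by continuity), we obtain $f_i^{-1}(W_\ell) = f_{i_\ell}^{-1}(V_\ell) \subseteq C_{k_\ell}$, and the $f_i^{-1}(W_\ell)$ still cover $X$.

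Finally I would check that this $i$ works. Given any $x \in X$, pick $\ell$ with $x \in f_i^{-1}(W_\ell)$, so $f_i(x) \in W_\ell$. For any $x'$ with $x' \equiv_i x$, i.e., $f_i(x') = f_i(x)$, we get $f_i(x') \in W_\ell$, so $x' \in f_i^{-1}(W_\ell) \subseteq C_{k_\ell}$. Hence the entire $\equiv_i$-class of $x$ is contained in the single partition cell $C_{k_\ell}$, which is exactly the statement that $X/f_i$ refines $\mathcal{C}$.

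I do not anticipate any real obstacle: the argument is a textbook interaction of compactness with the inverse-limit topology, and the only ingredient beyond standard topology is proposition~\ref{prop: every chain has colimit}, which identifies the colimit in $\subSysef$ with the topological inverse limit.
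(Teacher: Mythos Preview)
Your argument is correct and reaches the same conclusion as the paper, but by a genuinely different route. The paper argues by contradiction: assuming no $i$ works, it forms the descending chain of nonempty closed sets
\[
F_i \;=\; \big\{(x,x') \in X\times X : f_i(x)=f_i(x')\big\}\;\cap\;\bigcup_{k\neq l} C_k\times C_l,
\]
and uses compactness of $X\times X$ to extract a pair $(x,x')$ lying in every $F_i$; then $x\neq x'$ (different cells) while $f_i(x)=f_i(x')$ for all $i$, contradicting proposition~\ref{prop: every chain has colimit}\,(\ref{prop: every chain has colimit 1}). Your proof is direct: you first observe that the cells are automatically clopen, then cover $X$ by basic opens of the form $f_i^{-1}(V)$ each contained in a single cell, extract a finite subcover by compactness of $X$, and take the maximal index. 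Your version is a little more constructive and makes the role of the inverse-limit basis explicit; the paper's version avoids unpacking the basis but pays for it by working in the product $X\times X$. Both proofs ultimately lean on proposition~\ref{prop: every chain has colimit} in the same way---you to identify the colimit with the concrete inverse limit (and hence obtain the cylinder basis), the paper to obtain that the $f_i$ jointly separate points---so neither is lighter on external input.
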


\begin{proposition}
\label{prop: category finite iff system finite}
Let $\subSysef$ be a full subcategory of $\allSys$ with property~(\ref{itm: algebroidal factor closure}).
Let $(X,T)$ be in $\subSysef$. Then 
$(X,T)$ is category-theoretically finite in $\subSysef$ 
iff
$X$ is a finite set.
\end{proposition}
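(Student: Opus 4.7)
The plan is to prove the two implications separately. The $(\Rightarrow)$ direction reduces to Lemma~\ref{lem: finite colimit is isomorphic to object in diagram}, while the content of the proposition lies in $(\Leftarrow)$, which I would reduce to Lemma~\ref{lem: refine partitions}.

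For $(\Rightarrow)$: Suppose $(X,T)$ is category-theoretically finite in $\subSysef$. By Proposition~\ref{prop: every object is colimit}, $(X,T)$ is the colimit of some $\omega$-chain $\big((X_i,T_i), (e_{i\,i+1}, f_{i\,i+1})\big)_{i \in \omega}$ in $\subSysef$ where each $X_i$ is \emph{finite} as a set. All morphisms in $\subSysef$ are monic by Proposition~\ref{prop: all morphisms are monic}, so Lemma~\ref{lem: finite colimit is isomorphic to object in diagram} yields some ef-pair $(e_i, f_i) : (X_i, T_i) \to (X,T)$ of the colimit cocone that is an isomorphism. Its embedding component $e_i : X_i \to X$ is then a homeomorphism (the inverse ef-pair supplies a two-sided inverse function), so $X$ inherits finiteness from $X_i$.

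For $(\Leftarrow)$: Assume $X$ is finite. Let $\big((X_i,T_i), (e_{i\,i+1}, f_{i\,i+1})\big)_{i \in \omega}$ be an arbitrary $\omega$-chain in $\subSysef$ and let $\big((Y,S), (e_i, f_i)\big)$ be its colimit, which exists by Proposition~\ref{prop: every chain has colimit}. Given an ef-pair $g = (g_e, g_f) : (X,T) \to (Y,S)$, the key observation is that the factor component $g_f : Y \to X$ is a continuous surjection onto a finite discrete space, so its fibers $\{ g_f^{-1}(x) : x \in X\}$ form a finite, clopen, hence closed, partition of $Y$. Lemma~\ref{lem: refine partitions} supplies an $n \in \omega$ such that the fiber partition of $f_n : Y \to X_n$ refines this partition; for each $y \in X_n$ there is then a unique $h(y) \in X$ with $f_n^{-1}(y) \subseteq g_f^{-1}(h(y))$, defining a function $h : X_n \to X$ satisfying $g_f = h \circ f_n$, with uniqueness of $h$ following from the surjectivity of $f_n$. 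Lemma~\ref{lem: commuting trianlge lemma} automatically promotes $h$ to a factor, and Proposition~\ref{prop: ef pairs determine each other} packages it into an ef-pair $\big(\underline{h}, h\big) : (X_n, T_n) \to (X,T)$ witnessing $g = (e_n, f_n) \circ \big(\underline{h}, h\big)$. For $i \geq n$, composing with $f_{ni}$ yields the analogous factorization through $(e_i, f_i)$.

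The principal obstacle is the compactness/projective-limit argument packaged inside Lemma~\ref{lem: refine partitions}: this is what allows the single finite partition of $Y$ coming from $X$ to be detected at one finite stage $n$ of the chain. Everything else is diagram chasing via the ef-pair/factor correspondence of Proposition~\ref{prop: ef pairs determine each other} and the automatic-factor Lemma~\ref{lem: commuting trianlge lemma}.
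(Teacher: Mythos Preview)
Your argument is correct and follows the paper's proof essentially line for line: $(\Rightarrow)$ via Proposition~\ref{prop: every object is colimit} and Lemma~\ref{lem: finite colimit is isomorphic to object in diagram}, and $(\Leftarrow)$ via Lemma~\ref{lem: refine partitions} to locate the level $n$, then Lemma~\ref{lem: commuting trianlge lemma} to promote the induced map to a factor, with uniqueness from monicity. One small correction: in $(\Leftarrow)$ the colimit is \emph{given} by the hypothesis in the definition of category-theoretically finite, not supplied by Proposition~\ref{prop: every chain has colimit} (which would require property~(\ref{itm: algebroidal limit closure}), whereas here only property~(\ref{itm: algebroidal factor closure}) is assumed); and the ef-pair $(\underline{h},h)$ points from $(X,T)$ to $(X_n,T_n)$, as your own composition $g = (e_n,f_n)\circ(\underline{h},h)$ already shows.
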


\begin{proof}
($\Rightarrow$)
Assume $(X,T)$ is category-theoretically finite.
By proposition~\ref{prop: every object is colimit}, $(X,T)$ is the colimit in $\subSysef$ of an $\omega$-chain $\big( (X_i , T_i) , (e_{i i+1} , f_{i i+1}) \big)$ of systems with finite state spaces.
By lemma~\ref{lem: finite colimit is isomorphic to object in diagram}, $(X,T)$ is isomorphic to some $(X_i, T_i)$, so, like $X_i$, also $X$ is a finite set.

($\Leftarrow$)
Assume $X$ is finite.
Let $( (Y_i,S_i), (e_{i i+1}, f_{i i+1} ) )_i$ be an $\omega$-chain with colimit $((Y,S), (e_i, f_i ))$ in $\subSysef$ and $(g,h) : (X,T) \to (Y,S)$ a morphism. We need to find $n \in \omega$ such that, for all $i \geq n$, there is a unique morphism $(g',h') :  (X,T) \to (Y_i,S_i)$ such that 
$(g,h) = (e_i, f_i ) \circ (g',h')$.

By lemma~\ref{lem: refine partitions}, there is $n \in \omega$ such that $\mathcal{D} := Y/f_n$ refines the closed and finite partition $ \mathcal{C} := \{ g ( x ) : x \in X \}$ of $Y$.
We claim:
\begin{itemize}
\item[($*$)]
For each $y_n \in Y_n$, there is a unique $x \in X$ such that $f_n^{-1} (y_n) \subseteq g(x)$. We write $u(y_n) := x$. 
\end{itemize}
Indeed, regarding existence, given $y_n \in Y_n$, since $\{ f_n^{-1} (y_n) : y_n \in Y_n \}$ refines $\mathcal{C}$, there is $x \in X$ with $f_n^{-1} (y_n) \subseteq g(x)$.
Regarding uniqueness, if $x, x' \in X$ are such that $f_n^{-1}(y_n) \subseteq g(x)$ and $f_n^{-1}(y_n) \subseteq g(x')$, then, by surjectivity of $f_n$, we have $\emptyset \neq f_n^{-1}(y_n) \subseteq g(x) \cap g(x')$, so, since $g$ is partition-injective, $x = x'$.

By~($*$), we have the function $u : Y_n \to X$. For the functions $f_n :Y \to Y_n$ and $h : Y \to X$, we have $h = u \circ f_n$: Given $y \in Y$, we need to show that $x := h(y)$ is such that $f_n^{-1} ( f_n (y) ) \subseteq g(x)$. Since $\mathcal{D}$ refines $\mathcal{C}$, it suffices to show $f_n^{-1} ( f_n (y) ) \cap g(x) \neq \emptyset$. Indeed, we trivially have $y \in f_n^{-1} ( f_n (y) )$ and, qua ef-pair, we have $y \in g \circ h (y) =  g(x)$.

By lemma~\ref{lem: commuting trianlge lemma}, since $f_n$ and $h$ are factors, also $u$ is. 
This finishes the proof: Given $i \geq n$, consider $(g',h') := (e_{n i} , f_{n i}) \circ  (\underline{u} , u) : (X,T) \to (Y_i,S_i)$. 
Then 
$(e_i, f_i ) \circ (g',h')
=
(e_n , f_n ) \circ  (\underline{u} , u)
=
(g,h)
$.
And $(g',h')$ is unique with this property since $(e_i,f_i)$ is monic (proposition~\ref{prop: all morphisms are monic}). 
\end{proof}

Now we know that a full subcategory $\subSysef$ of $\allSys$ with property~\ref{itm: algebroidal limit closure} and~\ref{itm: algebroidal factor closure} is semi-algebroidal. And it immediately is algebroidal:

\begin{proposition}
\label{prop: only countably many finite objects}
Let $\subSysef$ be a full subcategory of $\allSys$ with property~(\ref{itm: algebroidal factor closure}).
Then $\subSysef$ contains up to isomorphism only countable many category-theoretically finite objects, and between any two such objects there are only countably many morphisms.
\end{proposition}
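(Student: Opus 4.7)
The plan is to reduce both statements to Proposition~\ref{prop: category finite iff system finite}, which, under assumption~(\ref{itm: algebroidal factor closure}), identifies the category-theoretically finite objects with precisely those systems $(X,T)$ whose state space $X$ is a finite set.

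For the first count, I would observe that a system $(X,T)$ in $\subSysef$ with $X$ finite has $X$ automatically discrete (a finite zero-dimensional Hausdorff space is discrete), and any multifunction $T : X \rightrightarrows X$ on a finite discrete space is automatically closed-valued and upper-hemicontinuous. Thus the finite objects of $\subSysef$ (up to isomorphism) are in bijection with the isomorphism classes of pairs $(X,T)$ where $X$ is a finite set and $T \subseteq X \times X$ is an arbitrary binary relation. Fixing $X = \{ 1 , \ldots , n\}$ for each $n \in \omega$, there are only finitely many (namely $2^{n^2}$) such relations, so there are at most countably many finite objects up to isomorphism.

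For the second count, let $(X,T)$ and $(Y,S)$ be two finite objects in $\subSysef$. A morphism $(X,T) \to (Y,S)$ in $\subSysef$ is by definition an ef-pair $(e,f)$, and by Proposition~\ref{prop: ef pairs determine each other} it is fully determined by either of its two components, for instance by the factor $f : Y \to X$. Since $Y$ and $X$ are finite sets, there are only finitely many functions $Y \to X$, and hence a fortiori only finitely many morphisms between any two finite objects. This already gives the desired (in fact sharper) countability conclusion.

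The only subtle point is the appeal to Proposition~\ref{prop: category finite iff system finite}, which requires property~(\ref{itm: algebroidal factor closure}); but this is exactly the hypothesis we are assuming. No separate obstacle arises, since the combinatorics of finite relations and finite functions is entirely elementary.
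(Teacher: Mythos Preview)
Your proof is correct and takes essentially the same approach as the paper: both reduce to Proposition~\ref{prop: category finite iff system finite} and then observe that finite systems and maps between them are finite combinatorial data. One small imprecision: the finite objects of $\subSysef$ need not be \emph{in bijection with} all finite relational structures (since $\subSysef$ may be a proper subcategory), but they inject into that countable set, which is all you need.
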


\begin{proof}
Immediate, since the category-theoretically finite objects in $\subSysef$ are the systems with a finite state space.
\end{proof}

\section{The universal and homogeneous nondeterministic system}
\label{sec: universal and homogeneous nondeterministic system}

Now we can show the universality result for nondeterministic systems (subsection~\ref{ssec: the universality result}). Then we comment on the nature of the universal system (subsection~\ref{ssec: the nature of the universal system}).

\subsection{Proof of theorem~\ref{thm: universal nondeterministic system}}
\label{ssec: the universality result}

Having established the algebroidality of $\Sysef$, it only remains, according to our proof strategy (section~\ref{ssec: proof technique Fraisse limits}), to establish the joint embedding property and the amalgamation property of $\Syseffin$. (To recall, $\Syseffin$ is the full subcategory of $\Sysef$ consisting of the category-theoretically finite objects in $\Sysef$.) Then theorem~\ref{thm: universal nondeterministic system} follows from the \Fraisse{} theorem. 
Fortunately, here this is straightforward.

We start with the amalgamation property. We can use the usual pullback (or fiber product) construction for sets.

\begin{proposition}
\label{prop: syseffin has amalgamation property}
$\Syseffin$ has the amalgamation property.
\end{proposition}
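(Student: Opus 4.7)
The plan is to construct $(C,T_C)$ as the set-theoretic pullback of $f$ and $f'$ equipped with the natural coordinatewise dynamics, and then invoke proposition~\ref{prop: ef pairs determine each other} to get the corresponding embeddings for free. Concretely, given ef-pairs $(e,f) : (A,T_A) \to (B,T_B)$ and $(e',f') : (A,T_A) \to (B',T_{B'})$ in $\Syseffin$, I would define
\begin{align*}
C &:= \{ (b,b') \in B \times B' : f(b) = f'(b') \}, \\
T_C(b,b') &:= \{ (b_1, b_1') \in C : b \xrightarrow{T_B} b_1 \text{ and } b' \xrightarrow{T_{B'}} b_1' \},
\end{align*}
with the obvious projections $h : C \to B$ and $h' : C \to B'$. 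Since $B$ and $B'$ are finite, so is $C$, and the topological/continuity conditions in definition~\ref{def: dynamical system} are automatic.

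The first substantive step is to check that $(C,T_C)$ is nontrivial, so that it lies in $\Syseffin$. Since $A$ is nontrivial, pick $a \xrightarrow{T_A} a_1$; use the back property of the factor $f$ to lift this to $b \xrightarrow{T_B} b_1$ with $f(b)=a$, $f(b_1)=a_1$, and similarly use $f'$ to lift to $b' \xrightarrow{T_{B'}} b_1'$. Then $(b,b'), (b_1,b_1') \in C$ and $(b,b') \xrightarrow{T_C} (b_1,b_1')$.

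The second, and I expect main, step is to verify that $h$ (and symmetrically $h'$) is a factor in the sense of definition~\ref{def: system morphism}. Surjectivity of $h$ follows from surjectivity of $f'$ (given $b \in B$, pick any $b' \in (f')^{-1}(f(b))$). Clause~\ref{itm: nondet equivariance factor} is built into the definition of $T_C$. The delicate clause is the back condition~\ref{itm: nondet equivariance hom back}: given $b \xrightarrow{T_B} b_1$ and $(b,b') \in C$, I need a companion $b_1' \in B'$ with $b' \xrightarrow{T_{B'}} b_1'$ and $f'(b_1') = f(b_1)$ so that $(b_1,b_1') \in C$ and the transition sits in $T_C$. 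This is precisely where the factor property of $f'$ is used: $f(b) \xrightarrow{T_A} f(b_1)$ and $f'(b') = f(b)$, so the factor $f'$ produces the required $b_1'$. This is the only step that uses more than surjectivity of $f$ and $f'$, and it is the conceptual heart of the argument.

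Finally, proposition~\ref{prop: ef pairs determine each other} supplies unique embeddings $g := \underline{h}$ and $g' := \underline{h'}$ making $(g,h) : (B,T_B) \to (C,T_C)$ and $(g',h') : (B',T_{B'}) \to (C,T_C)$ ef-pairs in $\Syseffin$. Commutativity of the amalgamation square reduces, by the factor/embedding correspondence, to checking $f \circ h = f' \circ h'$, which holds by the very definition of $C$. Hence $\Syseffin$ has the amalgamation property.
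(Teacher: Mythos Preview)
Your construction is exactly the paper's: the pullback $C$ with coordinatewise dynamics, projections as factors, and proposition~\ref{prop: ef pairs determine each other} for the embeddings. The nontriviality check and the verification of surjectivity and clause~\ref{itm: nondet equivariance factor} are fine.

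There is, however, a genuine slip in your verification of the back condition~\ref{itm: nondet equivariance hom back} for $h$. You phrase it as: given $b \xrightarrow{T_B} b_1$ \emph{and} a fixed $(b,b') \in C$, find $b_1'$ with $b' \xrightarrow{T_{B'}} b_1'$ and $f'(b_1') = f(b_1)$, and you claim ``the factor $f'$ produces the required $b_1'$''. But neither clause~\ref{itm: nondet equivariance hom back} nor clause~\ref{itm: nondet equivariance factor} of definition~\ref{def: system morphism} gives you this lift at a \emph{prescribed} preimage $b'$. A factor $f'$ need not have that p-morphism-style lifting property: take $B' = \{b',c',d'\}$ with the single edge $c' \xrightarrow{T_{B'}} d'$, $A = \{a,a_1\}$ with $a \xrightarrow{T_A} a_1$, and $f'(b') = f'(c') = a$, $f'(d') = a_1$. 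Then $f'$ is a factor, yet $b'$ has no successor at all.

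The fix is simply not to fix $b'$ in advance. The back condition for $h$ only asks for \emph{some} preimages of $b$ and $b_1$ in $C$ connected by a $T_C$-edge. So: from $b \xrightarrow{T_B} b_1$ and clause~\ref{itm: nondet equivariance factor} for $f$ obtain $f(b) \xrightarrow{T_A} f(b_1)$; then apply clause~\ref{itm: nondet equivariance hom back} for $f'$ to this $A$-edge to get $b', b_1' \in B'$ with $f'(b') = f(b)$, $f'(b_1') = f(b_1)$, and $b' \xrightarrow{T_{B'}} b_1'$. Now $(b,b'),(b_1,b_1') \in C$ and $(b,b') \xrightarrow{T_C} (b_1,b_1')$, as required. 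With this correction your proof coincides with the paper's.
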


\begin{proof}
Let $(X,T)$, $(Y_0,S_0)$, and $(Y_1,S_1)$ be in $\Syseffin$ and let
$(e_0,f_0) : (X,T) \to (Y_0,S_0)$
and
$(e_1,f_1) : (X,T) \to (Y_1,S_1)$
be morphisms in $\Syseffin$.
Consider
\begin{align*}
Z := \big\{ 
(y_0,y_1) \in Y_0 \times Y_1 
: 
f_0 (y_0) = f_1 (y_1) 
\big\},
\end{align*}
Define $R : Z \rightrightarrows Z$ by
\begin{align*}
R (y_0 , y_1) 
:=
\big\{
(y_0' , y_1') \in Z 
: 
y_0' \in S_0 (y_0) \text{ and } y_1' \in S_1 (y_1) 
\big\}.
\end{align*}
So $(Z,R)$ is a finite dynamical system, and it is nontrivial since $(X,T)$ is nontrivial and $f_0$ and $f_1$ are factors.
It is readily checked that the projections $\pi_0 : Z \to Y_0$ and $\pi_1 : Z \to Y_1$ to the first and second component, respectively, are factors. Then 
$(\underline{\pi_0} , \pi_0) : (Y_0,S_0) \to (Z , R)$
and
$(\underline{\pi_1} , \pi_1) : (Y_1,S_1) \to (Z , R)$
are morphisms in $\Syseffin$
and, by construction,
$f_0 \circ \pi_0 = f_1 \circ \pi_1$, 
so 
$(\underline{\pi_0} , \pi_0) \circ (e_0 , f_0)
=
(\underline{\pi_1} , \pi_1) \circ (e_1 , f_1)
$, as needed.\footnote{This proof does not carry over to total systems: The dynamics $R$ is nontrivial, but it need not be total, even if the systems to be amalgamated are. In section~\ref{ssec: no universal deterministic system}, we show that in fact no other construction can work, because the total systems do not have a universal system.}
\end{proof}

Regarding the joint embedding property, it is well-known that this is implied by the amalgamation property if there is an initial object. 

\begin{proposition}
\label{prop: syseffin has joint embedding property}
$\Sysef$ has an initial object. Since $\Syseffin$ has the amalgamation property, this implies that it has the joint embedding property.
\end{proposition}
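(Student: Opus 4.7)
The plan is to exhibit an explicit initial object, verify that it sits inside $\Syseffin$, and then deploy the standard categorical argument that, in any category with an initial object, the amalgamation property upgrades to the joint embedding property.

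For the initial object, my candidate is the one-point system with a loop: $I := (\{*\}, T_I)$ where $T_I(*) := \{*\}$. This is manifestly a compact zero-dimensional Polish space equipped with a closed-valued upper-hemicontinuous multifunction, and it is nontrivial since $* \xrightarrow{T_I} *$, so it lies in $\Sysef$. To see that it is initial, let $(X,T)$ be any object of $\Sysef$. The only candidate factor $f: X \to \{*\}$ is the constant function. It is continuous and surjective (as $X$ is nonempty), and the required two-sided matching of dynamics is immediate: any $T$-step in $X$ maps to the unique loop $* \to *$, and the loop $* \to *$ is witnessed in $X$ by any $T$-edge, which exists precisely because $(X,T)$ is nontrivial. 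Hence $f$ is a factor, so by Proposition~\ref{prop: ef pairs determine each other} there is a unique ef-pair $(\underline{f},f) : I \to (X,T)$ (concretely, $\underline{f}(*) = X$). Uniqueness of the ef-pair is forced because the ef-pair is determined by its factor component and there is only one function $X \to \{*\}$.

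Next I note that $I$ has a finite state space, so by Proposition~\ref{prop: category finite iff system finite} it is a category-theoretically finite object, i.e., $I \in \Syseffin$. Now the joint embedding property is a standard diagram chase. Given $(Y_0, S_0)$ and $(Y_1, S_1)$ in $\Syseffin$, apply initiality to obtain morphisms $\iota_0 : I \to (Y_0,S_0)$ and $\iota_1 : I \to (Y_1, S_1)$ in $\Syseffin$. By Proposition~\ref{prop: syseffin has amalgamation property}, there exist $(Z,R) \in \Syseffin$ and morphisms $g_0 : (Y_0,S_0) \to (Z,R)$, $g_1 : (Y_1, S_1) \to (Z,R)$ with $g_0 \circ \iota_0 = g_1 \circ \iota_1$; in particular, $g_0$ and $g_1$ witness joint embedding.

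I do not expect any real obstacle here; the only technical care needed is to confirm that the constant function $X \to \{*\}$ satisfies both the forth- and back-clauses of the factor definition, and that the induced embedding $\underline{f}(*) = X$ meets the embedding axioms (closed-valued, total, partition-injective with a single block, upper-hemicontinuous, and the forth-embedding square, all of which collapse to trivialities because the codomain side has a single looped state). The rest is purely formal.
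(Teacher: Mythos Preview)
Your proof is correct and follows essentially the same approach as the paper: both exhibit the one-point self-loop system as the initial object via the constant factor map, and then invoke the standard fact that an initial object together with amalgamation yields joint embedding. Your version is slightly more detailed (you explicitly note that the initial object lies in $\Syseffin$ and spell out the amalgamation-to-joint-embedding diagram chase), but the core argument is identical.
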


\begin{proof}
The initial object is the system $(X,T)$ consisting of one state $*$ with a self-loop, i.e., $T(*) = \{ * \}$. Given any nontrivial system $(Y,S)$, the unique ef-pair $(e,f) : (X,T) \to (Y,S)$ is given by $f$ mapping every $y \in Y$ to $*$, which is readily checked to be a factor.
\end{proof}

This finishes the proof of theorem~\ref{thm: universal nondeterministic system}. As noted, the theorem generalizes the result for profinite undirected graphs. More precisely:

\begin{remark}
\label{rmk: generalization of universal profinite undirected graph}
As mentioned in section~\ref{sec: literature review}, from a dynamical systems perspective, we consider \emph{directed} edges $x \xrightarrow{T} x'$, rather than \emph{undirected} (i.e., reflexive and symmetric) edges. But, since theorem~\ref{thm: algebroidal category of systems} applies to any subcategory of $\Sysef$, we can, as mentioned in section~\ref{sec: literature review}, also recover the existence result of the projectively universal homogeneous (undirected) graph~\cite{Irwin2006, Camerlo2010, Geschke2022}. Specifically, call a system $(X,T)$ in $\Sysef$ a \emph{graph} if, for all $x, x' \in X$, we have $x \xrightarrow{T} x$ (reflexivity) and, if $x \xrightarrow{T} x'$, then $x' \xrightarrow{T} x$ (symmetry).
Let $\mathsf{G}^\mathsf{ef}$ be the full subcategory of $\Sysef$ consisting of those systems that are graphs. Then conditions~\ref{itm: algebroidal limit closure} and~\ref{itm: algebroidal factor closure} of theorem~\ref{thm: algebroidal category of systems} are readily checked, so $\mathsf{G}^\mathsf{ef}$ is algebroidal and the full subcategory $\mathsf{G}_\mathsf{fin}^\mathsf{ef}$ of category-theoretically finite objects consists of precisely the finite graphs. The proofs of propositions~\ref{prop: syseffin has amalgamation property} and~\ref{prop: syseffin has joint embedding property} still work and yield the amalgamation property and the joint embedding property for $\mathsf{G}_\mathsf{fin}^\mathsf{ef}$. Hence the \Fraisse{} limit theorem yields that $\mathsf{G}^\mathsf{ef}$ has a $\mathsf{G}^\mathsf{ef}$-universal and $\mathsf{G}^\mathsf{ef}_\mathsf{fin}$-homogeneous system. 
\end{remark}

\subsection{The nature of the universal system}
\label{ssec: the nature of the universal system}

We end this section with some comments about the nature of the universal nondeterministic system $(U,T)$: its symmetry, universality, complexity, and its status as an analog computer.

\emph{Symmetry}.
The homogeneity property, that makes $(U,T)$ unique, is quite remarkable. In section~\ref{ssec: statement of the main results}, we motivated it as saying that `every part of $(U,T)$ looks the same'. Another way of saying this is that $(U,T)$ is `maximally symmetric':
whenever we have two ways of embedding a finite dynamical system into the universal system, there is a symmetry (i.e. isomorphism) of the universal system that transforms one embedding into the other. Thus, the embedding of each finite system into the universal one is, in a sense, unique up to isomorphism.

\emph{Universality}. 
In which of the senses that we have identified in section~\ref{sec: literature review} is $(U,T)$ universal?
\begin{itemize}
\item[(\ref{itm: notion of universality factor})]
By construction, $(U,T)$ is factor universal: every dynamical system is a factor of $(U,T)$.

\item[(\ref{itm: notion of universality embedding})]
By construction, $(U,T)$ also is embedding universal, provided that `embedding' is understood as a multi-valued function. This sense of `embedding' comes natural in a multi-valued setting, but it is not the usual `injective function' idea of an embedding.

\item[(\ref{itm: notion of universality Turing})]
Since every Turing machine can be seen as a dynamical system, it is a factor of and embeds into the universal system. Such an ef-pair can be seen as a simulation (see remark~\ref{rmk: simulation and abstraction}). Thus, in this sense, $(U,T)$ is Turing universal.

\item[(\ref{itm: notion of universality approximation})]
Whether $(U,T)$ is approximation universal depends on the understanding of `approximation'. Simulation understood as an ef-pair is, in a sense, best possible approximation, so in this sense $(U,T)$ is approximation universal. But a sense of a universal function approximator (as with neural networks or differential equations) is not available in our general setting here, as there are no associated real-valued functions that can approximate a given function.

\end{itemize}

\emph{Complexity}.
One would expect the universal system to be a very `complicated' or `complex' object since it contains every other system. However, it actually is surprisingly simple, as we will show now: Its state space is a Cantor space (i.e., isomorphic to $2^\omega$), and its dynamics has no complicated long-term behavior because the orbits consist of at most two states. The latter, surprising fact is known for the universal profinite graph, proven by \textcite[prop.~12]{Camerlo2010} (also discussed by~\cite{Geschke2022}). However, in our systems setting, a much simpler proof is available.

First, the claim about the state space \cite[cf.][13]{Geschke2022}.

\begin{proposition}
The state space of the universal system $(U,T)$ from theorem~\ref{thm: universal nondeterministic system} is a Cantor space (i.e., isomorphic to $2^\omega$).
\end{proposition}

\begin{proof}
By Brouwer's theorem~\cite[e.g.][thm.~7.4]{Kechris1995}, a space is a Cantor space if it is a nonempty, compact, zero-dimensional, metrizable space without isolated points (i.e., no points whose singleton is open). 
Since $(U,T)$ is in $\Sysef$, it hence remains to show that $U$ has no isolated points. 

Toward a contradiction, assume $x_0 \in U$ is isolated. Consider the two-element discrete space $Y = \{ y_0, y_1\}$ and the function $f : U \to Y$ which maps $x_0$ to $y_0$ and any $x \in U \setminus \{ x_0 \}$ to $y_1$---so $f$ is a continuous function. Define the dynamics $S$ on $Y$ by: $y \xrightarrow{S} y'$ iff there is $x,x' \in U$ with $f(x) = y$, $f(x') = y'$, and $x \xrightarrow{T} x'$. So $f : (U,T) \to (Y,S)$ is a factor. Now define the system $(Z,R)$ by `doubling' $y_0$: let $Z := \{ y_0, z_0, y_1 \}$ be a three-element discrete space and let $R(z) := S(z)$ if $z \in \{y_0,y_1\}$ and $R(z_0) := \emptyset$. Define $g : Z \to Y$ by mapping $y_0 \mapsto y_0$, $z_0 \mapsto y_0$, and $y_1 \mapsto y_1$. So $g : (Z,R) \to (Y,S)$ is a factor. By universality, there is a factor $h : (U,T) \to (Z,R)$, and, by homogeneity, we can choose $h$ such that $g \circ h = f$.\footnote{This is the idea of $(U,T)$ being a `saturated object', which we will define and use again in the proof of proposition~\ref{prop: isomorphism for universal proshifts}.} 
Hence, since $g ( h(x_0) ) = f(x_0) = y_0$ and $g^{-1}(y_0) = \{ y_0, z_0\}$, we must have either $h(x_0) = y_0$ or $h(x_0) = z_0$. Let's consider the first case (the other is analogous). By surjectivity, there is $x \in U$ with $h(x) = z_0$. But then $x \in U \setminus \{ x_0 \}$, so $y_1 = f(x) = g ( h(x) ) = g (z_0) = y_0$, contradiction. 
\end{proof}

Now, the claim about the dynamics. Let us first fix some terminology. Let $(X,T)$ be a system. An \emph{orbit} in $(X,T)$ is a sequence $(x_n)_{n = 0}^\kappa$ in $X$  with $\kappa \leq \omega$ and $x_n \xrightarrow{T} x_{n+1}$ for all $n < \kappa$. So the orbit is finite if $\kappa < \omega$ or infinite if $\kappa = \omega$. 
If $x \xrightarrow{T} y$, we call $y$ a \emph{successor state} of state $x$. We call a state $x \in X$ a \emph{dead end} (or \emph{sink}) if $x$ has no successor states.
A \emph{loop} in $(X,T)$ is a finite orbit $(x_0, \ldots , x_\kappa)$ with $x_0 = x_\kappa$ and $\kappa \geq 1$. It is a \emph{self-loop} if $\kappa = 1$.

\begin{proposition} 
Orbits in the universal system $(U,T)$ have at most $2$ states. In other words, for every state $x \in U$, either $x$ is a dead end or all its successor states are dead ends. In particular, $(U,T)$ does not have any loops (also no self-loops).
\end{proposition}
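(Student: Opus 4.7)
My strategy is to exploit universality in its sharpest form: produce a factor of $U$ onto a tiny two-state system in which the image of the middle vertex of any hypothetical 3-chain $x \to y \to z$ would be forced to be simultaneously a source and a target of an edge---which is impossible.

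First, I set up the target. Let $A = (\{a, b\}, T_A)$ with $T_A(a) = \{b\}$ and $T_A(b) = \emptyset$. Since $A$ is a nontrivial finite dynamical system---its only edge is $a \to b$---it is an object of $\Sysef$. By the universality part of corollary~\ref{cor: universal nondeterministic system}, there is an ef-pair $A \to (U, T)$, and hence a factor $h : (U, T) \to A$.

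Second, suppose for contradiction that there exist $x, y, z \in U$ with $x \xrightarrow{T} y \xrightarrow{T} z$. Because $h$ is a factor, it preserves edges in the forward direction (clause~\ref{itm: nondet equivariance factor} of definition~\ref{def: system morphism}), so $h(x) \xrightarrow{T_A} h(y) \xrightarrow{T_A} h(z)$ in $A$. Every edge in $A$ has source $a$ and target $b$; thus $h(x) \to h(y)$ forces $h(y) = b$, while $h(y) \to h(z)$ forces $h(y) = a$---a contradiction. This establishes the main claim: any $y \in U$ with a predecessor has no successor, so every orbit has at most two terms.

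Third, the remaining assertions follow immediately. A self-loop $x \to x$ would yield the forbidden 3-chain $x \to x \to x$, and any loop $(x_0, x_1, \ldots, x_\kappa)$ with $\kappa \geq 2$ already contains the forbidden 3-chain $x_0 \to x_1 \to x_2$. I do not anticipate a genuine obstacle in executing this plan: the only nonobvious step is recognizing that the minimal target $A$ already suffices, and once that choice is made the contradiction is forced by factor-functoriality---which is presumably why the author remarks that the argument is much simpler here than in the profinite graph setting of \textcite{Camerlo2010}.
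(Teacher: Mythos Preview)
Your proof is correct and follows essentially the same approach as the paper: both use the two-state system with a single edge $a \to b$ as the factor target and derive a contradiction from any 3-chain in $(U,T)$. The only cosmetic difference is in how the contradiction is phrased---the paper argues that both $h(x_0)$ and $h(x_1)$ must equal the source vertex (yielding a forbidden self-loop there), whereas you argue that $h(y)$ must simultaneously be source and target---but the underlying idea is identical.
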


\begin{proof}
Toward a contradiction, assume there was an orbit with three states: $x_0 \xrightarrow{T} x_1 \xrightarrow{T} x_2$.
Consider the system $(Y,S)$ with $Y = \{y_0,y_1\}$ and $S(y_0) = \{y_1\}$ and $S(y_1) = \emptyset$. 
It is a non-trivial system and hence is in $\Sysef$. So there is a factor $f : (U,T) \to (Y,S)$.
We cannot have $f(x_0) = y_1$: Otherwise, since $f$ is a factor and $x_0 \xrightarrow{T} x_1$, there must be $y' \in Y$ with $f(x_1) = y'$ and $y_1 \xrightarrow{S} y'$, but $y_1$ is a dead end.
Hence $f(x_0) = y_0$. 
Similarly, we cannot have $f(x_1) = y_1$.
Hence $f(x_1) = y_0$. 
But then, since $x_0 \xrightarrow{T} x_1$ and $f$ is a factor, we have $f(x_0) \xrightarrow{S} f(x_1)$, and hence $y_0 \xrightarrow{S} y_0$, contradiction. 
\end{proof}

\emph{Analog computer?} 
In section~\ref{sec: literature review}, we said that every analog computer can be seen as a dynamical system. But we also noted that the converse is a difficult philosophical question. Accordingly, it is out of scope of this paper to discuss whether the universal system $(U,T)$ is an analog computer, and hence is a universal analog computer. We only note that the simple nature of $(U,T)$ seems to intuitively speak in favor of $(U,T)$ being an analog computer---if it were an extremely intricate and complicated system, it would seem less plausible to view it as a `computing machine'. We pick this up again as an interesting open question in section~\ref{sec: conclusion}.

\section{Universality for deterministic systems?}
\label{sec: universality for deterministic systems}

We have established the existence of a universal nondeterministic system, which even is unique in also being homogenous. Now, for the remainder of this paper, we consider the special case of deterministic systems. After all, they are more prominently researched than nondeterministic systems. One might expect similar universality results, since there are known ways to transform a nondeterministic system into a deterministic one, which we review in section~\ref{ssec: deterministic vs nondeterministic systems}. However, the connection is not close enough: in section~\ref{ssec: no universal deterministic system}, we see that there cannot be a universal deterministic system. But we are very close to universality: in section~\ref{ssec: category of det systems still semi algebroidal}, we show that the category $\detSysef$ still is semi-algebroidal, with category-theoretically finite objects being shifts, which have the joint embedding property and the amalgamation property. So all that is missing for universality is the countability of the category-theoretically finite objects. In the next section, we hence consider how to get subcategories of $\detSysef$ with universal systems.

\subsection{Deterministic vs nondeterministic systems}
\label{ssec: deterministic vs nondeterministic systems}

There are two standard ways to transform a nondeterministic system into a deterministic one. Conceptually, these two ways are reminiscent of a right and a left adjoint to the inclusion $\Inc : \detSysef \hookrightarrow \Sysef$, but they do \emph{not} formally satisfy all the requirements for an adjunction.

The first way (cf.\ `right adjoint' to $\Inc$) is reminiscent of how the powerset functor is right adjoint to the inclusion of the category of sets and functions in the category of sets and relations.
Given a nondeterministic system $(X,T)$, we already saw in section~\ref{sec: coalgebra and domain theory} that we can consider the `deterministic system' $(\F (X) , \F (T) )$, where $\F (X)$ is the set of closed subsets of $X$ and $\F (T) : \F (X) \to \F (X)$ maps $A$ to $T[A]$. But the problem was how to topologize $\F(X)$: if we use the Vietoris topology, $\F(X)$ is again a second-countable Stone space but $\F(T)$ need not be continuous; if we use the upper Vietoris topology, $\F(T)$ is continuous but then $X$ in general only is a spectral space. Moreover, the natural choice for a universal morphism 
$\epsilon_{(X,T)} : (\F (X) , \F (T) ) \to (X,T)$
is a multifunction $e$ mapping $A \in \F(X)$ to $A \subseteq X$ or, in the other direction, a function $f$ mapping $x \in X$ to $\{x\} \in \F(X)$. However, these do not form an ef-pair (e.g., $f$ is not surjective).

The second way (cf.\ `left adjoint' to $\Inc$) is reminiscent of the unraveling functor for labeled transition systems (or Kripke frames).\footnote{See, e.g., \parencite[30]{Winskel1995} or \parencite[220]{Blackburn2001}.} 
Given a nondeterministic system $(X,T)$, define $\overline{X}$ to be the set of all infinite orbits. To recall, an infinite orbit---which, for brevity, we call \emph{path}---is a sequence $\overline{x} = (x_n)_{n \in \omega}$ in $X$ such that, for all $n \in \omega$, we have $x_n \xrightarrow{T} x_{n+1}$. Hence $\overline{X}$ is a subset of $\prod_\omega X$ and we equip it with the subset topology. Define $\sigma(\overline{x})$ as the shift of $\overline{x}$, i.e., the sequence $(x_{n+1})_{n \in \omega}$, which again is a path. 
Write $\Path (X,T) := ( \overline{X} , \sigma )$.
Moreover, the natural choice for a universal morphism 
$\eta_{(X,T)} : (X,T) \to \Path (X,T)$
is the ef-pair whose factor $f : \overline{X} \to X$ projects to the first component, i.e., $f \big( (x_n) \big) := x_0$.
The next proposition shows that, this is all well-defined for total nondeterministic systems. (The straightforward proof is in appendix~\ref{app: proofs from section universality for deterministic systems}.)

\begin{proposition}
\label{prop: path system}
Let $(X,T)$ be a system in $\Sysef$. Then 
\begin{enumerate}
\item
\label{lem: path system 1}
$\Path (X,T)$ is in $\detSysef$ iff $(X,T)$ has an infinite path (which, in particular, is the case if $(X,T)$ is total).

\item
\label{lem: path system 2}
The projection to the first component $\pi_0 : \Path (X,T) \to (X,T)$ is a factor iff $(X,T)$ is total.
\end{enumerate}
\end{proposition}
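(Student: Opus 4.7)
The plan is to first verify the topological and dynamical properties of $\Path(X,T) = (\overline{X}, \sigma)$ independently of any extra assumption on $(X,T)$, and then read off both claims.

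First I would check that $\overline{X}$ is closed in $\prod_\omega X$: by the closed graph theorem, $\Gr(T) \subseteq X \times X$ is closed, and for each $n \in \omega$ the set of sequences $(x_m)_m$ with $(x_n, x_{n+1}) \in \Gr(T)$ is the preimage of $\Gr(T)$ under the continuous projection onto the $n$-th and $(n+1)$-st coordinates, hence closed; and $\overline{X}$ is the intersection of these closed sets. Since $\prod_\omega X$ is compact, zero-dimensional, second-countable and Hausdorff (Tychonoff plus inheritance from $X$), so is the closed subspace $\overline{X}$. The shift $\sigma$ is a single-valued function (hence closed-valued), and it is continuous because each of its coordinate projections is a coordinate projection of the product. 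Thus $(\overline{X}, \sigma)$ satisfies the structural clauses of Definition~\ref{def: dynamical system} and is deterministic by construction.

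For part (1), membership in $\detSysef$ now requires only that the system be nontrivial, i.e.\ $\Gr(\sigma) \neq \emptyset$. Since $\sigma$ is a total function, this reduces to $\overline{X} \neq \emptyset$, which is by definition the existence of an infinite path in $(X,T)$. For the parenthetical, if $T$ is total, then using nontriviality of $(X,T)$ I would pick any $x_0 \in X$ and inductively choose $x_{n+1} \in T(x_n)$, yielding an infinite path.

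For part (2), $\pi_0 : \overline{X} \to X$ is continuous as the restriction of a coordinate projection, so I would apply the simplified characterisation of factors from Section~\ref{sec: categories of dynamical systems}: $\pi_0$ is a factor iff it is a continuous surjection such that $x \xrightarrow{T} x'$ holds iff there exist $\overline{x}, \overline{x}' \in \overline{X}$ with $\pi_0(\overline{x}) = x$, $\pi_0(\overline{x}') = x'$ and $\sigma(\overline{x}) = \overline{x}'$. The $(\Leftarrow)$ direction is immediate from the definition of a path. The $(\Rightarrow)$ direction asks for an extension of $(x,x')$ to an infinite path, which is possible iff $T$ is total, by the same inductive construction as in part (1) applied from $x'$. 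Finally, surjectivity of $\pi_0$ is itself equivalent to totality of $T$: totality yields a path from any $x$ by induction, and conversely any path starting at $x$ witnesses $T(x) \neq \emptyset$, while iterating along the path witnesses $T(x') \neq \emptyset$ for every successor, and so on.

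The main obstacle I anticipate is keeping the notational bookkeeping clean between the subspace topology on $\overline{X}$ and the coordinate projections of $\prod_\omega X$; the genuinely substantive point is the closedness of $\overline{X}$, which rests entirely on the closed graph theorem. Everything else is direct unpacking of definitions.
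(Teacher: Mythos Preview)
Your proof is correct and follows essentially the same route as the paper's: show $\overline{X}$ is closed in $\prod_\omega X$ via the closed graph theorem, conclude the topological properties, and then unpack the definitions for both parts (the paper checks the equivariance clauses~\ref{itm: nondet equivariance hom back} and~\ref{itm: nondet equivariance factor} directly rather than using the biconditional characterisation, but this is the same content). One small inaccuracy: your claim that ``extension of $(x,x')$ to an infinite path is possible iff $T$ is total'' is false in the $\Leftarrow$ direction (a state with no predecessors and empty $T$-image is a counterexample), but your argument is unaffected since you separately establish that surjectivity of $\pi_0$ is equivalent to totality, and that is what actually carries the converse of part~(2).
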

However, as we will see in corollary~\ref{cor: inclusion of det sys in nondet sys cannot have left adjoint} below, neither $\Path$ nor any other functor can be a left adjoint to the inclusion $\Inc : \detSysef \hookrightarrow \Sysef$.

\subsection{No universal deterministic system}
\label{ssec: no universal deterministic system}

Despite the close connections between deterministic systems and nondeterministic systems, the former cannot have a universal system, in contrast to the latter.

\begin{proposition}
\label{prop: no universal deterministic system}
The category $\detSysef$ does not have a universal system. 
\end{proposition}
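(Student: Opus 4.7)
The plan is a cardinality argument: for any $(U,T) \in \detSysef$, the collection of factors from $(U,T)$ onto shift spaces is countable, yet there are uncountably many pairwise non-isomorphic shift spaces in $\detSysef$. Hence no $(U,T)$ can factor onto all of them, contradicting universality.

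For the first step, fix $(U,T) \in \detSysef$ and suppose $f : (U,T) \to (X,\sigma)$ is a factor, where $X \subseteq A^\omega$ is a subshift over a finite alphabet $A$. Equivariance forces $f(u)_n = f(T^n(u))_0$ for all $u \in U$ and $n \in \omega$, so $f$ is completely determined by its ``$0$-th coordinate'' map $\phi_f : U \to A$, $\phi_f(u) := f(u)_0$. Such a continuous map into a finite discrete space is in turn determined by the finite clopen partition $\{\phi_f^{-1}(a) : a \in A\}$ of $U$. Since $U$ is second-countable and zero-dimensional, its Boolean algebra of clopens is countable, hence there are only countably many finite clopen partitions of $U$. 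Summing over the countable collection of finite alphabets yields at most countably many factors from $(U,T)$ onto shift spaces.

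For the second step, I would invoke a standard fact from symbolic dynamics: the class of shift spaces in $\detSysef$ contains uncountably many pairwise non-conjugate systems. A concrete witness is the family of Sturmian subshifts $(X_\alpha,\sigma)$ indexed by irrational $\alpha \in (0,1)$, which are pairwise non-isomorphic (alternatively, one may take a family of subshifts with pairwise distinct topological entropies).

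Combining: if $(U,T)$ were $\detSysef$-universal, then since every shift space belongs to $\detSysef$, there would exist uncountably many pairwise non-isomorphic shift-space factors of $(U,T)$, in violation of the countable bound from the first step.

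The main obstacle is just making the reduction in the first step airtight---in particular, verifying that the ``$0$-th-coordinate'' assignment $f \mapsto \phi_f$ is injective and that its image really does parametrize all such factors. The symbolic-dynamics fact invoked in the second step is standard and can simply be cited. Note that this argument hinges crucially on the finiteness of the alphabet (which collapses continuous maps into finite clopen partitions); it is precisely this finiteness that fails for factors onto arbitrary systems in $\detSysef$, as the proof of proposition~\ref{prop: syseffin has amalgamation property} (via the fiber product) already suggested no universal amalgam is available in the total/deterministic setting.
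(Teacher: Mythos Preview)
Your argument is correct and takes a genuinely different route from the paper's proof. The paper invokes two results of Darji--Matheron: first, that every compact metric system is a factor of some Cantor system, and second, that no metrizable compact system can factor onto all irrational circle rotations. Combining these, a universal object in $\detSysef$ would be universal for \emph{all} compact metric systems, hence non-metrizable---contradiction. The obstruction thus lives outside the zero-dimensional category.

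Your proof, by contrast, stays entirely inside $\detSysef$: you observe that any factor from $(U,T)$ onto a subshift is determined by a continuous map $U \to A$ into a finite discrete set, hence by a labeled finite clopen partition of $U$, of which there are only countably many (since the clopen algebra of a second-countable Stone space is countable). Against this you play the standard uncountability of isomorphism classes of subshifts. This is more elementary and self-contained, and it dovetails nicely with the paper's later diagnosis (section~\ref{ssec: category of det systems still semi algebroidal}) that the failure of algebroidality for $\detSysef$ is precisely the uncountability of shifts: your argument shows directly that no single object can have all shifts in its ``age''. One small wrinkle to tighten: a continuous map $\phi: U \to A$ is determined by the \emph{labeled} partition $(\phi^{-1}(a))_{a \in A}$, not the bare partition; but since each finite clopen partition admits only finitely many labelings, the count remains countable. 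The injectivity of $f \mapsto \phi_f$ is immediate from $f(u)_n = \phi_f(T^n u)$, and the cited uncountability of non-conjugate subshifts (via Sturmian systems or entropy) is indeed standard.
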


\begin{proof}
The claim follows from two results by~\textcite{Darji2017}.
For the purposes of this proof, let us call $(X,T)$ a compact (metric) system if $X$ is a compact (metric) space and $T : X \to X$ a continuous function. A Cantor system is a pair $(2^\omega , S)$ where $2^\omega$ is the Cantor space and $S: 2^\omega \to 2^\omega$ a continuous function. 
\begin{enumerate}
\item
\label{itm: Darji Matheron 2017 1}
\cite[Corollary~4.2]{Darji2017}: 
If $(X,T)$ is a compact metric system, then there is a Cantor system $(2^\omega , S)$ and a factor $f : (2^\omega , S) \to (X,T)$. (This extends the universality of Cantor space from spaces to systems.) 

\item
\label{itm: Darji Matheron 2017 2}
\cite[Remark~4.6]{Darji2017}: 
Assume $Z$ is a compact system that is universal for compact metric systems, i.e., if $(X,T)$ is a compact metric system, there is a factor $f : (Z,R) \to (X,T)$. Then $Z$ cannot be metrizable. (In fact, $(Z,R)$ cannot even factor onto all rotations of the circle.)
\end{enumerate}
Toward a contradiction, assume $(Z,R)$ is universal in $\detSysef$. 
Then $(Z,R)$ is a compact system, and it is universal for compact metric systems (not just zero-dimensional ones): If $(X,T)$ is a compact metric system, then, by~\ref{itm: Darji Matheron 2017 1}, there is a Cantor system $(2^\omega , S)$ and a factor $f : (2^\omega , S) \to (X,T)$.   
Hence $(2^\omega , S)$ is in $\detSysef$, so, by universality, there is a factor $g : (Z,R) \to (2^\omega , S)$. In sum, there is a factor $f \circ g : (Z,R) \to (X ,T)$, as needed.
However, then~\ref{itm: Darji Matheron 2017 2} implies that $Z$ is not metrizable, in contradiction to $Z$ being a Polish space.
\end{proof}

The nonexistence of a universal system in $\detSysef$ can also be extended to some categories of nondeterministic systems, namely $\tSysef$. In theorem~\ref{thm: algebroidal category of systems}, we have seen that this category is algebroidal. But, as we see now, $\tSysef$ cannot have a universal system. In particular, the finite total nondeterministic systems also cannot have the amalgamation property.

\begin{corollary}
\label{cor: cat of total system cannot have universal system}
The category $\tSysef$ does not have a universal system.
\end{corollary}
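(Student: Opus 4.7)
The plan is to derive a contradiction from the assumption that $\tSysef$ has a universal system $(U,T)$, by using such a $(U,T)$ to build a universal deterministic system, which is forbidden by Proposition~\ref{prop: no universal deterministic system}. The bridge is the path construction $\Path$ of subsection~\ref{ssec: deterministic vs nondeterministic systems}.

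The key observation is the inclusion $\detSysef \subseteq \tSysef$: every deterministic (nontrivial) system has every state send to a unique successor state and is therefore total. Hence, if $(U,T)$ is a universal object in $\tSysef$, then for every deterministic system $(Y,S)$ there is an ef-pair $(e,f):(Y,S)\to (U,T)$ in $\tSysef$; in particular there is a factor $f:(U,T)\to (Y,S)$. The obstacle is that $(U,T)$ itself need not be deterministic, so it cannot directly serve as the universal object in $\detSysef$.

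I would resolve this by passing to $\Path(U,T) = (\overline{U}, \sigma)$. Since $(U,T)\in\tSysef$ is total, Proposition~\ref{prop: path system}~(\ref{lem: path system 1}) gives that $\Path(U,T)$ is in $\detSysef$, and Proposition~\ref{prop: path system}~(\ref{lem: path system 2}) gives that the first-component projection $\pi_0:\Path(U,T)\to (U,T)$ is a factor. Nontriviality of $\Path(U,T)$ follows because, $(U,T)$ being both nontrivial and total, one can iteratively pick successors to construct an infinite path $\overline{x}\in \overline{U}$, whence $\overline{x}\xrightarrow{\sigma}\sigma(\overline{x})$ is a nontrivial edge.

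Now, given any $(Y,S)\in \detSysef$, I apply universality of $(U,T)$ in $\tSysef$ to obtain a factor $f:(U,T)\to (Y,S)$, and then compose to obtain the factor $f\circ\pi_0:\Path(U,T)\to (Y,S)$ using Proposition~\ref{prop: composition of system morphisms}. This exhibits $\Path(U,T)$ as a universal object in $\detSysef$, contradicting Proposition~\ref{prop: no universal deterministic system}. There is no essential technical obstacle; the main conceptual point is simply choosing the right gadget to convert a putative universal total system into a putative universal deterministic one, and $\Path$ is exactly such a gadget, with the totality hypothesis being precisely what makes $\pi_0$ a factor and $\Path(U,T)$ a deterministic system on a compact zero-dimensional Polish space.
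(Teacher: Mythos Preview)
Your proposal is correct and follows essentially the same approach as the paper: assume a universal $(U,T)$ in $\tSysef$, apply $\Path$ to obtain a deterministic system with factor $\pi_0:\Path(U,T)\to(U,T)$, and then compose with the factors supplied by universality to contradict Proposition~\ref{prop: no universal deterministic system}. The paper's proof is the same argument, just more tersely stated; your additional remarks on nontriviality are already absorbed into Proposition~\ref{prop: path system}~(\ref{lem: path system 1}).
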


\begin{proof}
Assume $(Z,R)$ were universal in $\tSysef$. By proposition~\ref{prop: path system}, we can define the deterministic system $\Path (Z,R)$ and have a factor $\pi_0 : \Path(Z,R) \to (Z,R)$. So $\Path(Z,R)$ is in $\detSysef$ and it is universal in there: if $(X,T)$ is a deterministic system, it is, in particular, in $\tSysef$, so, by assumption, there is a factor $f : (Z,R) \to (X,T)$, so there indeed is a factor $f \circ \pi_0 : \Path(Z,R) \to (X,T)$. This contradicts proposition~\ref{prop: no universal deterministic system}.
\end{proof}

Another corollary is that, despite the close connections observed in section~\ref{ssec: deterministic vs nondeterministic systems}, the deterministic systems do not form a reflective subcategory of the nondeterministic systems. In other words:

\begin{corollary}
\label{cor: inclusion of det sys in nondet sys cannot have left adjoint}
The inclusion $\Inc : \detSysef \hookrightarrow \Sysef$ does not have a left adjoint. 
\end{corollary}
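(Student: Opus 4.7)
My plan is to argue by contradiction: suppose a left adjoint $L : \Sysef \to \detSysef$ to $\Inc$ exists, and manufacture from it a universal object in $\detSysef$, contradicting Proposition~\ref{prop: no universal deterministic system}.

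The first ingredient is the standard categorical fact that when the right adjoint in an adjunction is fully faithful, the counit is a natural isomorphism. Since $\detSysef$ is a full subcategory of $\Sysef$, the inclusion $\Inc$ is fully faithful, so the counit $\epsilon : L \circ \Inc \Rightarrow \id_{\detSysef}$ of the hypothetical adjunction is a natural isomorphism. In particular, for every deterministic system $(Y,S)$, the component $\epsilon_{(Y,S)} : L(Y,S) \to (Y,S)$ is an isomorphism in $\detSysef$, with inverse $\epsilon_{(Y,S)}^{-1} : (Y,S) \to L(Y,S)$.

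The second ingredient is the universal nondeterministic system $(U,T)$ from Corollary~\ref{cor: universal nondeterministic system}. I would then consider the deterministic system $L(U,T) \in \detSysef$ and show it is universal in $\detSysef$. Given any deterministic $(Y,S)$, universality of $(U,T)$ supplies a morphism $\varphi : (Y,S) \to (U,T)$ in $\Sysef$. Applying the functor $L$ produces $L(\varphi) : L(Y,S) \to L(U,T)$ in $\detSysef$, and precomposing with $\epsilon_{(Y,S)}^{-1}$ yields a morphism $(Y,S) \to L(U,T)$ in $\detSysef$, i.e., an ef-pair, and in particular a factor $L(U,T) \to (Y,S)$. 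Hence every deterministic system would be a factor of $L(U,T)$, making $L(U,T)$ universal in $\detSysef$ and contradicting Proposition~\ref{prop: no universal deterministic system}.

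The only substantive step is the invocation of the counit-is-iso principle for fully faithful right adjoints; everything else is a direct diagram chase. I therefore expect no real obstacle: once that adjunction-theoretic fact is in hand, the universality of $(U,T)$ in $\Sysef$ transports through $L$ essentially for free, and the conclusion is immediate.
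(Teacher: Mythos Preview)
Your proposal is correct and follows essentially the same strategy as the paper: assume a left adjoint exists, apply it to the universal nondeterministic system $(U,T)$ from Corollary~\ref{cor: universal nondeterministic system}, and derive a universal object in $\detSysef$, contradicting Proposition~\ref{prop: no universal deterministic system}. The only cosmetic difference is that the paper routes the argument through the unit $\eta$ of the adjunction (composing $\Refl(\varphi)\circ\eta_{(Y,S)}$ and using fullness of $\detSysef$ in $\Sysef$), whereas you invoke the standard fact that a fully faithful right adjoint forces the counit to be an isomorphism and compose $L(\varphi)\circ\epsilon_{(Y,S)}^{-1}$; these are equivalent moves.
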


\begin{proof}
Toward a contradiction, assume there is a left adjoint (aka reflector) $\Refl : \Sysef \to \detSysef$ with unit $\eta$.
Let $(U,T)$ be the universal system in $\Sysef$ (from theorem~\ref{thm: universal nondeterministic system}). We claim that $\Refl (U,T)$ is universal in $\detSysef$, hence contradicting proposition~\ref{prop: no universal deterministic system}.
Indeed, let $(Y,S)$ be in $\detSysef$. We need to find an ef-pair $(Y,S) \to \Refl(U,T)$. By universality of $(U,T)$, there is an ef-pair $(e,f) : (Y,S) \to (U,T)$. Hence, qua left adjoint, there is a unique ef-pair $\Refl (e,f) = (g,h)$ making the following diagram commute:
\begin{equation*}
\begin{tikzcd}
(Y,S)
\arrow[r, "\eta_{(Y,S)}"]
\arrow[d, swap, "{(e,f)}"]
&
\Refl(Y,S)
\arrow[d, dashed, "{(g,h)}"]
\\
(U,T)
\arrow[r, "\eta_{(U,T)}"]
&
\Refl(U,T)
\end{tikzcd}
\end{equation*}
Hence 
$(g,h) \circ \eta_{(Y,S)} : (Y,S) \to \Refl(U,T)$
is the desired ef-pair.
\end{proof}

\subsection{The category of deterministic systems still is semi-algebroidal}
\label{ssec: category of det systems still semi algebroidal}

In order to understand the nonexistence result from the preceding subsection, we now analyze where exactly the \Fraisse{}-limit method---that worked for nondeterministic systems---breaks down for deterministic systems.
As announced, we will find that $\detSysef$ still is semi-algebroidal, and the category-theoretically finite objects are precisely the shifts. (We already saw in example~\ref{exm: deterministic systems not algeboidal wrt finite system} that the category-theoretically finite objects cannot be those systems with finite state spaces: with those, $\detSysef$ cannot be semi-algebroidal.)
Moreover, a later result (lemma~\ref{lem: amalgamation for shifts of finite type}) will show that the category-theoretically finite objects of $\detSysef$ also have the joint embedding property and the amalgamation property. 
So the only reason why the \Fraisse{}-limit method does not yield a universal system for $\detSysef$ is that the countability requirement is violated: there are uncountably many shifts.

We start by recalling some terminology from dynamical systems theory~\parencite{Lind1995}.
A \emph{finite topological generator} of a system $(X,T)$ in $\detSysef$ is a finite clopen partition $\mathcal{C}$ of $X$ (i.e., $\mathcal{C}$ has finitely many cells and they are clopen) such that $\{ T^{-n} (C) : C \in \mathcal{C} , n \in \omega \}$ generates the topology of $X$.\footnote{Recall that a collection $\mathcal{F}$ of subsets of a topological space $X$ \emph{generates} the topology $\tau$ of $X$ if $\tau$ is the smallest topology containing all subsets of $\mathcal{F}$.}
If $(X,T)$ is in $\detSysef$ and $\mathcal{C}$ is a finite clopen partition of $X$, the $\mathcal{C}$-\emph{name} of $x \in X$ is the sequence $(C_n)$ of $\mathcal{C}$-partition cells where, for each $n$, $C_n$ is the cell with $T^n (x)  \in C_n$.
Then the set $Y$ of all $\mathcal{C}$-names is a shift over alphabet $\mathcal{C}$ and the function $f : X \to Y$ that maps $x$ to its $\mathcal{C}$-name is a factor. We call $Y$ the \emph{$\mathcal{C}$-shift} and $f$ the \emph{$\mathcal{C}$-factor}.

A well-known characterization of finite topological generators is the following~\cite{Hedlund1969}.
To state it, recall a classic result of \textcite{DeGroot1956}: An ultrametric on a set $X$ is a metric $d$ on $X$ which, instead of the usual triangle inequality, satisfies the strong triangle inequality $d(x,z) \leq \max( d(x,y) , d(y,z) )$. If $X$ is a Hausdorff space with a countable base $(U_n)$ of clopens, one can define a topology-generating ultrametric on $X$ by $d(x,y) = \frac{1}{n}$ where $n$ is the first $n$ such that $U_n$ separates $x$ and $y$ (i.e., $x \in U_n$ and $y \not\in U_n$ or vice versa).
In particular, if $(X,T)$ is in $\detSysef$, there is an ultrametric on $X$ that generates the topology.

\begin{proposition}[Hedlund's theorem~\cite{Hedlund1969}]
\label{prop: finite top gen characterization}
Let $(X,T)$ be in $\detSysef$. Let $d$ be an ultrametric that generates the topology on $X$. The following are equivalent:
\begin{enumerate}
\item
\label{prop: finite top gen characterization 1}
$(X,T)$ has a finite topological generator.
\item
\label{prop: finite top gen characterization 2}
$(X,T)$ is isomorphic to a shift.
\item
\label{prop: finite top gen characterization 3}
$(X,T)$ is \emph{expansive}, i.e., there is $\epsilon > 0$ such that, for all $x \neq y$ in $X$, there is $n$ such that $d (T^n (x) , T^n (y) ) \geq \epsilon$.
\end{enumerate}
\end{proposition}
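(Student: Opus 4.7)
The plan is to prove the equivalences in a cycle: (\ref{prop: finite top gen characterization 1}) $\Rightarrow$ (\ref{prop: finite top gen characterization 2}) $\Rightarrow$ (\ref{prop: finite top gen characterization 3}) $\Rightarrow$ (\ref{prop: finite top gen characterization 1}).

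For (\ref{prop: finite top gen characterization 1}) $\Rightarrow$ (\ref{prop: finite top gen characterization 2}), suppose $\mathcal{C}$ is a finite clopen partition of $X$ that is a topological generator. The $\mathcal{C}$-factor $f : X \to Y$ onto the $\mathcal{C}$-shift $Y$ is, by the discussion preceding the proposition, already a factor (continuous, equivariant, surjective). It remains to show that $f$ is injective, for then it is a continuous bijection between compact Hausdorff spaces and hence a homeomorphism, yielding an isomorphism $(X,T) \cong (Y,\sigma)$. Injectivity is immediate: if $x \neq x'$ have the same $\mathcal{C}$-name $(C_n)$, then $x,x' \in \bigcap_n T^{-n}(C_n)$; but the sets $T^{-n}(C)$ generate the topology of $X$, so every Hausdorff separation of $x$ and $x'$ would give some $n$ and $C \in \mathcal{C}$ with $T^n(x) \in C$ and $T^n(x') \notin C$, contradicting equality of names.

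For (\ref{prop: finite top gen characterization 2}) $\Rightarrow$ (\ref{prop: finite top gen characterization 3}), I would first reduce to an arbitrary topology-generating metric: if $d_1$ and $d_2$ both generate the topology on a compact space $X$ and $T$ is $d_1$-expansive with constant $\epsilon_1$, then the closed set $D := \{(x,y) \in X \times X : d_1(x,y) \geq \epsilon_1\}$ is disjoint from the diagonal, hence the continuous function $d_2$ attains a positive minimum $\epsilon_2$ on the compact set $D$, which serves as a $d_2$-expansivity constant. Thus, expansiveness is independent of the choice of topology-generating metric, and it suffices to verify it for the standard shift ultrametric on $Y \subseteq A^\omega$, where $d(y,y') = 2^{-\min\{n : y_n \neq y'_n\}}$: taking $\epsilon = 1$, any $y \neq y'$ differ at some $n$, so $d(\sigma^n y, \sigma^n y') = 1$.

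For (\ref{prop: finite top gen characterization 3}) $\Rightarrow$ (\ref{prop: finite top gen characterization 1}), suppose $(X,T)$ is $d$-expansive with constant $\epsilon$. Since $d$ is an ultrametric, every open ball $B_d(x,\epsilon)$ is clopen and, by the ultrametric property, the collection of $\epsilon$-balls forms a partition of $X$; by compactness, this partition $\mathcal{C}$ is finite. Each cell has $d$-diameter at most $\epsilon$; by shrinking the radius if necessary, strictly less than $\epsilon$. I claim $\mathcal{C}$ is a topological generator. Consider the $\mathcal{C}$-factor $f : X \to Y$. For $x \neq x'$, expansiveness gives $n$ with $d(T^n(x),T^n(x')) \geq \epsilon$, so $T^n(x)$ and $T^n(x')$ lie in distinct cells, and hence the $\mathcal{C}$-names of $x$ and $x'$ differ at position $n$. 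Thus $f$ is an injective continuous map into a compact Hausdorff space, hence a homeomorphism onto its image $Y$. The subbasic clopens of $Y$ (``the $n$-th letter equals $C$'') pull back via $f$ to the sets $T^{-n}(C)$, which therefore generate the topology of $X$.

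The only subtle point is the metric-independence of expansiveness used in step two, which I have sketched above via a standard compactness argument; the remaining steps are purely combinatorial and topological once the key observation of each arrow is in hand. The zero-dimensionality and ultrametric assumptions on $X$ make the construction of a finite clopen partition of cells of diameter below $\epsilon$ transparent in step three, so no further obstacle is expected.
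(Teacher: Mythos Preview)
The paper does not give its own proof of this proposition; it is stated as the classical Hedlund theorem with a citation to~\cite{Hedlund1969} and used as a known result. Your cyclic proof is correct and is essentially the standard argument.

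One small remark on your step (\ref{prop: finite top gen characterization 3}) $\Rightarrow$ (\ref{prop: finite top gen characterization 1}): the ``shrinking the radius if necessary'' is not needed. In an ultrametric, the open ball $B(x,\epsilon)$ already has the property that any two of its points are at distance strictly less than $\epsilon$ (by the strong triangle inequality), so if $d(T^n x, T^n x') \geq \epsilon$ then $T^n x$ and $T^n x'$ automatically lie in distinct cells of your partition $\mathcal{C}$. Otherwise the argument is clean.
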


With this terminology, we can formulate an analogue to theorem~\ref{thm: algebroidal category of systems} for deterministic systems.

\begin{theorem}
\label{thm: category of det sys semialgebroidal}
Let $\DSef$ be a full subcategory of $\detSysef$ such that
\begin{enumerate}
\item
\label{itm: det sys algebroidal limit closure}
If $\big( (X_i,T_i) , (e_{i i+1} , f_{i i+1}) \big)_{i \in \omega}$ is an $\omega$-chain in $\DSef$, then $(X,T)$ is again in $\DSef$, where $X$ is defined as in equation~\ref{eqn: construction of limit} with canonical projections $f_i$ and 
$T(\langle x_i : i \in \omega \rangle) := \langle T_i(x_i) : i \in \omega \rangle$.

\item
\label{itm: det sys algebroidal limit of shifts}
If $(X,T)$ is in $\DSef$, then it is the colimit of an $\omega$-chain of shifts in $\DSef$.
\end{enumerate}
Then $\DSef$ is semi-algebroidal and its category-theoretically finite objects are precisely those systems in $\DSef$ that are isomorphic to a shift.
\end{theorem}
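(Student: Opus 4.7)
The proof follows the template of theorem~\ref{thm: algebroidal category of systems} and its supporting propositions, adapted to the deterministic setting in which the category-theoretically finite objects turn out to be shifts rather than systems with finite state space. That all morphisms in $\DSef$ are monic carries over verbatim from proposition~\ref{prop: all morphisms are monic}, and the existence of colimits of $\omega$-chains is immediate from condition~\ref{itm: det sys algebroidal limit closure}: the proof of proposition~\ref{prop: every chain has colimit} adapts mechanically, the only difference being that the pointwise dynamics $T(\langle x_i\rangle)=\langle T_i(x_i)\rangle$ is itself a function, so the canonical projections $f_i$ are already equivariant surjections, i.e.\ factors. Combined with condition~\ref{itm: det sys algebroidal limit of shifts}, semi-algebroidality will follow once the category-theoretically finite objects are identified with (systems isomorphic to) shifts. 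One direction is immediate: by lemma~\ref{lem: finite colimit is isomorphic to object in diagram} applied to the chain of shifts provided by condition~\ref{itm: det sys algebroidal limit of shifts}, every category-theoretically finite object in $\DSef$ is isomorphic to one of those shifts.

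The substantive step is the converse: every shift $(X,T)$ in $\DSef$ is category-theoretically finite. Fix an $\omega$-chain $\big((A_i,\alpha_i),(e_{ii+1},f_{ii+1})\big)_{i\in\omega}$ with colimit $\big((A,\alpha),(e_i,f_i)\big)$, and an ef-pair $(e,f):(X,T)\to(A,\alpha)$. Since an ef-pair is determined by its factor component (proposition~\ref{prop: ef pairs determine each other}), it suffices to produce, for some $n$, a unique factor $h:(A_n,\alpha_n)\to(X,T)$ with $h\circ f_n=f$, and then propagate it to every $i\geq n$ by setting $h^{(i)}:=h\circ f_{ni}$, where $f_{ni}:A_i\to A_n$ is the factor composite from the chain; uniqueness at each index $i$ then follows from surjectivity of $f_i$.

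The construction uses that $X\subseteq\mathcal{A}^\omega$ is a shift on some finite alphabet $\mathcal{A}$. Define the continuous map $\phi:A\to\mathcal{A}$ by $\phi(a):=f(a)_0$; its fibres form a finite clopen partition $\mathcal{P}$ of $A$. By lemma~\ref{lem: refine partitions}, some $n$ satisfies that $A/f_n$ refines $\mathcal{P}$, so $\phi$ descends to a continuous $\tilde\phi_n:A_n\to\mathcal{A}$ with $\phi=\tilde\phi_n\circ f_n$. Then set
\begin{equation*}
h(a_n)_k \;:=\; \tilde\phi_n\bigl(\alpha_n^k(a_n)\bigr).
\end{equation*}
Equivariance of $f$ with the shift together with $\phi=\tilde\phi_n\circ f_n$ yields $h\circ f_n=f$; surjectivity of $f_n$ then forces $h$ to take values in $X$ and to be surjective. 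Continuity of $h$ is coordinatewise, equivariance $h\circ\alpha_n=\sigma\circ h$ is an index shift, and uniqueness of $h$ follows again from surjectivity of $f_n$.

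The main obstacle---and the reason the proof cannot simply mimic proposition~\ref{prop: category finite iff system finite} with a single partition applied to an arbitrary finite-state system---is that in general a single clopen partition does not determine a deterministic system, so one would need to refine the countable family $\{T^{-k}\mathcal{P}\}_{k\in\omega}$ simultaneously. For a shift this extra work vanishes: the $0$th-coordinate partition is a topological generator by Hedlund's theorem (proposition~\ref{prop: finite top gen characterization}), so iterating $\alpha_n$ after $\tilde\phi_n$ automatically reconstructs every later coordinate. This also explains, in the spirit of example~\ref{exm: deterministic systems not algeboidal wrt finite system}, why $\detSysef$ at large is only semi-algebroidal and why its category-theoretically finite objects are the (possibly uncountably many) shifts rather than the finite-state systems.
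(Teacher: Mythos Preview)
Your proof is correct and follows essentially the same route as the paper. The paper packages the substantive step as a separate lemma (lemma~\ref{lem: det sys cat finite iff fin gen}) phrased in terms of finite topological generators rather than shifts directly, but the construction is the same: where the paper defines $u(y_n)$ as the unique $x$ whose $\mathcal{C}$-name is $\big(C(S_n^k(y_n))\big)_k$, your formula $h(a_n)_k=\tilde\phi_n(\alpha_n^k(a_n))$ is exactly this when $\mathcal{C}$ is the $0$th-coordinate partition of a shift, and the verification $h\circ f_n=f$ is the same computation as the paper's claim~(a). The only cosmetic difference is that the paper invokes lemma~\ref{lem: commuting trianlge lemma} to conclude $u$ is a factor, whereas you verify continuity, surjectivity, and equivariance directly; both are routine.
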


Before proving this, we note a sufficient condition for requirement~\ref{itm: det sys algebroidal limit of shifts} (proof below).

\begin{lemma}
\label{lem: sufficient condition for det sys limit of shifts}
Let $\DSef$ be a full subcategory of $\detSysef$ such that: If $(X,T)$ is in $\DSef$ and $\mathcal{C}$ is a finite clopen partition of $X$, then the $\mathcal{C}$-shift is again in $\DSef$. Then $\DSef$ satisfies requirement~\ref{itm: det sys algebroidal limit of shifts} of theorem~\ref{thm: category of det sys semialgebroidal}.
\end{lemma}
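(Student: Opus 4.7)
The plan is to imitate the proof of proposition~\ref{prop: every object is colimit} in the simpler deterministic setting: I approximate $(X,T)$ by $\mathcal{C}_i$-shifts coming from a refining sequence of finite clopen partitions $\mathcal{C}_i$ of $X$, and then identify $(X,T)$ as the colimit of the resulting chain via the standard inverse-limit argument.

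To build the partitions, I invoke De~Groot's theorem to fix an ultrametric $d$ on $X$ generating the topology. By compactness and the ultrametric property, the closed balls of radius $2^{-i}$ form a finite clopen partition $\mathcal{C}_i$ of $X$, and $\mathcal{C}_{i+1}$ refines $\mathcal{C}_i$; since balls of arbitrarily small radius separate points, $\bigcup_i \mathcal{C}_i$ generates the topology of $X$. Let $(X_i, T_i)$ be the $\mathcal{C}_i$-shift and $f_i : (X,T) \to (X_i, T_i)$ the $\mathcal{C}_i$-factor. By the hypothesis of the lemma, each $(X_i, T_i)$ is in $\DSef$. Because $\mathcal{C}_{i+1}$ refines $\mathcal{C}_i$, the alphabet map sending each cell of $\mathcal{C}_{i+1}$ to the unique $\mathcal{C}_i$-cell containing it extends coordinate-wise to a continuous surjection $f_{i,i+1} : X_{i+1} \to X_i$ with $f_{i,i+1} \circ f_{i+1} = f_i$, and by lemma~\ref{lem: commuting trianlge lemma} this $f_{i,i+1}$ is a factor. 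Writing $e_{i,i+1} := \underline{f_{i,i+1}}$ and $e_i := \underline{f_i}$, I obtain an $\omega$-chain $\big((X_i, T_i), (e_{i,i+1}, f_{i,i+1})\big)_{i \in \omega}$ of shifts in $\DSef$ and a cocone $\big((X,T), (e_i, f_i)\big)$ over it.

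It remains to verify the universal property. Let $\big((Y,S), (g_i, h_i)\big)$ be another cocone. Because $\bigcup_i \mathcal{C}_i$ generates the topology of $X$, the $f_i$ jointly separate points of $X$; and the usual compactness argument (for a compatible tuple $(y_i)$, the sets $F_i := f_i^{-1}(y_i)$ form a decreasing family of nonempty closed subsets of the compact space $X$, hence $\bigcap_i F_i \neq \emptyset$) shows that $(X, (f_i))$ is the inverse limit of $(X_i, f_{i,i+1})$ in compact Hausdorff spaces. Thus the universal property yields a unique continuous surjection $u : Y \to X$ with $f_i \circ u = h_i$ for every $i$. For equivariance, for each $j$ we have
\begin{align*}
f_j \circ u \circ S = h_j \circ S = T_j \circ h_j = T_j \circ f_j \circ u = f_j \circ T \circ u,
\end{align*}
and since the $f_j$ separate points, $u \circ S = T \circ u$. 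Hence $u$ is a factor and $(\underline{u}, u)$ is the required unique mediating ef-pair, so $\big((X,T), (e_i, f_i)\big)$ is the desired colimit.

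The main obstacle is really just the inverse-limit identification in the last paragraph; everything else is a direct transfer of the (deterministic case of the) template from proposition~\ref{prop: every object is colimit}, without the multi-valued back-and-forth complications.
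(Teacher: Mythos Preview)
Your proof is correct and follows essentially the same approach as the paper's: build a refining sequence of finite clopen partitions generating the topology, pass to the $\mathcal{C}_i$-shifts via the $\mathcal{C}_i$-factors, use lemma~\ref{lem: commuting trianlge lemma} for the bonding maps, and verify that $(X,T)$ is the colimit. The only cosmetic differences are that you produce the partitions concretely via an ultrametric (the paper simply posits a refining generating sequence) and that you package the colimit verification as ``$X$ is the inverse limit of the $X_i$, hence the mediating map exists'' plus a joint-separation argument for equivariance, whereas the paper writes out the mediating map $u$ and checks its properties by hand; neither change is substantive.
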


This sufficient condition is clearly satisfied for $\DSef = \detSysef$, so we in particular get the announced result of this subsection:

\begin{corollary}
\label{cor: detSysef is semi algebroidal}
The category of deterministic systems $\detSysef$ is semi-algebroidal and its category-theoretically finite objects are precisely the deterministic systems that are isomorphic to a shift.
\end{corollary}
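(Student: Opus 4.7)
The plan is to derive the corollary as a direct application of Theorem~\ref{thm: category of det sys semialgebroidal} with $\DSef = \detSysef$. This reduces the proof to verifying the two hypotheses of that theorem for the full category of deterministic systems.

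For hypothesis~\ref{itm: det sys algebroidal limit closure}, I would take an $\omega$-chain $\big((X_i,T_i),(e_{i\,i+1},f_{i\,i+1})\big)_{i\in\omega}$ in $\detSysef$ and show that its colimit $(X,T)$ is again deterministic. Since $\detSysef \subseteq \Sysef$ and $\Sysef$ already satisfies the nondeterministic closure condition (theorem~\ref{thm: algeboidal subcategories}(\ref{itm: algeboidal subcategories non-trivial systems})), the colimit $(X,T)$ of proposition~\ref{prop: every chain has colimit} exists in $\Sysef$. What needs checking is that $T$ is single-valued and total. Given $x = \langle x_i\rangle \in X$, each $T_i$ is a function, so set $x'_i := T_i(x_i)$; since the factors $f_{ij} : X_j \to X_i$ between deterministic systems are continuous equivariant surjections (as recalled in section~\ref{sec: categories of dynamical systems}), we have $f_{ij}(x'_j) = f_{ij}(T_j(x_j)) = T_i(f_{ij}(x_j)) = x'_i$, so $\langle x'_i\rangle$ lies in $X$ and is the unique element of $T(x)$ as defined in condition~\ref{itm: det sys algebroidal limit closure}. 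This gives that $T$ is a function; nonemptiness follows from surjectivity of the bonding maps, and determinism implies nontriviality.

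For hypothesis~\ref{itm: det sys algebroidal limit of shifts}, I would invoke lemma~\ref{lem: sufficient condition for det sys limit of shifts} and verify its sufficient condition: if $(X,T) \in \detSysef$ and $\mathcal{C}$ is a finite clopen partition of $X$, then the $\mathcal{C}$-shift is a shift space over the finite alphabet $\mathcal{C}$, and is nonempty because $X$ is nonempty. As a nonempty shift it is a deterministic system (the shift map being continuous), so it lies in $\detSysef$.

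With both hypotheses verified, theorem~\ref{thm: category of det sys semialgebroidal} yields that $\detSysef$ is semi-algebroidal and identifies its category-theoretically finite objects as exactly the systems isomorphic to a shift. I expect the only subtle point to be the verification in hypothesis~\ref{itm: det sys algebroidal limit closure} that $T$ really is a function: the existence of $x'$ in the limit requires the equivariance of the bonding factors rather than mere compatibility of projections, and this is the one place where restricting from multifunctions to genuine functions matters. Everything else is routine given the preparatory lemmas.
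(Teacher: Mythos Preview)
Your proposal is correct and follows the same approach as the paper: apply Theorem~\ref{thm: category of det sys semialgebroidal} to $\DSef = \detSysef$, using Lemma~\ref{lem: sufficient condition for det sys limit of shifts} for hypothesis~\ref{itm: det sys algebroidal limit of shifts}. The paper dispatches this in one sentence and leaves hypothesis~\ref{itm: det sys algebroidal limit closure} implicit, whereas you spell out the verification that the inverse-limit dynamics is a well-defined function via equivariance of the bonding factors; this extra detail is fine, though your detour through the nondeterministic colimit of Proposition~\ref{prop: every chain has colimit} is not strictly needed, since the map $\langle x_i\rangle \mapsto \langle T_i(x_i)\rangle$ is directly seen to be a continuous function into $X$.
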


In the remainder of this subsection, we prove these results.

\begin{proof}[Proof of lemma~\ref{lem: sufficient condition for det sys limit of shifts}]
Let $(X,T)$ be in $\DSef$ and show that it is the colimit of an $\omega$-chain of shifts in $\DSef$.
Let $(\mathcal{C}_i)$ be a refining sequence of finite clopen partitions of $X$ generating the topology.
Define $f_i : (X,T) \to (X_i,T_i)$ as the $\mathcal{C}_i$-factor.
By assumption, $(X_i, T_i)$ is in $\DSef$.
Define $f_{ij} : X_j \to X_i$ by mapping a sequence $(D_n)$ of $\mathcal{C}_j$-partition cells to the sequence $(C_n)$ of $\mathcal{C}_i$-partition cells with $D_n \subseteq C_n$. Hence $f_{ii} = \id$ and $f_{ij} = f_{i i+1} \circ \ldots \circ f_{j-1 j}$, and it is also easily checked that $f_i = f_{ij} \circ f_j$.
We show that $(X,T)$ with the $f_i$ is the limit of the $\omega$-chain $\big( (X_i ,T_i) , f_{i  i+1} \big)$.

Note that $\big( (X_i ,T_i) , f_{i  i+1} \big)$ is indeed an $\omega$-chain and $\big( (X ,T) , f_i \big)$ a cone, because the $f_i : (X,T) \to (X_i , T_i)$ are factors and, since $f_i = f_{ij} \circ f_j$, lemma~\ref{lem: commuting trianlge lemma} implies that also $f_{ij}$ is a factor. 
So we need to check that $\big( (X ,T) , f_i \big)$ is limiting. If $\big( (Y ,S) , g_i \big)$ is another cone, define the mediating morphism $u : Y \to X$ by mapping $y$ to the $x$ in $\bigcap_i F^y_i$ with $F^y_i := f_i^{-1} \big( \{ g_i (y) \} \big)$. We now show that this is well-defined and has the required properties.

Well-defined:
It is straightforward to check that the $F^y_i$'s for a descending sequence of nonempty closed sets, so, by compactness, $\bigcap_i F^y_i$ is indeed nonempty.
Concerning the uniqueness of $x$, if $x \neq x'$ were in $\bigcap_i F^y_i$, there is, since $(\mathcal{C}_i)$ generates the topology, some $i$ and $C \in \mathcal{C}_i$ such that $x \in C$ and $x' \not\in C$.
But then 
$f_i (x) = g_i (y) = f_i (x')$, so
$C = f_i (x)_0 = f_i (x')_0 \ni x'$, contradiction.

Surjective:
Let $x \in X$ and find $y \in Y$ with $u(y) = x$. Again it is straightforward to check that $G_i := g_i^{-1} ( \{ f_i(x) \} )$ forms a descending sequence of nonempty closed sets, so, by compactness, there is $y \in \bigcap_i G_i$ and, by construction, $x \in \bigcap_i F_i^y$, so $u(y) = x$.

Commuting:
Let $y \in Y$ and $i \in \omega$ and show $f_i ( u (y) ) = g_i(y)$.
By definition, $u (y) \in \bigcap_i  F^y_i$, so $f_i ( u(y) ) = g_i (y)$.

Equivariant:
Let $y \in Y$ and show $u ( S (y) ) = T ( u(y) )$.
So let $i \in \omega$ and show $T( u(y) ) \in F_i^{S(y)}$. Indeed, we have 
\begin{align*}
f_i ( T (u (y) ) 
=
T_i ( f_i ( u (y) ) )
=
T_i ( g_i (y) )
=  
g_i ( S (y) ).
\end{align*}

Unique:
If $u' : (Y,S) \to (X,T)$ is another factor with $f_i \circ u' = g_i$ and $y \in Y$, we claim $u'(y) = u(y)$.
Indeed, for $i \in \omega$, we have $f_i ( u'(y) ) = g_i (y)$, so $u'(y) \in F^y_i$. Since $u(y)$ is the unique element in $\bigcap_i F^y_i$, the claim follows.
\end{proof}

\begin{lemma}
\label{lem: det sys cat finite iff fin gen}
Let $\DSef$ be a full subcategory of $\detSysef$.
If $(X,T)$ has a finite topological generator, then $(X,T)$ is category-theoretically finite in $\DSef$.
If $\DSef$ satisfies requirement~\ref{itm: det sys algebroidal limit of shifts} of theorem~\ref{thm: category of det sys semialgebroidal}, then the converse holds, too.
\end{lemma}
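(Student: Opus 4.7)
The plan is to prove each direction separately, using Hedlund's theorem (Proposition~\ref{prop: finite top gen characterization}) to reduce to shift-space representations, and then leveraging the refinement lemma~\ref{lem: refine partitions} (forward) and the colimit-absorption lemma~\ref{lem: finite colimit is isomorphic to object in diagram} (converse).

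For the forward direction, assume $(X,T)$ has a finite topological generator. By Hedlund's theorem, we may identify $(X,T)$ with a shift $(X_0,\sigma)$ for $X_0\subseteq A^\omega$ with $A$ finite. Given an $\omega$-chain $\big((Y_i,S_i),(e_{i\,i+1},f_{i\,i+1})\big)$ in $\DSef$ with colimit $\big((Y,S),(e_i,f_i)\big)$ and a morphism $(g,h)\colon (X,T)\to(Y,S)$, the task reduces to factoring the factor $h\colon Y\to X_0$ through some $f_n\colon Y\to Y_n$; an application of lemma~\ref{lem: commuting trianlge lemma} will then upgrade such a factorization to a factor in $\detSysef$, and the ef-pair structure is determined by Proposition~\ref{prop: ef pairs determine each other}. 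The key step: $S$-equivariance of $h$ gives $h(y)_k=h_0(S^k(y))$, where $h_0(y):=h(y)_0$ is a continuous map into the finite discrete set $A$. Then $\{h_0^{-1}(a):a\in A\}$ is a finite clopen partition of $Y$, so lemma~\ref{lem: refine partitions} yields some $n$ with $Y/f_n$ refining it, giving a continuous $u_0\colon Y_n\to A$ with $u_0\circ f_n=h_0$.

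Now define $u\colon Y_n\to A^\omega$ by $u(y_n)_k:=u_0(S_n^k(y_n))$. A brief check (using that every $y_n\in Y_n$ is of the form $f_n(y)$ by surjectivity of $f_n$) shows $u$ is continuous and equivariant, takes values in $X_0$, and satisfies $u\circ f_n=h$; surjectivity of $u$ onto $X_0$ follows from surjectivity of $h$. For any $i\geq n$, setting $u_i:=u\circ f_{n\,i}$ gives a factor $(Y_i,S_i)\to(X,T)$ with $u_i\circ f_i=h$, and uniqueness of $u_i$ follows from surjectivity of $f_i$. The ef-pair $(\underline{u_i},u_i)$ is the required factoring morphism in $\DSef$.

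For the converse, assume $\DSef$ satisfies requirement~\ref{itm: det sys algebroidal limit of shifts} and $(X,T)$ is category-theoretically finite in $\DSef$. Then $(X,T)$ is the colimit of an $\omega$-chain of shifts in $\DSef$. Since all morphisms in $\DSef$ are monic (Proposition~\ref{prop: all morphisms are monic}), lemma~\ref{lem: finite colimit is isomorphic to object in diagram} tells us $(X,T)$ is isomorphic to one of these shifts, and Hedlund's theorem then provides a finite topological generator. The main conceptual obstacle is really in the forward direction, namely spotting that, although the partition $\{h^{-1}(x):x\in X\}$ may be infinite, the equivariance of $h$ makes it determined by the \emph{finite} zeroth-coordinate partition, so a single application of lemma~\ref{lem: refine partitions} suffices.
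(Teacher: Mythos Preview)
Your proof is correct and follows essentially the same approach as the paper: both apply the refinement lemma~\ref{lem: refine partitions} to the finite partition induced by the generator on $Y$, then build the factor $u\colon Y_n\to X$ by tracking the generator-cell of the $S_n$-orbit, and both handle the converse via lemma~\ref{lem: finite colimit is isomorphic to object in diagram}. The only cosmetic difference is that you pass to a concrete shift representation $X_0\subseteq A^\omega$ via Hedlund and work with the zeroth-coordinate map $h_0$, whereas the paper works intrinsically with the generator $\mathcal{C}$ and the $\mathcal{C}$-name map; under the Hedlund isomorphism these are the same data.
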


\begin{proof}
($\Rightarrow$) 
Assume $\mathcal{C}$ is a finite topological generator of $(X,T)$. Moreover, let $\big( (Y_i,S_i) , (e_{i i+1} , f_{i i+1}) \big)_{i \in \omega}$ be an $\omega$-chain in $\DSef$ with colimit $(Y,S)$ and let $(g,h) : (X,T) \to (Y,S)$ be an ef-pair. We need to find $n \in \omega$ such that, for all $i \geq n$, there is a unique morphism $(g',h') : (X,T) \to (Y_i,S_i)$ such that $(g,h) = (e_i,f_i) \circ (g',h')$.

By lemma~\ref{lem: refine partitions}, there is $n \in \omega$ such that $\mathcal{D} := Y/f_n$ refines the closed and finite partition $\{ h^{-1} (C) : C \in \mathcal{C} \}$. 
So for each $y_n \in Y_n$, there is a unique $C = C(y_n) \in \mathcal{C}$ such that $f_n^{-1}(y_n) \subseteq h^{-1}(C)$. 
Now we claim: 
\begin{itemize}
\item[(a)]
For any $y \in Y$, $y_n \in Y_n$ and $x \in X$, if $f_n(y) = y_n$ and $x = h(y)$, then, for all $k$, the cell $C(S_n^k(y_n))$ is the one containing $T^k(x)$.

\item[(b)]
For each $y_n \in Y_n$, there is a unique $x \in X$ such that, for all $k$, we have $T^k(x) \in C(S_n^k(y_n))$.
\end{itemize}
Indeed, to show~(a), let $k$ be given and let $C \in \mathcal{C}$ be such that $T^k(x) \in C$. We need to show that $C = C(S_n^k(y_n))$, i.e., that $f_n^{-1}(S_n^k(y_n)) \subseteq h^{-1}(C)$. By the refinement of the partitions, it suffices to show that $f_n^{-1}(S_n^k(y_n)) \cap h^{-1}(C) \neq \emptyset$. For this, note that $f_n( S^k(y) ) = S_n^k ( f_n (y) ) = S_n^k( y_n )$ and $h(S^k(y)) = T^k ( h(y) ) = T^k (x) \in C$, so $S^k(y) \in f_n^{-1}(S_n^k(y_n)) \cap h^{-1}(C)$, as needed.
To show~(b), uniqueness follows since $\mathcal{C}$ is generator: if $x$ and $x'$ have the described property, they have the same $\mathcal{C}$-name, so they must be identical. For existence, there is, since $f_n$ is surjective, some $y \in Y$ with $f_n(y) = y_n$. Set $x := h(y) \in X$. Then~(a) implies $T^k(x) \in C(S_n^k(y_n))$ for all $k$.

Now, by~(b), there is a function $u : Y_n \to X$ that maps $y_n$ to the unique $x \in X$ mentioned in~(b). For the functions $f_n : Y \to Y_n$ and $h : Y \to X$, note that $u \circ f_n = h$: Indeed, for $y \in Y$, we have, by~(a) and writing $y_n := f_n(y)$ and $x := h(y)$, that $T^k(x) \in C(S_n^k(y_n))$ for all $k$, i.e., $u (y_n) = x$, so $h(y) = u ( f_n(y) )$. 
Hence, by lemma~\ref{lem: commuting trianlge lemma}, $u : Y_n \to X$ is a factor. 
This finishes the proof:
Given $i \geq n$, consider $(g' , h') := (e_{ni}, f_{ni}) \circ (\underline{u}, u) : (X, T ) \to (Y_i, S_i)$. Then $(e_i , f_i ) \circ (g' , h') = (e_n, f_n ) \circ (\underline{u}, u) = (g, h)$. And $(g', h')$ is unique with this property since $(e_i, f_i)$ is monic.

($\Leftarrow$)
Now assume that $\DSef$ satisfies requirement~\ref{itm: det sys algebroidal limit of shifts} and let $(X,T)$ be category-theoretically finite. So $(X,T)$ is a colimit of an $\omega$-chain $((X_i,T_i) , (e_{i i+1} , f_{i i+1}))$ of shifts in $\DSef$. 
By lemma~\ref{lem: finite colimit is isomorphic to object in diagram}, $(X,T)$ is isomorphic to some $(X_i,T_i)$, which, qua shift, has a finite topological generator (proposition~\ref{prop: finite top gen characterization}).
\end{proof}

\begin{proof}[Proof of theorem~\ref{thm: category of det sys semialgebroidal}]
Let $\DSef$ be a full subcategory of $\detSysef$ with properties~\ref{itm: det sys algebroidal limit closure} and~\ref{itm: det sys algebroidal limit of shifts}.
By lemma~\ref{lem: det sys cat finite iff fin gen}, the category-theoretically finite objects of $\DSef$ are precisely those which are isomorphic to a shift.
It remains to show semi-algebroidality.
First, all morphisms in $\DSef$ are monic by proposition~\ref{prop: all morphisms are monic}.
Second, if $(X,T)$ is in $\DSef$, then, by~\ref{itm: det sys algebroidal limit of shifts}, it is a colimit of an $\omega$-chain of shifts in $\DSef$, i.e., of category-theoretically finite objects of $\DSef$.
Third, if $\big( (X_i,T_i) , (e_{i i+1} , f_{i i+1}) \big)_{i \in \omega}$ is an $\omega$-chain of (finite) objects in $\DSef$, then $(X,T)$, as described as in requirement~\ref{itm: det sys algebroidal limit closure}, is in $\DSef$, and by proposition~\ref{prop: every chain has colimit}, it is the colimit of the chain in $\DSef$.
\end{proof}

\section{Algebroidal categories of deterministic systems}
\label{sec: algebroidal categories of deterministic systems}

We have seen that the category $\detSysef$ fails to be algebroidal because its collection of category-theoretically finite objects---namely, the set of all shifts---is uncountable. As a result, the  \Fraisse{}-limit method cannot be applied, and indeed there cannot be a universal deterministic system. So a natural question is whether we still get algebroidal \emph{sub}categories of deterministic systems that then, further, also have universal and homogeneous systems.

As described in section~\ref{ssec: statement of the main results}, our strategy is to choose a countable class $S$ of shifts and consider the full subcategory $\DSef(S)$ of $\detSysef$ consisting of those systems that are limits of shifts in $S$. We defined $\DSef(S)$ in definition~\ref{def: DSef} and called its objects $\omega$-proshifts over $S$. Thus, intuitively, the shifts in $S$ are our basic building blocks with which we can build more complex systems, namely the $\omega$-proshifts of over $S$.

\begin{remark}
\label{rmk: ind and pro completion}
Such constructions of categories are common in mathematics, because of their desirable properties. For example, in topology, Stone spaces (aka profinite spaces) are the limits of finite discrete spaces; similarly for profinite groups and profinite graphs. In domain theory, bifinite domains are the colimits along embedding-projection pairs of finite partial orders with a least element. The general idea is that of an Ind-completion (resp., Pro-completion), i.e., the `closure' of a category under filtered colimits (resp., limits)~\cite[ch.~6]{Kashiwara2006}. Given remark~\ref{rmk: limit vs colimit}, we can use Ind- and Pro-completions interchangeably here. However, one difference in our definition is that we only consider $\omega$-chains and not any filtered diagram. We stress this by adding the prefix `$\omega$' to `proshift'.
\end{remark}

In this section, we show that this strategy indeed yields algebroidal subcategories of deterministic systems:

\begin{theorem}
\label{thm: generating algebroidal categories of det sys}
Let $S$ be a countable collection of shifts. Then $\DSef(S)$ is an algebroidal category and its category-theoretically finite objects are, up to isomorphism, exactly those that are in $S$.
Moreover, $\DSef(S)$ contains a $\DSef(S)$-universal and $S$-homogeneous deterministic system $(U,T)$ iff 
\begin{enumerate}
\item
\label{itm: joint embedding shifts}
if $X_0, X_1 \in S$, there is $Y \in S$ and there are factors $f_0 : Y \to X_0$ and $f_1 : Y \to X_1 $. 

\item
\label{itm: amalgamation shifts}
if $X,Y_0, Y_1 \in S$ and $f_0 : Y_0 \to X$ and $f_1 : Y_1 \to X$ are factors, then there is $Z \in S$ and there are factors $g_0 : Z \to Y_0$ and $g_1 : Z \to Y_1$ such that $f_0 \circ g_0 = f_1 \circ g_1$. 
\end{enumerate}
\end{theorem}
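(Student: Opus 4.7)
The plan is to apply theorem~\ref{thm: category of det sys semialgebroidal} to get semi-algebroidality of $\DSef(S)$, upgrade to algebroidality via countability of $S$ and of block codes, and then invoke the Fra\"iss\'e theorem (theorem~\ref{thm: category theoretic fraisse jonsson}) to characterize existence of a universal homogeneous object in terms of the joint embedding and amalgamation properties of the category-theoretically finite objects.

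To verify semi-algebroidality, note that requirement~\ref{itm: det sys algebroidal limit of shifts} of theorem~\ref{thm: category of det sys semialgebroidal} is immediate from the definition: every $Y \in \DSef(S)$ is a colimit of an $\omega$-chain of shifts in $S \subseteq \DSef(S)$. For requirement~\ref{itm: det sys algebroidal limit closure}, given an $\omega$-chain $(Y_i)_{i \in \omega}$ in $\DSef(S)$ with each $Y_i$ itself the colimit of an $\omega$-chain $(X^i_j)_{j \in \omega}$ of shifts in $S$, I would construct a diagonal $\omega$-chain of shifts in $S$ whose colimit is the colimit $Y$ of the $(Y_i)$. The forward direction of lemma~\ref{lem: det sys cat finite iff fin gen} (via proposition~\ref{prop: finite top gen characterization}) makes every $X^i_j$ category-theoretically finite in $\detSysef$, so each composite $X^i_j \to Y_i \to Y$ factors through some $X^{i'}_{j'}$ for $(i', j')$ large enough; enumerating pairs $(i,j)$ and threading these factorings yields a cofinal subchain of shifts in $S$ with colimit $Y$. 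With semi-algebroidality in hand, the cat-finite objects of $\DSef(S)$ are (up to isomorphism) exactly $S$: shifts in $S$ are cat-finite by lemma~\ref{lem: det sys cat finite iff fin gen} (using proposition~\ref{prop: finite top gen characterization}); conversely, a cat-finite $X \in \DSef(S)$ is isomorphic to a shift by the converse of that lemma, and since $X$ is a colimit of shifts in $S$ along monic morphisms (proposition~\ref{prop: all morphisms are monic}), lemma~\ref{lem: finite colimit is isomorphic to object in diagram} forces $X$ to be isomorphic to some element of $S$.

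To promote semi-algebroidality to algebroidality, countability of cat-finite objects up to isomorphism is immediate from countability of $S$, while countability of morphisms between two cat-finite objects follows from the Curtis--Hedlund--Lyndon theorem: a factor between two shifts over finite alphabets is an equivariant continuous surjection, hence a sliding block code, of which there are only countably many; and each ef-pair is determined by its factor (proposition~\ref{prop: ef pairs determine each other}). Hence $\DSef(S)$ is algebroidal. Theorem~\ref{thm: category theoretic fraisse jonsson} then gives a $\DSef(S)$-universal and $S$-homogeneous object iff the full subcategory on $S$ has the joint embedding property and the amalgamation property. Translating each ef-pair $X_0 \to Y$ to its factor $Y \to X_0$ via proposition~\ref{prop: ef pairs determine each other} turns these into conditions~\ref{itm: joint embedding shifts} and~\ref{itm: amalgamation shifts}, respectively. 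The main obstacle I anticipate is the diagonal argument for requirement~\ref{itm: det sys algebroidal limit closure}: one must simultaneously manage the layer index $i$ and the depth index $j$ of each $X^i_j$, combining cat-finiteness-based factorings with an enumeration that keeps the resulting diagonal chain cofinal, so that its colimit is genuinely $Y$ rather than a proper subsystem.
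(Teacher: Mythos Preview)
Your proposal is correct and follows essentially the same route as the paper: verify the two hypotheses of theorem~\ref{thm: category of det sys semialgebroidal}, identify the category-theoretically finite objects with $S$ via lemma~\ref{lem: det sys cat finite iff fin gen} and lemma~\ref{lem: finite colimit is isomorphic to object in diagram}, upgrade to algebroidality using countability of $S$ and the Curtis--Hedlund--Lyndon theorem, and conclude with theorem~\ref{thm: category theoretic fraisse jonsson}. The paper packages your diagonal argument for requirement~\ref{itm: det sys algebroidal limit closure} as a standalone general lemma (lemma~\ref{lem: pro category closed under limits}); note that the factoring step there is that the composite $X^i_j \to Y_i \to Y_{i+1}$ factors through some $X^{i+1}_{j'}$ (using cat-finiteness of $X^i_j$ against the colimit presentation of $Y_{i+1}$), rather than factoring $X^i_j \to Y$ directly as you phrased it---but this is exactly the bookkeeping you flagged as the main obstacle.
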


We prove the theorem in the remainder of this section. In the next section, we then apply it to the choice of $S$ as the class of shifts of finite type and the class of sofic shifts, respectively.

For the proof, we need the Curtis-Lyndon-Hedlund Theorem (see, e.g., \cite[thm.~6.2.9]{Lind1995}) in the following version.
To fix some terminology: Let $X$ be a shift over alphabet $A$. Write $B_n(X)$ for the set of all words $w \in A^n$ that occur in some $x \in X$ (i.e., there is $k$ such that $w = (x(k), \ldots , x(k + n -1))$).
For a word $w \in A^n$, we call $C_w (X) := \{ x \in X : w = (x(0), \ldots , x(n-1)) \}$ the \emph{cylinder set} of $w$ in $X$. 

\begin{lemma}[Curtis-Lyndon-Hedlund Theorem]
\label{lem: Curtis-Lyndon-Hedlund Theorem}
Let $X$ and $Y$ be shifts over alphabets $A$ and $B$, respectively.
Let $f : X \to Y$ be a continuous function that commutes with the shifts (i.e., $f \circ \sigma = \sigma \circ f$).
Then there is $N \in \omega$ such that $f$ is an \emph{$N$-sliding block code}, i.e., there is a function $\varphi : B_N(X) \to B$ (aka \emph{block map}) such that, for all $x \in X$,
\begin{align*}
f(x) (k) 
&= 
\varphi \big( ( x(k), \ldots , x(k + N-1) ) \big)
&
(k \in \omega).
\end{align*}
Moreover, $f$ also is an $n$-sliding block code for all $n \geq N$.
\end{lemma}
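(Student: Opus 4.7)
The plan is to upgrade the pointwise continuity of $f$ to a uniform modulus via compactness of $X$, and then use shift-equivariance to propagate a single-coordinate block map to all positions.

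First I would equip $X \subseteq A^\omega$ and $Y \subseteq B^\omega$ with the standard ultrametrics $d(x,x') = 2^{-n}$, where $n$ is the least index at which $x$ and $x'$ differ; the cylinder sets $C_w(X)$ for $w \in B_m(X)$ then form a clopen basis of $X$, and the clopens of the form $C_b(Y) = \{ y \in Y : y(0) = b\}$ for $b \in B$ form a finite clopen partition of $Y$. Since $f$ is continuous, each preimage $f^{-1}(C_b(Y))$ is clopen in $X$. Using compactness of $X$, each such clopen is a finite union of basic cylinders; by refining to a common length I can choose a single $N \in \omega$ such that, for every $b \in B$, $f^{-1}(C_b(Y))$ is a union of cylinders $C_w(X)$ with $w \in B_N(X)$.

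This already means that $f(x)(0)$ depends only on the initial block $(x(0), \ldots, x(N-1))$: for each $b \in B$ let $W_b := \{ w \in B_N(X) : C_w(X) \subseteq f^{-1}(C_b(Y)) \}$, noting that the $W_b$'s partition $B_N(X)$ because the $C_b(Y)$'s partition $Y$. Then define the block map $\varphi : B_N(X) \to B$ by $\varphi(w) := b$ iff $w \in W_b$. By construction, $f(x)(0) = \varphi(x(0), \ldots, x(N-1))$ for all $x \in X$. To get the formula for arbitrary position $k$, I invoke the shift-equivariance $f \circ \sigma = \sigma \circ f$: for any $x \in X$ and $k \in \omega$,
\begin{align*}
f(x)(k) = (\sigma^k f(x))(0) = f(\sigma^k x)(0) = \varphi\big( (\sigma^k x)(0), \ldots, (\sigma^k x)(N-1) \big) = \varphi\big( x(k), \ldots, x(k+N-1) \big),
\end{align*}
which is exactly the $N$-sliding block code property.

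For the moreover clause, given $n \geq N$, every $n$-block $w \in B_n(X)$ restricts to an $N$-block $(w(0), \ldots, w(N-1)) \in B_N(X)$, so I can define $\varphi_n : B_n(X) \to B$ by $\varphi_n(w) := \varphi(w(0), \ldots, w(N-1))$, and then $f(x)(k) = \varphi_n(x(k), \ldots, x(k+n-1))$ exhibits $f$ as an $n$-sliding block code. The main (but routine) obstacle is the compactness argument that turns the pointwise continuity of $f$ into a uniform $N$ working across all of $X$ simultaneously; everything else is bookkeeping once that $N$ is in hand.
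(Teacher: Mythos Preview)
Your proposal is correct and follows essentially the same route as the paper: pull back the clopen partition $\{C_b(Y)\}_{b\in B}$ along $f$, use compactness to find a single $N$ so that the $N$-cylinder partition refines it, define $\varphi$ from that refinement, and then propagate to all positions via $f\circ\sigma=\sigma\circ f$. The only cosmetic difference is that the paper phrases the compactness step as ``there is $N$ such that $\{C_w(X):w\in A^N\}$ refines $\{f^{-1}(C_b(Y))\}$'' rather than ``each $f^{-1}(C_b(Y))$ is a finite union of $N$-cylinders'', and it briefly disposes of the degenerate case $X=\emptyset$ or $Y=\emptyset$ up front.
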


For completeness, we add a proof of this in appendix~\ref{app: proofs from section algeboidal categories of deterministic systems}. 
Next, we need the simple but important observation that generating new systems with shifts does \emph{not} add any new shifts.

\begin{lemma}
\label{lem: shift in S iff limit of shifts in S}
Let $S$ be a collection of shifts and let $X$ be a shift. Then 
$X$ is in $\DSef(S)$ iff $X$ is isomorphic to a system in $S$.
\end{lemma}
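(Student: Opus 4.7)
My plan is to split the biconditional into its two directions; the $(\Leftarrow)$ direction will be immediate, while the $(\Rightarrow)$ direction will reduce cleanly to the category-theoretic finiteness of shifts in $\detSysef$ that was already established in Corollary~\ref{cor: detSysef is semi algebroidal}.

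For $(\Leftarrow)$, if $X$ is isomorphic to some $Y \in S$, I would observe that $Y$ is the colimit (in $\DSef(S)$) of the constant $\omega$-chain at $Y$ with identity bonding morphisms, so $Y \in \DSef(S)$ by definition. Since the defining property of $\DSef(S)$ is invariant under isomorphism of the colimit vertex, $X \in \DSef(S)$ as well.

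For $(\Rightarrow)$, suppose $X$ is a shift belonging to $\DSef(S)$, so that $X$ arises as the colimit of some $\omega$-chain $\big( (X_i,T_i), (e_{i i+1}, f_{i i+1}) \big)$ with each $X_i \in S$. The key observation I plan to exploit is that the explicit inverse-limit construction of Proposition~\ref{prop: every chain has colimit} realizes this colimit not only in $\DSef(S)$ but equally in the ambient category $\detSysef$, because the inverse limit of deterministic dynamics is again deterministic, so $\detSysef$ inherits property~(\ref{itm: algebroidal limit closure}) of Theorem~\ref{thm: algebroidal category of systems}. With that in hand, Corollary~\ref{cor: detSysef is semi algebroidal} gives that $X$, being a shift, is category-theoretically finite in $\detSysef$. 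Combining this with the monicity of every morphism in $\detSysef$ (Proposition~\ref{prop: all morphisms are monic}), Lemma~\ref{lem: finite colimit is isomorphic to object in diagram} then yields that some bonding map $f_i : X_i \to X$ is an isomorphism, so $X \cong X_i \in S$ as required.

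The only point where I expect to need care is precisely the assertion that the colimit taken in $\DSef(S)$ agrees with the colimit taken in $\detSysef$---two different ambient categories can a priori yield two different colimits of the same diagram. However, this coincidence should follow from the concreteness of the construction in Proposition~\ref{prop: every chain has colimit}: the vertex $(X,T)$ and the cocone maps $f_i$ are built out of the diagram alone, independently of the surrounding category, so the resulting cocone satisfies the universal property against cocones into any object of $\detSysef$ whose underlying data is obtained the same way. Once this is noted, the rest of the argument is a direct invocation of results from the preceding section.
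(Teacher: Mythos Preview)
Your proposal is correct and follows essentially the same skeleton as the paper: invoke the category-theoretic finiteness of shifts, monicity of all morphisms, and then Lemma~\ref{lem: finite colimit is isomorphic to object in diagram}. The one difference is the ambient category in which you run the argument. You work in $\detSysef$ (via Corollary~\ref{cor: detSysef is semi algebroidal}) and therefore need to verify that the colimit defining $X\in\DSef(S)$ is also a colimit in $\detSysef$; the paper instead stays inside $\DSef(S)$ throughout, invoking Lemma~\ref{lem: det sys cat finite iff fin gen} directly---note that the forward direction of that lemma holds for \emph{any} full subcategory $\DSef$ of $\detSysef$, so $X$ is category-theoretically finite in $\DSef(S)$ itself, and Lemma~\ref{lem: finite colimit is isomorphic to object in diagram} applies there without needing to transport the colimit. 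This sidesteps the very concern you flag in your last paragraph. (Incidentally, that concern is also less serious than you fear: by Definition~\ref{def: DSef} the colimit is taken in $\detSysef$ to begin with, and since $\DSef(S)$ is a full subcategory containing both the diagram and the vertex, the colimit is automatically one in $\DSef(S)$ as well.)
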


\begin{proof}
The right-to-left implication is trivial. For the other direction, assume $X$ is a colimit of an $\omega$-chain of shifts $X_i$ in $S$. By lemma~\ref{lem: det sys cat finite iff fin gen}, since $X$ is a shift---and hence has a finite topological generator---$X$ is category-theoretically finite in $\DSef(S)$. 
Since all morphisms in $\DSef(S)$ are monic (proposition~\ref{prop: all morphisms are monic}), lemma~\ref{lem: finite colimit is isomorphic to object in diagram} implies that $X$ hence is isomorphic to some $X_i$, which is in $S$.
\end{proof}

Finally, we need a basic category-theoretic fact: that $\DSef(S)$ is closed under taking colimits of $\omega$-chains. In other words, a colimit of systems which are colimits of shifts is itself a colimit of shifts. That is well-known for Ind-completions~\cite[ch.~6]{Kashiwara2006}. A sketch of an elementary proof is in appendix~\ref{app: proofs from section algeboidal categories of deterministic systems}.

\begin{lemma}
\label{lem: pro category closed under limits}
Let $\mathsf{C}$ be a category.
Let $A$ be a colimit of an $\omega$-chain with objects $A_i$.
Assume each $A_i$ is, in turn, a colimit of an $\omega$-chain with objects $A_i^j$, such that each $A_i^j$ is category-theoretically finite. 
Then $A$ also is a colimit of an $\omega$-chain with objects that are among the $A_i^j$'s.
\end{lemma}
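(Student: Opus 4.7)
The plan is to construct a ``diagonal'' $\omega$-chain $(B_n)$ out of the doubly indexed family $(A_i^j)$ whose colimit recovers $A$. Let $\phi_i : A_i \to A$ and $f_{ii'} : A_i \to A_{i'}$ (for $i \leq i'$) denote the outer coprojections and transitions, and similarly $\psi_i^j : A_i^j \to A_i$ and $g_i^{jj'} : A_i^j \to A_i^{j'}$ for the inner chains. I define $B_n := A_n^{j_n}$ inductively: set $j_0 := 0$, and given $B_n$, apply category-theoretic finiteness of $A_n^{j_n}$ to the morphism $f_{n,n+1} \circ \psi_n^{j_n} : A_n^{j_n} \to A_{n+1}$ (where $A_{n+1}$ is the colimit of its inner chain $(A_{n+1}^k)_k$) to obtain some index $j_{n+1}$ and a morphism $b_n : B_n \to B_{n+1}$ satisfying $\psi_{n+1}^{j_{n+1}} \circ b_n = f_{n,n+1} \circ \psi_n^{j_n}$. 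The uniqueness clause in the definition of finiteness allows the $j_{n+1}$ to be enlarged arbitrarily (by postcomposing along the inner chain of $A_{n+1}$), so I arrange $j_n \to \infty$. The maps $\lambda_n := \phi_n \circ \psi_n^{j_n}$ then form a cocone $(B_n) \to A$.

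For universality, let $(C, c_n : B_n \to C)$ be any other cocone. By the universal properties of $A$ as colimit of $(A_i)$ and of each $A_i$ as colimit of $(A_i^j)$, a mediating $u : A \to C$ corresponds to a family of compatible morphisms $\delta_i^j : A_i^j \to C$. To construct $\delta_i^j$, I apply category-theoretic finiteness of $A_i^j$ twice: once to $\phi_i \circ \psi_i^j : A_i^j \to A$ to factor through some $A_m$, and once to the resulting $A_i^j \to A_m$ to factor through some $A_m^k$. Choosing $m$ large enough that $k \leq j_m$ (possible because $j_m \to \infty$), I postcompose with $g_m^{k, j_m}$ to obtain $h : A_i^j \to B_m$ with $\beta_m \circ h = f_{i,m} \circ \psi_i^j$, where $\beta_m := \psi_m^{j_m}$. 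Set $\delta_i^j := c_m \circ h$. Independence from the choice of $m$ and compatibility in $i, j$ both follow from the uniqueness clause: for two admissible $m \leq m'$ the uniqueness forces $b_{m,m'} \circ h = h'$ (where $b_{m,m'}$ is the composite chain map $B_m \to B_{m'}$), so $c_{m'} \circ h' = c_{m'} \circ b_{m,m'} \circ h = c_m \circ h$ by the cocone condition on $(c_n)$. Uniqueness of $u$ follows from the same considerations applied to $u \circ \lambda_n = c_n$.

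The main obstacle will be the book-keeping in the second step: arranging the diagonal indices $j_n$ to grow fast enough to accommodate every $A_i^j$'s factorization, and invoking uniqueness in the definition of ``category-theoretically finite'' to glue the locally constructed $\delta_i^j$'s into a single coherent morphism. Beyond this, the argument is a standard cofinality pattern showing that the diagonal $(A_n^{j_n})_n$ is cofinal in the bi-indexed system $(A_i^j)$, paralleling the well-known proof that $\mathrm{Ind}$-completions are closed under countable filtered colimits alluded to in Remark~\ref{rmk: ind and pro completion}.
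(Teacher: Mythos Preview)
Your overall approach—extracting a diagonal $\omega$-chain from the doubly indexed family and showing its colimit is $A$—is the same as the paper's, and your identification of the main obstacle (making the diagonal indices grow fast enough) is accurate. However, the universality step as written has a genuine gap.

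The problematic claim is ``Choosing $m$ large enough that $k \leq j_m$ (possible because $j_m \to \infty$)''. Here $k$ depends on $m$: after factoring $A_i^j \to A$ through $A_m$, you then factor the resulting $A_i^j \to A_m$ through $A_m^k$, and the minimal admissible $k = k(m)$ may grow with $m$. So $j_m \to \infty$ alone does not guarantee that some $m$ satisfies $k(m) \leq j_m$. Your inductive construction of $j_{n+1}$ invokes only the finiteness of $B_n = A_n^{j_n}$, so it controls how the \emph{diagonal itself} threads through $A_{n+1}$, but says nothing about how an arbitrary $A_i^j$ with $j > j_i$ reaches the diagonal. In other words, merely arranging $j_n\to\infty$ does not yet make $(B_n)$ cofinal in the bi-indexed system; you correctly name this as the main obstacle, but the construction you actually carry out does not overcome it.

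The paper addresses this by building a full commuting grid rather than just the diagonal: for each $i$ it constructs a strictly increasing sequence $(k_i^j)_{j\in\omega}$ together with horizontal maps $g_{i,i+1}^j : A_i^{k_i^j} \to A_{i+1}^{k_{i+1}^j}$. Then any $A_i^j$ is dominated by some $A_i^{k_i^l}$ (since $k_i^l\to\infty$), which connects via the grid to the diagonal entry $A_m^{k_m^m}$ for $m \geq \max(i,l)$. An equally valid fix within your sparser construction would be: at each step $n+1$, enlarge $j_{n+1}$ not only to accommodate the factorization of $B_n \to A_{n+1}$ but also to absorb the finitely many $A_i^j$ with $i,j \leq n$ (each gives a map $f_{i,n+1}\circ\psi_i^j : A_i^j \to A_{n+1}$ which, by finiteness, factors through some $A_{n+1}^k$; take $j_{n+1}$ above all such $k$). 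With either the grid or this enhanced bookkeeping, your universality argument goes through as sketched.
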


Now we are ready to prove theorem~\ref{thm: generating algebroidal categories of det sys}.

\begin{proof}[Proof of theorem~\ref{thm: generating algebroidal categories of det sys}]
We want to apply theorem~\ref{thm: category of det sys semialgebroidal}, so we need to show that $\DSef(S)$ satisfies~\ref{itm: det sys algebroidal limit closure} and~\ref{itm: det sys algebroidal limit of shifts}.
The former follows from lemma~\ref{lem: pro category closed under limits}
and the latter is satisfied by construction.

Now theorem~\ref{thm: category of det sys semialgebroidal} entails that $\DSef(S)$ is a semi-algebroidal category and its category-theoretic finite objects are precisely those systems in $\DSef(S)$ that are shifts. 
By lemma~\ref{lem: shift in S iff limit of shifts in S}, these, in turn, are, up to isomorphism, exactly the elements of $S$, as needed.

To show that $\DSef(S)$ is algebroidal, we need to show that (a)~it contains, up to isomorphism, only countably many finite objects, and (b)~between any two finite objects there are only countably many morphisms. Now (a)~follows from the assumption that $S$ is countable and the just proven fact that, up to isomorphism, $S$ contains all the finite objects. Moreover, (b)~follows from lemma~\ref{lem: Curtis-Lyndon-Hedlund Theorem}: If $X$ and $Y$ are two finite objects, they are isomorphic to shifts, and between shifts there are only countably many factors, because each such factor is given by a block map, which is a finite object.

Finally, the necessary and sufficient conditions for the existence of a universal and homogeneous system simply is theorem~\ref{thm: category theoretic fraisse jonsson}.
\end{proof}

\section{The universal proshift}
\label{sec: proshifts}

Theorem~\ref{thm: generating algebroidal categories of det sys} provides a general strategy to build algebroidal categories of deterministic systems: first, choose a countable class of shifts, and then close it under (co-) limits along $\omega$-chains. As mentioned in example~\ref{exm: symbolic dynamics}, the two most studied classes of shifts are the class $\SFT$ of shifts of finite type and the class $\SOF$ of sofic shifts~\cite[ch.~2 and~3, respectively]{Lind1995}. Indeed, both $\SFT$ and $\SOF$ are countable (as they are characterized by finite graphs).
In this section, we apply our strategy to those classes of shifts and obtain our second universality result, namely theorem~\ref{thm: universal nondeterministic system} stating the existence of the universal proshift, i.e., the deterministic system that is the universal and homogeneous system both in $\DSef(\SFT)$ and in $\DSef(\SOF)$.
The proof is in subsection~\ref{ssec: the universality result for proshifts}. Then subsection~\ref{ssec: the nature of the universal proshifts} investigates the universal proshift and avenues for future work.

\subsection{Proof of theorem~\ref{thm: universal and homogeneous proshifts}}
\label{ssec: the universality result for proshifts}

First, let us give a name to $\DSef(\SFT)$ and $\DSef(\SOF)$.

\begin{definition}
\label{def: proshifts of finite type and sofic proshifts}
We call $\DSef(\SFT)$ the category of \emph{$\omega$-proshifts of finite type}, and we call $\DSef(\SOF)$ the category of \emph{sofic $\omega$-proshifts}.\footnote{One might also call them `shifts of profinite type' and `prosofic shifts', respectively, but this would falsely suggests that they are shifts (example~\ref{exm: proshift which is not shift} describes a proshift which is not a shift).}
\end{definition}

From theorem~\ref{thm: generating algebroidal categories of det sys}, we know that $\DSef(\SFT)$ and $\DSef(\SOF)$ are algebroidal. The next lemma is the key to show the joint embedding property and the amalgamation property.

\begin{lemma}
\label{lem: amalgamation for shifts of finite type}
Let $X,Y_0, Y_1$ be shifts over alphabets $A,B_0,B_1$, respectively, and let $f_0 : Y_0 \to X$ and $f_1 : Y_1 \to X$ be factors. 
For $y_0 \in Y_0$ and $y_1 \in Y_1$, let $\langle y_0 ,y_1 \rangle$ be the sequence whose $n$-th element is $(y_0 (n) , y_1 (n))$, so $\langle y_0 ,y_1 \rangle \in (B_0 \times B_1)^\omega$. 
Write 
\begin{align*}
Y_0 \times_{f_0 ,f_1} Y_1 
:= 
\big\{  
\langle y_0,y_1 \rangle 
: 
y_0 \in Y_0, y_1 \in Y_1 , f_0(y_0) = f_1(y_1)
\big\}.
\end{align*}
Then we have the following:
\begin{enumerate}
\item
\label{lem: amalgamation for shifts of finite type 1}
$Y_0 \times_{f_0 ,f_1} Y_1$ is a shift over the alphabet $B_0 \times B_1$.
\item
\label{lem: amalgamation for shifts of finite type 2}
If $Y_0$ and $Y_1$ are shifts of finite type, so is $Y_0 \times_{f_0 ,f_1} Y_1$.
\item
\label{lem: amalgamation for shifts of finite type 3}
For $i \in \{ 0,1\}$, the function
$g_i : Y_0 \times_{f_0 ,f_1} Y_1 \to Y_i$,
which maps $\langle y_0,y_1 \rangle$ to $y_i$,
is a factor.
\end{enumerate}

\end{lemma}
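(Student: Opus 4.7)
The plan is to handle the three parts in order, noting that each relies on a different standard fact: (1) topological closure and equivariance for the closed-subshift criterion; (2) the Curtis--Lyndon--Hedlund Theorem plus the characterization of shifts of finite type by forbidden words of a fixed length; (3) surjectivity of $f_1$ (resp.\ $f_0$) to lift preimages across the pullback square.

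For part~\ref{lem: amalgamation for shifts of finite type 1}, I would first recall that a subset of a full shift is a shift iff it is closed and shift-invariant. Shift-invariance of $Y_0 \times_{f_0,f_1} Y_1$ is immediate once one observes that the shift acts coordinatewise, i.e., $\sigma \langle y_0, y_1 \rangle = \langle \sigma y_0, \sigma y_1 \rangle$, and that both $f_0$ and $f_1$ commute with $\sigma$ (they are factors, hence equivariant). For closedness, I would write $Y_0 \times_{f_0,f_1} Y_1$ as the preimage of the diagonal of $X \times X$ under the continuous map $(y_0,y_1) \mapsto (f_0(y_0), f_1(y_1))$, intersected with $Y_0 \times Y_1$. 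Since $X$ is Hausdorff, the diagonal is closed and so is the preimage; intersection with the closed set $Y_0 \times Y_1$ finishes the argument.

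Part~\ref{lem: amalgamation for shifts of finite type 2} is the main obstacle and requires choosing a single ``window size'' $M$ that is large enough for four simultaneous purposes. By lemma~\ref{lem: Curtis-Lyndon-Hedlund Theorem}, each $f_i$ is an $N_i$-sliding block code for some $N_i$ and moreover remains a sliding block code for any larger window. Since $Y_0$ and $Y_1$ are shifts of finite type, they each admit a description via forbidden words of some bounded length. Taking $M$ to dominate all these lengths, I would describe $Y_0 \times_{f_0,f_1} Y_1$ via the following explicit finite set of forbidden $M$-words over $B_0 \times B_1$: a pair $(w_0, w_1) \in B_0^M \times B_1^M$ is forbidden iff $w_0 \notin B_M(Y_0)$, or $w_1 \notin B_M(Y_1)$, or $w_i \in B_M(Y_i)$ for $i = 0,1$ while the block maps disagree, $\varphi_0(w_0) \neq \varphi_1(w_1)$. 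The verification that avoidance of this finite set is equivalent to membership in the pullback amounts to unpacking the $M$-step SFT characterization of $Y_0, Y_1$ and the sliding-block description of $f_0(y_0) = f_1(y_1)$ coordinatewise.

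Part~\ref{lem: amalgamation for shifts of finite type 3} is routine. Each $g_i$ is continuous (it is the restriction of a coordinate projection), equivariant (the shift acts coordinatewise), and surjective: given $y_0 \in Y_0$, the factor $f_1$ is surjective, so there exists $y_1 \in Y_1$ with $f_1(y_1) = f_0(y_0)$, and then $\langle y_0, y_1 \rangle \in Y_0 \times_{f_0,f_1} Y_1$ with $g_0\langle y_0, y_1 \rangle = y_0$. The symmetric argument handles $g_1$, so both projections are factors of deterministic systems, as required.
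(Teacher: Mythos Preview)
Your argument is correct. The route differs from the paper's mainly in how parts~\ref{lem: amalgamation for shifts of finite type 1} and~\ref{lem: amalgamation for shifts of finite type 2} are organized. The paper handles both simultaneously via forbidden words: it fixes defining forbidden sets $F_0,F_1$ for $Y_0,Y_1$, lifts them to $\overline{F}_0,\overline{F}_1\subseteq (B_0\times B_1)^{<\omega}$, adds a finite set $\overline{F}$ encoding the block-map disagreement $\varphi_0(w_0)\neq\varphi_1(w_1)$, and shows that $Z$ is exactly the set of sequences avoiding $\overline{F}_0\cup\overline{F}_1\cup\overline{F}$. Part~\ref{lem: amalgamation for shifts of finite type 2} then falls out immediately, since finiteness of $F_0,F_1$ forces the union to be finite. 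You instead separate concerns: a clean topological argument (preimage of the diagonal) for part~\ref{lem: amalgamation for shifts of finite type 1}, and then a uniform-window combinatorial description only where it is needed, in part~\ref{lem: amalgamation for shifts of finite type 2}. Your choice of a single $M$ dominating both the SFT step lengths and the sliding-block windows is a tidy way to package the paper's three forbidden sets into one list of $M$-words. For part~\ref{lem: amalgamation for shifts of finite type 3} the two proofs coincide. One small point worth making explicit in your write-up of part~\ref{lem: amalgamation for shifts of finite type 1}: the identification $(B_0)^\omega\times(B_1)^\omega\cong(B_0\times B_1)^\omega$ via $(y_0,y_1)\mapsto\langle y_0,y_1\rangle$ is a homeomorphism, so closedness transfers from the fiber product in $Y_0\times Y_1$ to the subset of $(B_0\times B_1)^\omega$ that the lemma actually names.
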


\begin{proof}
Write $Z := Y_0 \times_{f_0 ,f_1} Y_1$.
For $w_0 \in B_0^n$ and $w_1 \in B_1^n$, we also write $\langle w_0 , w_1 \rangle$ for the word in $(B_0 \times B_1)^n$ whose $k$-th element is $(w_0 (k) , w_1 (k))$.
For $i \in \{0,1\}$, let $F_i$ be a set of $B_i$-words such that $Y_i$ is the set of $B_i$-sequences that avoid $F_i$.\footnote{It is a basic fact that shifts can equivalently be defined via a set $F$ of avoided words~\cite[p.~5 and~27]{Lind1995}. (The special case where $F$ is finite yields the shifts of finite type.)}
Define: 
\begin{align*}
\overline{F}_0
&:=
\bigcup_{w_0 \in F_0}
\big\{
\langle w_0 , w_1 \rangle : w_1 \in B_1^{|w_0|}
\big\}
&
\overline{F}_1
&:=
\bigcup_{w_1 \in F_1}
\big\{
\langle w_0 , w_1 \rangle : w_0 \in B_0^{|w_1|}
\big\}.\footnotemark
\end{align*}\footnotetext{Here $|w|$ denotes the length of word $w$.}
By lemma~\ref{lem: Curtis-Lyndon-Hedlund Theorem}, $f_0$ and $f_1$ are $N$-sliding block codes for some sufficiently big $N$. Let $\varphi_0 : B_N(Y_0) \to A$ and $\varphi_1 : B_N(Y_1) \to A$ be their respective block maps.
For $(a, a') \in A \times A$ with $a \neq a'$, define
\begin{align*}
\overline{F}_{(a,a')} 
:=
\big\{
\langle w_0 , w_1 \rangle 
:
w_0 \in \varphi_0^{-1}(a), w_1 \in \varphi_1^{-1}(a')
\big\}
\subseteq 
(B_0 \times B_1)^N.
\end{align*}
Let $\overline{F}$ be the union of all the $\overline{F}_{(a, a')}$ for $(a,a') \in A \times A$ with $a \neq a'$.
Now it is straightforward to show
\begin{align*}
Z 
=
\big\{
z \in (B_0 \times B_1)^\omega
:
z \text{ avoids } \overline{F}_0 \cup \overline{F}_1 \cup \overline{F}  
\big\}.
\end{align*}
So~\ref{lem: amalgamation for shifts of finite type 1} follows: $Z$ is a shift qua set of sequence defined by avoiding a set of words.

Regarding~\ref{lem: amalgamation for shifts of finite type 2}, if $Y_0$ and $Y_1$ are shifts of finite type, then $F_0$ and $F_1$ can be chosen to be finite, so also $\overline{F}_0 \cup \overline{F}_1 \cup \overline{F}$ is finite, rendering $Z$ a shift of finite type, as needed.

Regarding~\ref{lem: amalgamation for shifts of finite type 3}, we show the claim for $i = 0$ (the case $i = 1$ is symmetric). It is straightforward to check that $g_0$ is continuous and surjective, and, for equivariance, we have
\begin{align*}
g_0 \big( \sigma ( \langle y_0 , y_1 \rangle ) \big)
=
g_0 \big( \langle \sigma(y_0) , \sigma(y_1) \rangle \big)
=
\sigma ( y_0 ) 
=
\sigma \big( g_0 ( \langle y_0 , y_1 \rangle ) \big),
\end{align*}
so $g_0 \circ \sigma = \sigma \circ g_0$, as needed.
\end{proof}

\begin{proposition}
\label{prop: amalgamation of shifts}
The following collections of shifts satisfy both the joint embedding property~\ref{itm: joint embedding shifts} and the amalgamation property~\ref{itm: amalgamation shifts} of theorem~\ref{thm: generating algebroidal categories of det sys}:
\begin{enumerate}
\item
the collection $\SHI$ of all shifts,

\item
the collection $\SFT$ of all shifts of finite type,

\item
the collection $\SOF$ of all sofic shifts.
\end{enumerate}
Moreover, $\SFT$ and $\SOF$ are countable.
\end{proposition}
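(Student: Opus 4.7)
The plan is to reduce both properties to (one or two applications of) Lemma~\ref{lem: amalgamation for shifts of finite type}. For countability, I would argue that, up to renaming of the alphabet, every shift of finite type is determined by a finite alphabet together with a finite list of forbidden words, of which there are countably many; hence $\SFT$ is countable. Every sofic shift is, by definition, the image of an SFT under a factor, and Lemma~\ref{lem: Curtis-Lyndon-Hedlund Theorem} identifies each such factor with a sliding block code given by a finite block map; since both the set of SFTs and the set of block maps are countable, $\SOF$ is countable. The amalgamation property for $\SHI$ and $\SFT$ follows directly from Lemma~\ref{lem: amalgamation for shifts of finite type}: given factors $f_i : Y_i \to X$ in the respective class, the fibered product $Y_0 \times_{f_0,f_1} Y_1$ is the required amalgam, since it is a shift by part~(\ref{lem: amalgamation for shifts of finite type 1}), of finite type when the $Y_i$ are by part~(\ref{lem: amalgamation for shifts of finite type 2}), and carries the desired factors by part~(\ref{lem: amalgamation for shifts of finite type 3}).

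The case of amalgamation in $\SOF$ is the only one requiring additional work. Given sofic $X, Y_0, Y_1$ with factors $f_i : Y_i \to X$, I would lift to SFT covers: choose SFTs $\tilde Y_i$ and factors $\pi_i : \tilde Y_i \to Y_i$ witnessing soficity, and form the SFT amalgam $\tilde Z := \tilde Y_0 \times_{f_0 \pi_0,\, f_1 \pi_1} \tilde Y_1$ by part~(\ref{lem: amalgamation for shifts of finite type 2}) of Lemma~\ref{lem: amalgamation for shifts of finite type}. Then I would verify that the componentwise map
\[
\Pi : \tilde Z \to Y_0 \times_{f_0,f_1} Y_1, \qquad \langle \tilde y_0, \tilde y_1 \rangle \mapsto \langle \pi_0(\tilde y_0), \pi_1(\tilde y_1) \rangle,
\]
is a factor: it is well-defined because $f_0\pi_0 = f_1\pi_1$ on $\tilde Z$; continuous and shift-equivariant because $\pi_0, \pi_1$ are sliding block codes; and surjective because, given $\langle y_0, y_1 \rangle$ in the codomain, any preimage choices $\tilde y_i \in \pi_i^{-1}(y_i)$ automatically satisfy $f_0 \pi_0(\tilde y_0) = f_0(y_0) = f_1(y_1) = f_1 \pi_1(\tilde y_1)$, so $\langle \tilde y_0, \tilde y_1 \rangle \in \tilde Z$. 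Thus $Y_0 \times_{f_0,f_1} Y_1$ is a factor of an SFT, hence sofic.

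For the joint embedding property, I would use the one-point shift $\mathbf{1}$ over a singleton alphabet, which belongs to all three classes and onto which every shift factors via the unique constant map. So, given $X_0, X_1$ in any of the three classes, applying the already-established amalgamation property to the pair of constant factors $X_i \to \mathbf{1}$ yields an object of the same class equipped with factors onto both $X_0$ and $X_1$, as required by~(\ref{itm: joint embedding shifts}). The main obstacle is the $\SOF$ case of amalgamation, where one has to lift to SFT covers and verify that the induced componentwise map really is a factor; everything else is either a direct application of Lemma~\ref{lem: amalgamation for shifts of finite type} or a straightforward bookkeeping consequence of the Curtis--Lyndon--Hedlund theorem.
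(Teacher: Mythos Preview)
Your proposal is correct and follows essentially the same route as the paper: reduce JEP to AP via the one-point shift, invoke Lemma~\ref{lem: amalgamation for shifts of finite type} directly for $\SHI$ and $\SFT$, and for $\SOF$ lift to SFT covers and form the SFT fiber product $\tilde Z$. The only difference is that the paper takes $\tilde Z$ itself (which lies in $\SFT \subseteq \SOF$) as the amalgam, with factors $\pi_i$ composed with the covers $\tilde Y_i \to Y_i$, so your verification that $\Pi : \tilde Z \to Y_0 \times_{f_0,f_1} Y_1$ is surjective---while correct---is an unnecessary detour.
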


\begin{proof}
Both $\SFT$ and $\SOF$ are countable because they can be represented by finite graphs and finite labeled graphs, respectively. All three collections contain the trivial shift (i.e., the full shift over the one-element alphabet), which is a factor of any shift. Hence the joint-embedding property follows from the amalgamation property, and it remains to show the latter.
For $\SHI$ and $\SFT$, this immediately follows from lemma~\ref{lem: amalgamation for shifts of finite type}. So let us show it for $\SOF$.

Let $X,Y_0, Y_1 \in \SOF$ with factors $f_0 : Y_0 \to X$ and $f_1 : Y_1 \to X$.
Since sofic shifts are factors of shifts of finite type, there are $Z_0, Z_1 \in \SFT$ with factors $h_0 : Z_0 \to Y_0$ and $h_1 : Z_1 \to Y_1$. Now we can apply lemma~\ref{lem: amalgamation for shifts of finite type} to $f_0 \circ h_0 : Z_0 \to X$ and $f_1 \circ h_1 : Z_1 \to X$. So we have $Z := Z_0 \times_{f_0 \circ h_0 , f_1 \circ h_1} Z_1 \in \SFT$ with factors $\pi_0 : Z \to Z_0$ and $\pi_1 : Z \to Z_1$ such that $(f_0 \circ h_0) \circ \pi_0 = (f_1 \circ h_1) \circ \pi_1$.
In particular, $Z$ is in $\SOF$ (since $\SFT \subseteq \SOF$) and we have factors
$g_0 :=  h_0 \circ \pi_0 : Z \to Y_0$
and
$g_1 :=  h_1 \circ \pi_1 : Z \to Y_1$
with 
$f_0 \circ g_0 = f_1 \circ g_1$, as needed.
\end{proof}

Hence both $\DSef(\SFT)$ and $\DSef(\SOF)$ have a universal and homogeneous object. To establish theorem~\ref{thm: universal nondeterministic system} it remains to show that these two objects are isomorphic:

\begin{proposition}
\label{prop: isomorphism for universal proshifts}
Let $(X,T)$ be the universal and homogeneous $\omega$-proshift of finite type and let $(Y,S)$ be the universal and homogeneous sofic $\omega$-proshift. Then $(X,T)$ and $(Y,S)$ are isomorphic.  
\end{proposition}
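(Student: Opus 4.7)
The plan is to show that $(X,T)$ itself is $\DSef(\SOF)$-universal and $\SOF$-homogeneous, so that the uniqueness clause of theorem~\ref{thm: category theoretic fraisse jonsson}, applied to the category $\DSef(\SOF)$, forces $(X,T) \cong (Y,S)$. Note that $(X,T) \in \DSef(\SOF)$ since $\SFT \subseteq \SOF$ and hence $\DSef(\SFT) \subseteq \DSef(\SOF)$.

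For $\DSef(\SOF)$-universality, let $(Z,R)$ be a sofic $\omega$-proshift, presented as the colimit of an $\omega$-chain of sofic shifts $Z_i$ with connecting factors $\sigma_{i+1,i} : Z_{i+1} \to Z_i$. I would inductively lift this to an $\omega$-chain of SFTs $W_i$ equipped with factors $\phi_i : W_i \to Z_i$ making the evident squares commute: start by choosing any SFT $W_0$ factoring onto $Z_0$ (available since $Z_0$ is sofic), and at each step amalgamate $\phi_i : W_i \to Z_i$ with a chosen factor $W^*_{i+1} \to Z_{i+1} \to Z_i$ over $Z_i$, using the SFT amalgamation of lemma~\ref{lem: amalgamation for shifts of finite type}(2). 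Taking the colimit yields a system $(W,U) \in \DSef(\SFT)$, and the compatible $\phi_i$'s induce a factor $(W,U) \to (Z,R)$ (surjectivity by a routine compactness / finite-intersection argument on the inverse-limit coordinates). Composing with the factor $(X,T) \to (W,U)$ supplied by the $\DSef(\SFT)$-universality of $(X,T)$ produces the required factor $(X,T) \to (Z,R)$.

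For $\SOF$-homogeneity, let $A$ be a sofic shift and $f,f' : (X,T) \to A$ two factors. The crucial observation is that \emph{any} factor from $X$ to any shift $A \subseteq A_0^\omega$ factors through one of the canonical projections $\rho_i : X \to X_i$ coming from the expression of $X$ as a colimit of an $\omega$-chain of SFTs $X_0 \to X_1 \to \ldots$. Indeed, $f$ is determined by its $0$-th coordinate map $f_0 : X \to A_0$ (a continuous map into a finite discrete set), whose preimages form a finite clopen partition of $X$; by lemma~\ref{lem: refine partitions} some $\rho_i$ refines that partition, so $f_0$, and hence $f$, factors as $f = f_i \circ \rho_i$ for a factor $f_i : X_i \to A$. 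Applying this to both $f$ and $f'$ and passing to a common large enough index, write $f = f_i \circ \rho_i$ and $f' = f'_i \circ \rho_i$. Form the SFT fibre product $C := X_i \times_{f_i, f'_i} X_i$ (an SFT by lemma~\ref{lem: amalgamation for shifts of finite type}(2)) with projection factors $p, p' : C \to X_i$ satisfying $f_i \circ p = f'_i \circ p' =: \alpha$. Since $(X,T)$ is both $\DSef(\SFT)$-universal and $\SFT$-homogeneous, it enjoys the SFT extension property (a routine consequence of the two); use it to lift $\rho_i$ along $p$ and along $p'$ to factors $\hat f, \hat f' : X \to C$ with $p \circ \hat f = \rho_i = p' \circ \hat f'$. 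Then $\alpha \circ \hat f = f_i \circ p \circ \hat f = f_i \circ \rho_i = f$, and similarly $\alpha \circ \hat f' = f'$. Finally, invoke $\SFT$-homogeneity on $\hat f, \hat f' : X \to C$ (where $C$ is an SFT) to produce an automorphism $\varphi$ of $(X,T)$ with $\hat f = \hat f' \circ \varphi$; composing with $\alpha$ gives $f = f' \circ \varphi$, as required.

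I expect the main obstacle to be the factoring-through observation in the homogeneity step, i.e., that every factor $X \to A$ into a shift already factors through one of the finite projections $\rho_i$. Once available, it reduces $\SOF$-homogeneity of $(X,T)$ cleanly to the SFT amalgamation and the $\SFT$-homogeneity we already have; the remaining arguments are standard Fra\"{i}ss\'{e}-theoretic bookkeeping.
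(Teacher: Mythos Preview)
Your proof is correct and takes a genuinely different route from the paper's. The paper proceeds by a direct back-and-forth: it writes $(X,T)$ and $(Y,S)$ as colimits of $\omega$-chains in $\SFT$ and $\SOF$ respectively, then, using $\SFT$-saturation of $(X,T)$ and $\SOF$-saturation of $(Y,S)$ together with the fact that every sofic shift is a factor of an SFT, builds an interleaving ``zig-zag'' diagram of SFTs $(Z_k,R_k)$ sitting between the two chains; the induced mediating factors $u:(X,T)\to(Y,S)$ and $v:(Y,S)\to(X,T)$ are then checked to be mutual inverses by diagram chasing. You instead show that $(X,T)$ already satisfies the two properties characterizing $(Y,S)$ in $\DSef(\SOF)$, and invoke the uniqueness clause of theorem~\ref{thm: category theoretic fraisse jonsson}. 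Your approach is arguably more conceptual: it explains \emph{why} the two universal objects coincide (the $\SFT$-universal object is automatically $\SOF$-universal and $\SOF$-homogeneous), whereas the paper's argument exhibits the isomorphism concretely. The paper's construction, on the other hand, avoids having to verify $\SOF$-homogeneity of $(X,T)$ separately.

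Two remarks on your write-up. First, your ``crucial observation'' that any factor from $X$ into a shift factors through some $\rho_i$ is not an obstacle: it is exactly the statement that shifts are category-theoretically finite, which is lemma~\ref{lem: det sys cat finite iff fin gen} (the forward direction, which holds in any full subcategory of $\detSysef$). Your direct argument via the $0$-th coordinate map is fine too; the passage from ``$f_0$ factors through $\rho_i$'' to ``$f$ factors through $\rho_i$'' uses equivariance of both $f$ and $\rho_i$, and that the resulting map $X_i\to A$ is a factor follows from lemma~\ref{lem: commuting trianlge lemma}. Second, in the universality step, note that lemma~\ref{lem: amalgamation for shifts of finite type}(2) requires only $Y_0,Y_1\in\SFT$, with no hypothesis on the base $X$; so amalgamating over the sofic base $Z_i$ is legitimate, as you implicitly use.
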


\begin{proof}
For the proof we use the concept of a saturated object (from~\cite{Droste1993}).
Let $\mathsf{C}$ be a semi-algebroidal category and let $\mathsf{D}$ be a full subcategory.
An object $U$ in $\mathsf{C}$ is \emph{$\mathsf{D}$-saturated}, if, for any objects $A$ and $B$ in $\mathsf{D}$ and morphisms $f : A \to U$ and $g : A \to B$ in $\mathsf{C}$, there is a morphism $h : B \to U$ in $\mathsf{C}$ such that $h \circ g = f$.  
It is easy to show that an object $U$ that is $\mathsf{D}$-universal and $\mathsf{D}$-homogeneous is $\mathsf{D}$-saturated.
In particular, $(X,T)$ is $\SFT$-saturated and $(Y,S)$ is $\SOF$-saturated.

Let $(X_i,T_i), f_{i i+1})_{i \in \omega}$ be an $\omega$-chain of shifts in $\SFT$ with colimit $((X,T) , f_i)_{i \in \omega}$.
Let $(Y_i,S_i), g_{i i+1})_{i \in \omega}$ be an $\omega$-chain of shifts in $\SOF$ with colimit $((Y,S) , g_i)_{i \in \omega}$.
We construct the diagram in figure~\ref{fig: diagram iso for universal proshifts} inductively as follows. 

\emph{Base step}.
Set $n_0 := 0$ and $m_0 := 0$. By the joint embedding property of $\SOF$, there is a sofic shift $(Z_0, R_0)$ with factors 
$p_0 : (Z_0,R_0) \to (X_{n_0}, T_{n_0})$
and
$q_0 : (Z_0,R_0) \to (Y_{m_0}, S_{m_0})$.
Since sofic shifts are factors of shifts of finite type, we can assume that $(Z_0,R_0)$ in fact is a shift of finite type.

\emph{Inductive step}.
Since $(X,T)$ is $\SFT$-saturated, there is a factor $u : (X,T) \to (Z_0,R_0)$ with $p_0 \circ u = f_{n_0}$. Since the shifts in $\DSef(\SFT)$ are category-theoretically finite, there in particular is $n_1 > n_0$ and a factor $k_0 : (X_{n_1} , T_{n_1}) \to (Z_0, R_0)$ such that $k_0 \circ f_{n_1} = u$. Note that
$
f_{n_0 n_1} \circ f_{n_1} 
=  
f_{n_0}
=
p_0 \circ u
=
p_0 \circ k_0 \circ f_{n_1}
$,
so, since factors are epic (and ef-pairs monic), $f_{n_0 n_1} = p_0 \circ k_0$.
Similarly, we get $m_1 > m_0$ and a factor $l_0 : (Y_{m_1} , S_{m_1}) \to (Z_0, R_0)$ such that $g_{m_0 m_1} = q_0 \circ l_0$.
Now, by the amalgamation property of $\SOF$, there is a sofic shift $(Z_1, R_1)$ with factors 
$p_1 : (Z_1,R_1) \to (X_{n_1}, T_{n_1})$
and
$q_1 : (Z_1,R_1) \to (Y_{m_1}, S_{m_1})$
such that
$k_0 \circ p_1 = l_0 \circ q_1$.
Again, we can assume that $(Z_1,R_1)$ in fact is a shift of finite type.
Now we proceed with the next layers of the diagram.
  
\begin{figure}
\begin{equation*}
\begin{tikzcd}
\vdots
&
\vdots
&
\vdots
\\
(X_{n_2}, T_{n_2})
\arrow[d, swap, "f_{n_1 n_2}"]
\arrow[dr, "k_1"]
&
(Z_2, R_2)
\arrow[l, swap, "p_2"]
\arrow[r, "q_2"]
&
(Y_{m_2}, S_{m_2})
\arrow[d, "g_{m_1 m_2}"]
\arrow[dl, swap, "l_1"]
\\
(X_{n_1}, T_{n_1})
\arrow[d, swap, "f_{n_0 n_1}"]
\arrow[dr, "k_0"]
&
(Z_1, R_1)
\arrow[l, swap, "p_1"]
\arrow[r, "q_1"]
&
(Y_{m_1}, S_{m_1})
\arrow[d, "g_{m_0 m_1}"]
\arrow[dl, swap, "l_0"]
\\
(X_{n_0}, T_{n_0})
&
(Z_0, R_0)
\arrow[l, swap, "p_0"]
\arrow[r, "q_0"]
&
(Y_{m_0}, S_{m_0})
\end{tikzcd}
\end{equation*}
\caption{Diagram to prove that $(X,T)$ and $(Y,S)$ are isomorphic.}
\label{fig: diagram iso for universal proshifts}
\end{figure}

Now, $\big( (X,T), q_i \circ k_i \circ f_{n_{i + 1}} \big)_i$
is a cone to the $\omega$-chain
$\big( (Y_{m_i},S_{m_i}), g_{m_i m_{i + 1}} \big)_i$, 
whose limit is 
$\big( (Y,S) , g_{m_i} \big)_i$.
So there is a factor 
$u : (X,T) \to (Y,S)$
such that
$g_{m_i} \circ u = q_i \circ k_i \circ f_{n_{i + 1}}$.
Similarly, 
$\big( (Y,S), p_i \circ l_i \circ g_{m_{i + 1}} \big)_i$
is a cone to the $\omega$-chain
$\big( (X_{n_i},T_{n_i}), f_{n_i n_{i + 1}} \big)_i$, 
whose limit is 
$\big( (X,T) , f_{n_i} \big)_i$.
So there is a factor 
$v : (Y,S) \to (X,T)$
such that
$f_{n_i} \circ v = p_i \circ l_i \circ g_{m_{i + 1}}$.

We claim that $v \circ u = \id_X$ and $u \circ v = \id_Y$, yielding the desired isomorphism. Indeed, some diagram-chasing shows, for any $i$, that $f_{n_i} \circ v \circ u = f_{n_i}$ and $g_{m_i} \circ u \circ v = g_{m_i}$.
\end{proof}

\subsection{The nature of the universal proshift}
\label{ssec: the nature of the universal proshifts}

We explore the categories $\DSef(\SFT)$ and $\DSef(\SOF)$ and their shared universal and homogeneous object. Then we sketch avenues for further investigation, especially in connection to automata theory.

\emph{Examples}.
To better understand the two categories of proshifts, let's consider some examples. Recall that we can think of the collection $S$ of shifts as the basic building blocks with which we build the $\omega$-proshifts in $\DSef(S)$. Lemma~\ref{lem: shift in S iff limit of shifts in S} told us that when building more complex systems, we do not generate any further shifts. In particular, a sofic shift that is not a shift of finite type---like the even shift~\cite[67]{Lind1995}---is in $\DSef(\SOF)$ but not in $\DSef(\SFT)$. Similarly, a shift that is not sofic---like the context-free shift~\cite[74]{Lind1995}---is neither in $\DSef(\SOF)$ nor in $\DSef(\SFT)$. 
However, what are examples of systems which are not basic but complex, i.e., which are not shifts but are in $\DSef(\SFT)$ and hence also in $\DSef(\SOF)$? 
Arguably the simplest such example is this:

\begin{example}
\label{exm: proshift which is not shift}
Let $X := 2^\omega$ be Cantor space and $T := \id_X$ the identity function, i.e., the trivial dynamics. 
We claim that $(X,T)$ is in $\DSef(\SFT)$ but it is not in $\SFT$, in fact, $(X,T)$ is not isomorphic to a shift at all.
The latter follows from Hedlund's theorem, since the identity function is not expansive. To show the former, we need to show that $(X,T)$ is a limit of shifts of finite type. Recall that the Cantor space $2^\omega$ is the limit of the inverse sequence $(2^i , \rho_{ij})$, where $2^i$ is the discrete space consisting of all binary strings of length $i$ with $\rho_{ij} : 2^j \to 2^i$ and $\rho_i : 2^\omega \to 2^i$ being the restriction functions (mapping a sequence $x$ to its restriction $x \restriction i$). 
We regard $2^i$ as a dynamical system with the identity function $\id$ as dynamics, so the restriction maps $\rho_{ij}$ and $\rho_i$ trivially are factors. 
Moreover, each $(2^i , \id)$ is isomorphic to a shift of finite type $X_i$: The alphabet is $2_i$ and $X_i$ consists of all the constant sequences (so the forbidden words are those $w = ab$ where $a$ and $b$ are distinct letters in $2^i$). The isomorphism $\overline{\cdot} : (2^i , \id) \to (X_i , \sigma)$ maps a string $a \in 2^i$ to the constant sequence $\overline{a} := (a_i)$ with $a_i := a$ for all $i \in \omega$. 
Hence $(X,T)$ is a limit of the shifts of finite type $X_i$.\footnote{Note, though, that the set of homeomorphisms on Cantor space that are isomorphic to subshifts of finite type is dense in the set of all homeomorphisms on Cantor space with the topology of uniform convergence~\cite{Shimomura1989}.}
\end{example}

Another example comes from odometers. They play an important role in dynamical systems theory (for an overview, see~\cite{Downarowicz2005}) and have connections to cellular automata~\cite{Coven2006}.

\begin{example}
\label{exm: universal odometer is proshift}
We show that every odometer is in $\DSef(\SFT)$ and hence also in $\DSef(\SOF)$.
An \emph{odometer} (aka adding machine or group of $s$-adic integers) is defined, in the most concrete way, as an inverse limit of finite cyclic groups, as follows. Let $s = (s_i)_{i \in \omega}$ be a sequence of positive integers such that $s_i$ divides $s_{i+1}$. Write $C_n$ for the finite cyclic group $\mathbb{Z}/n\mathbb{Z}$. Let $\pi_{i i+1} : C_{s_{i+1}} \to C_{s_i}$ be the function mapping $k$ to $k \mod s_i$ (a surjective group homomorphism).
The $s$-odometer $G_s$ then is the inverse limit of $(C_{s_i} , \pi_{i i+1})_{i \in \omega}$. So an element $x$ of $G_s$ is a sequence $(x_i)_{i \in \omega}$ with $x_i \in C_{s_i}$ and $x_i  \equiv x_{i+1} \mod s_i$. Addition is defined coordinatewise, as addition modulo $s_i$. In particular, we can consider the `$+1$ dynamics' 
$+_1 : G_s \to G_s$
which maps 
$(x_i)_{i \in \omega}$ to $(x_i + 1 \mod s_i)_{i \in \omega}$.
Then $(G_s, +_1)$ is a deterministic dynamical system in our sense. 

In particular, we can define the sequence $(s_i)_{i \in \omega}$ by $s_i := (i+1)!$. The resulting odometer $(G_s , +_1)$ is universal among the odometers (i.e., every other odometer is a factor of it). Moreover, \cite{Hochman2008} shows that the isomorphism class of $(G_s , +_1)$ is generic in the class of transitive and invertible dynamics on compact metric spaces.

To see that each odometer $(G_s, +_1)$ is in $\DSef(\SFT)$, first note that each cyclic group $C_n$, with the $+1$ dynamics $a \mapsto a + 1 \mod n$, can be regarded as a shift of finite type. Indeed, let $n := \{ 0 , \ldots , n - 1 \}$ be the alphabet and let $X_n$ be the shift over $n$ defined by avoiding the words $w = ab$ where $b \not\equiv a+1 \mod n$. Then $X_n$ contains the sequences
\begin{center}
\begin{tabular}{c c c c c c c c c c c c}
$\overline{0}$ & $:=$ & $0$ & $1$ & $2$ & $\ldots$ & $n-2$ & $n-1$ & $0$ & $1$ & $2$ & $\ldots$ \\
$\overline{1}$ & $:=$ & $1$ & $2$ & $3$ & $\ldots$ & $n-1$ & $0$ & $1$ & $2$ & $3$ & $\ldots$ \\
$\vdots$ \\
$\overline{n-1}$ & $:=$ & $n-1$ & $0$ & $1$ & $\ldots$ & $n-3$ & $n-2$ & $n-1$ & $0$ & $1$ & $\ldots$ \\
\end{tabular}
\end{center}
with the discrete topology, and $\sigma ( \overline{k} ) = \overline{k + 1 \mod n}$. So $\overline{\cdot}$ is an isomorphism between $(C_n , +_1)$ and $(X_n, \sigma)$.
Now, since $G_s$ is the limit of the $C_{s_i}$'s, $(G_s, +_1)$ is the limit of the $\omega$-chain of shifts of finite type $X_{s_i}$.
\end{example}

\emph{Properties}.
Regarding the `complexity' of the universal proshift, we can make similar arguments as for the universal nondeterministic system in section~\ref{ssec: the nature of the universal system}. Knowing that the universal proshift must have as factor every sofic shift tells us quite a lot about it.
For example, it does not have any dense orbits and, indeed, it is not topologically transitive.\footnote{Recall that a system $(X,T)$, with $X$ a topological space and $T: X \to X$ continuous, is \emph{topologically transitive}, if for every two nonempty open sets $U,V \subseteq X$, there is an integer $n \geq 0$ such that $f^n(U) \cap V \neq \emptyset$.} 
 
\begin{proposition}
\label{prop: universal proshift not transitive}
The universal proshift is not topologically transitive.
\end{proposition}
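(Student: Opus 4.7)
The strategy is to exhibit an explicit non-transitive shift of finite type and use the fact that the universal proshift must factor onto every such shift. Since topological transitivity is preserved under factors, this will force the universal proshift to be non-transitive.

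Concretely, I would take the shift $Y \subseteq \{0,1\}^\omega$ defined by forbidding the two words $01$ and $10$. Then $Y = \{\overline{0}, \overline{1}\}$ consists of the two constant sequences, both of which are fixed by $\sigma$, and the subspace topology on $Y$ is discrete. So $A := \{\overline{0}\}$ and $B := \{\overline{1}\}$ are nonempty open sets with $\sigma^n(A) \cap B = \emptyset$ for every $n$, witnessing that the shift of finite type $(Y, \sigma)$ is not topologically transitive.

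Next, by Corollary~\ref{cor: universal and homogeneous proshifts}, the universal proshift $(U, T)$ is $\DSef(\SFT)$-universal, so there is a factor $f : (U, T) \to (Y, \sigma)$. Now suppose, toward a contradiction, that $(U,T)$ were topologically transitive. Then for the nonempty open $A, B \subseteq Y$ above, the preimages $f^{-1}(A)$ and $f^{-1}(B)$ are nonempty (by surjectivity of $f$) and open (by continuity), so transitivity would yield some $n$ with $T^n(f^{-1}(A)) \cap f^{-1}(B) \neq \emptyset$. Picking $x$ in this intersection and using equivariance $f \circ T^n = \sigma^n \circ f$ gives $f(x) \in \sigma^n(A) \cap B$, contradicting the choice of $A$ and $B$.

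There is no serious obstacle here: the only thing to check is that continuous equivariant surjections preserve topological transitivity, which is a short routine diagram chase. The underlying point is structural—the universal proshift is required to factor onto every sofic shift, even trivially `decomposable' ones like two isolated fixed points, and any such factorization instantly destroys transitivity upstairs.
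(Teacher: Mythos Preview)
Your proof is correct and essentially the same as the paper's: both exhibit the two-point shift of finite type consisting of the constant sequences $\overline{0},\overline{1}$ (the paper phrases this as $(2,\id)$, which is isomorphic to your $Y$), observe it is not transitive, and conclude via universality plus preservation of transitivity under factors. You spell out the preservation-under-factors argument explicitly, whereas the paper simply invokes it.
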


\begin{proof}
Recall, from example~\ref{exm: proshift which is not shift}, that $(2,\id)$, the two-element system with trivial dynamics, is a shift of finite type. Clearly, it is not transitive. By universality, it is a factor of the universal proshift. Because topological transitivity is preserved along factors, the universal proshift hence also cannot be transitive.
\end{proof}

This is noted, e.g., by~\cite[1128]{Hochman2012} and~\cite[2]{Doucha2025} for the generic system on Cantor space. Similarly to the nondeterministic case, this renders the universal system less interesting from a dynamical perspective. However, from a computational perspective, not having dense orbits actually is welcome: a system that transverses all of its state space does not look like a system that implements a goal-oriented algorithm.

Another important concept in dynamical systems theory is the shadowing property~\cite{Palmer2000}. Intuitively, it says that when we numerically simulate an orbit starting in $x_0$ and computing, at each step $x_n$, the next $x_{n+1}$ with some error from the true $T(x_n)$, then there is a true orbit close to the simulated orbit. Clearly, such a stability is again welcome from a computational perspective. Formally, the property is defined on a compact metric space $X$ with metric $d$ and continuous function $T: X \to X$. (But it can also be defined purely topologically.) A sequence $(x_i)$ in $X$ is a \emph{$\delta$-pseudo-orbit} if $d( x_{i+1} , T(x_i) ) < \delta$ for all $i \in \omega$. A point $x \in X$ \emph{$\epsilon$-shadows} a sequence $(x_i)$ in $X$ if $d( x_{i} , T^i(x) ) < \epsilon$ for all $i \in \omega$. Then $(X,T)$ has the \emph{shadowing property} if, for all $\epsilon > 0$, there is $\delta > 0$ such that every $\delta$-pseudo-orbit is $\epsilon$-shadowed by some point.

A result of \cite{Good2020} is that a system $(X,T)$, where $X$ is a compact totally disconnected space and $T: X \to X$ continuous, has the shadowing property iff $(X,T)$ is the inverse limit of a directed system satisfying the Mittag-Leffler condition and consisting of shifts of finite type. If the bonding maps of the inverse system are surjective, i.e., factors in our sense, then the Mittag-Leffler condition is satisfied. Hence:

\begin{corollary}[of \cite{Good2020}]
\label{cor: profinite shifts have shadowing}
All $\omega$-proshifts of finite type, i.e., the systems in $\DSef(\SFT)$, have the shadowing property. In particular, the universal proshift has the shadowing property.
\end{corollary}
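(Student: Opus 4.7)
The plan is to reduce directly to the characterization of shadowing due to \cite{Good2020} stated just before the corollary. First, let $(X,T)$ be an arbitrary $\omega$-proshift of finite type, i.e., an object of $\DSef(\SFT)$. By definition, $(X,T)$ is a colimit (along ef-pairs) of an $\omega$-chain of shifts of finite type $(X_i,T_i)$. Invoking the limit--colimit coincidence (remark~\ref{rmk: limit vs colimit}), this is the same data as an inverse limit of the shifts of finite type $(X_i,T_i)$ along the factors $f_{i\,i+1}:(X_{i+1},T_{i+1})\to (X_i,T_i)$ extracted from the ef-pairs.

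Next, I would verify the hypotheses of the theorem of \cite{Good2020}. The underlying space $X$ is, by our standing definition of a dynamical system (definition~\ref{def: dynamical system}), a zero-dimensional compact metrizable space, hence compact and totally disconnected. The dynamics $T$ is a (single-valued) continuous function, since $(X,T)$ lies in $\detSysef$. The bonding maps $f_{i\,i+1}$ are factors in our sense, and in particular surjective continuous functions by clause~\ref{itm: nondet equivariance factor} of definition~\ref{def: system morphism}. As the second sentence preceding the corollary records, surjective bonding maps automatically satisfy the Mittag-Leffler condition (each image $f_{ij}[X_j]$ equals the whole of $X_i$, so the descending chain of images stabilizes immediately). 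Thus $(X,T)$ meets all the hypotheses of the cited theorem and therefore has the shadowing property.

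Finally, the universal proshift is, by construction (corollary~\ref{cor: universal and homogeneous proshifts}), an object of $\DSef(\SFT)$, so the preceding paragraph applies verbatim and yields its shadowing property. I expect no genuine obstacle here: the whole argument is a matter of matching our setup to the hypotheses of \cite{Good2020}, and the only mildly nontrivial point is the translation between colimits along ef-pairs and inverse limits along factors, which is precisely the content of remark~\ref{rmk: limit vs colimit}.
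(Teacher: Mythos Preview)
Your proposal is correct and follows exactly the approach the paper takes: the paper does not give a separate proof but derives the corollary directly from the paragraph preceding it, which records the result of \cite{Good2020} and the observation that surjective bonding maps satisfy the Mittag-Leffler condition. You have simply spelled out these steps (together with the translation via remark~\ref{rmk: limit vs colimit}) in more detail.
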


It is also known that a shift has the shadowing property iff it is a shift of finite type~\cite{Walters1978}. So, for example, a sofic shift which is not a shift of finite type does not have the shadowing property. In particular, some systems in $\DSef(\SOF)$ fail to have the shadowing property, despite having a universal system with the shadowing property.

\emph{Future work}. There is much more to be investigated in the categories $\DSef(\SFT)$ and $\DSef(\SOF)$. So we now sketch four avenues for future work.

First, one way to better understand an object is to understand its automorphism group. Thus, picking up again the KPT~correspondence discussed in section~\ref{sec: literature review}, one can relate dynamical properties of the automorphism group of our universal systems to combinatorial properties of their category-theoretically finite objects. This way, one could investigate, e.g., if the automorphism group is extremely amenable (and hence uniquely ergodic) or if it admits ample generics~\cite{Kechris2014}.

Second, the latter question about genericity is particularly interesting (see again~\cite{Kechris2006, Doucha2024, Doucha2025, Hochman2008, Hochman2012, Kwiatkowska2012}). It also connects to questions about computability, as \cite{Hochman2012} shows: There are effective actions of $\mathbb{Z}^1$ on Cantor space with dense conjugacy class. However, for dimensions $d \geq 2$, there are no dense effective actions of $\mathbb{Z}^d$ on Cantor space, while there are non-effective ones.
Other computability-theoretic aspects of symbolic dynamics are studied in~\cite{Galatolo2010}, and in~\cite{Barbieri2025} also inverse limits of shifts of finite type occur.
In this vein, one might also ask for a concrete description of the universal proshift, not just an existence results (cf.~\cite{Akin2008}).

Third, one can connect to the construction of compacta \cite{Mahavier2004, Ingram2012, Ingram2012a} using inverse limits along multifunctions, especially through their study via shift maps~\cite{Kawamura2018}.

Fourth, given the close connection between sofic shifts and automata, it is natural to investigate if there is a similarly fruitful connection between our sofic $\omega$-proshifts and profinite automata~\cite{Rowland2017}. Here duality-theoretic tools might also be useful (see \cite{Gehrke2016} and \cite[ch.~8]{Gehrke2024}): they consider the Boolean algebra of regular languages (i.e., those languages that are recognized by some automaton) over a given alphabet and with certain residuation operations, and they show that dual to this Boolean algebra is the free profinite monoid. For further connections between profinite semigroups and shift spaces, see~\cite{Almeida2025}.

\section{Conclusion}
\label{sec: conclusion}

We end with a brief summary and some direction for future work.

\emph{Summary}.
Motivated by the search for a universal analog computer, we considered possibly nondeterministic systems and identified four senses of universality. We provided an equivalent domain-theoretic description of the systems, and we used the \Fraisse{}-limit method to show the existence of a universal and homogeneous nondeterministic system. The subclass of deterministic systems cannot have such a universal system, but we introduced $\omega$-proshifts and showed that both the $\omega$-proshifts of finite type and the sofic $\omega$-proshifts have a shared universal and homogeneous system, which has the shadowing property.

\emph{Future work}. We already mentioned open questions on $\omega$-proshifts: e.g., exploring their automorphism groups, their genericity, their computability, or their connections to compacta. Particularly exciting is to extend the connection between sofic shifts and automata to the profinite---including links to duality theory.

Further questions include, as mentioned, generalizing our results from discrete time to other monoid actions (in particular, see~\cite{Kechris2014, Doucha2025}). Quite generally, it is worth better understanding the structure of the different categories of systems---and their corresponding categories of domains (also see~\cite{Weigert2023}).

Moreover, it is pertinent to clarifying the connection between our existence result on universal analog computation (which includes `learning systems' like neural networks) to results like the No-Free-Lunch theorems stating the non-existence of a universal learner (see~\cite{Sterkenburg2018} for a study of universal prediction methods).

Finally, at a philosophical level, one should clarify the implication of universality and genericity results on the question---which we had to leave open---whether a dynamical system counts as an analog computer. We have mentioned the density and genericity results about actions on the Cantor space, which are within a longer tradition in ergodic theory on understanding which properties a `typical' dynamical system has (see, e.g., \cite{Glasner2001} or \cite{Hochman2008} for a short history). Beyond deterministic systems and closer to nondeterministic systems, \textcite{Erdoes1963} probabilistically constructed a countable graph that, with probability~1, yields \Fraisse{}'s universal homogenous graph; \cite{Droste2007} provides a related probabilistic construction for universal domains in domain theory; and \cite[sec.~3.1]{Geschke2022} provide a concrete model of the projectively universal profinite graph.
So one may try to make precise the question whether being an analog computer (whatever this means exactly) is a typical property. Does the fact that the generic automorphism on Cantor space is effective have any bearing on this question? What about the fact that this fails for higher dimensions? Note that algorithmic randomness also studies such questions of `typicality'~\cite{Downey2010}.

\appendix

\section{Proofs}
\label{app: proofs}

\subsection{Proofs from section~\ref{sec: categories of dynamical systems}}
\label{app: proofs from section categories of dynamical systems}

\begin{remark}[Zero-dimensionalizing and compactifying]
\label{rmk: zerodimensionalize and compactify}
Starting with a `Polish system' $(X,T)$, i.e., where $X$ is a Polish space and $T:X \to X$ is a continuous function, we want to construct a---in a sense best possible---system $(Y,S)$, with $Y$ a compact and zero-dimensional Polish space and $S : Y \to Y$ continuous, together with an injective and equivariant function $\eta : X \to Y$ that is continuous with respect to a finer Polish topology on $X$.

We use a result from~\textcite{Dahlqvist2016} on how to zero-dimensionalize and compactify Polish spaces. To state this precisely, let us introduce their terminology: 
A \emph{based Polish space} is a pair $(X, \mathcal{B})$ where $X$ is a Polish space and $\mathcal{B}$ is a countable base for the topology of $X$.
A \emph{base-preserving map} $f$ from $(X, \mathcal{B})$ to $(Y, \mathcal{C})$ is a function $f : X \to Y$ such that $f^{-1}(C) \in \mathcal{B}$ for all $C \in \mathcal{C}$ (so $f$ is continuous).
A compact zero-dimensional Polish space $X$ can canonically be regarded as a based Polish space by taking the collection $\Clp(X)$ of all clopen subsets of $X$ as the base.
Next,~\cite{Dahlqvist2016} consider the following categories: 
\begin{itemize}
\item
$\mathsf{Pol}_\mathsf{cz}$: compact zero-dimensional Polish spaces with continuous maps.
\item
$\mathsf{Pol}^\flat_z$: based Polish spaces $(X,\mathcal{B})$, where $X$ is zero-dimensional and $\mathcal{B}$ is a Boolean algebra, with base-preserving maps.
\item
$\mathsf{Pol}^\flat$: based Polish spaces with base-preserving maps.
\end{itemize}
Finally,~\cite{Dahlqvist2016} show that (1)~$\mathsf{Pol}_\mathsf{cz}$ is a reflective subcategory of $\mathsf{Pol}^\flat_z$ with reflector $c$ and (2)~$\mathsf{Pol}^\flat_z$, in turn, is a coreflective subcategory of $\mathsf{Pol}^\flat$ with coreflector $z$. 
For~(1), given a based space $(X, \mathcal{B})$ in $\mathsf{Pol}^\flat_z$, the compactification $c (X, \mathcal{B})$ is the compact zero-dimensional space $Y$ obtained as the limit of the diagram of finite spaces $P$, where $P$ is a finite partition of $X$ with partition cells in $\mathcal{B}$. The unit $\eta_{(X, \mathcal{B})} : (X, \mathcal{B}) \to Y$ is the canonical embedding of $X$ into its compactification $Y$.
For~(2), given a based space $(X, \mathcal{B})$ in $\mathsf{Pol}^\flat$, the zero-dimensionalization $z (X, \mathcal{B})$ is the based space $(X, \overline{\mathcal{B}})$ on the same underlying set $X$ but whose topology is generated by the base $\overline{\mathcal{B}}$, which is the Boolean algebra generated by $\mathcal{B}$. The counit $\epsilon_{(X, \mathcal{B})} : z (X, \mathcal{B}) \to (X, \mathcal{B})$ simply is the identity function on $X$.

Now, we use this result to turn a Polish system $(X,T)$ into a deterministic system in our sense:
\begin{itemize}
\item
\emph{Step~0}. Pick a countable base $\mathcal{B}$ of $X$. For example, if $X = \mathbb{R}^n$, pick the open balls centered on rational vectors with rational radii.
Next close $\mathcal{B}$ under the dynamics, i.e., consider $\mathcal{B}_T := \{ T^{-k} (B) : B \in \mathcal{B} , k \in \omega \}$.
Then $(X, \mathcal{B}_T)$ is a based Polish space. (This step is unnecessary if one thinks that Polish systems should make their choice of dynamically closed base explicit.)

\item
\emph{Step~1}. Take the zero-dimensionalization $z(X, \mathcal{B}_T) = (X, \overline{\mathcal{B}_T})$. Note that, by construction, $T : z(X, \mathcal{B}_T) \to (X, \mathcal{B}_T)$ is base-preserving. So, by the adjunction, it lifts to a dynamics $T : z(X, \mathcal{B}_T) \to z(X, \mathcal{B}_T)$.

\item
\emph{Step~2}. Compactify $z(X,\mathcal{B}_T)$ into $Y := c(z(X,\mathcal{B}_T))$, with the embedding $\eta_X :=  \eta_{z(X, \mathcal{B}_T)} : X \to Y$. Consider the map 
$\eta_X \circ T : X \to Y$, i.e., viewing the result of the dynamics $T$ in the larger, compactified space $Y$. By the adjunction, there is a unique commuting map $c(T)$:
\begin{equation*}
\begin{tikzcd}
z(X,\mathcal{B}_T)
\arrow[d, "T"]
\arrow[r, "\eta_X"]
&
c ( z(X, \mathcal{B}_T) )
\arrow[d, "c(T)"]
\\
z(X,\mathcal{B}_T)
\arrow[r, "\eta_X"]
&
c ( z(X, \mathcal{B}_T) )
\end{tikzcd}
\end{equation*}
So $S:= c(T) : Y \to Y$ extends the dynamics $T$ on $X$.
\end{itemize}
In sum, we have the larger system $(Y,S)$ and an injective function $\eta_X : X \to Y$, which is equivariant by the above diagram and continuous with respect to the finer Polish topology on $X$ generated by $\overline{\mathcal{B}_T}$. The fact that $c$ and $z$ are adjoint functors provides a sense in which this $(Y,S)$ is optimal.\footnote{Similar ideas are used in \cite[sec.~5.3]{Hornischer2021}.}
\end{remark}

\begin{lemma}
\label{lem: comp of tcu multifunctions again tcu}
Let $F : X \rightrightarrows Y$ and $G : Y \rightrightarrows Z$ be closed-valued and upper-hemicontinuous multifunctions between compact Hausdorff spaces. Then $G \circ F : X \rightrightarrows Z$ is again such a function. If both $F$ and $G$ are total, so is $G \circ F$.
\end{lemma}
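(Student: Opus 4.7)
The plan is to use the closed graph theorem invoked in the excerpt: for multifunctions between compact Hausdorff spaces, being closed-valued and upper-hemicontinuous is equivalent to having a closed graph. So it suffices to show that $\Gr(G \circ F)$ is a closed subset of $X \times Z$.

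Observe that
\begin{align*}
\Gr(G \circ F)
=
\big\{ (x,z) \in X \times Z : \exists y \in Y.\, (x,y) \in \Gr(F) \text{ and } (y,z) \in \Gr(G) \big\},
\end{align*}
which is exactly the image, under the projection $\pi_{XZ} : X \times Y \times Z \to X \times Z$, of the set
\begin{align*}
W := \big( \Gr(F) \times Z \big) \cap \big( X \times \Gr(G) \big) \subseteq X \times Y \times Z.
\end{align*}
By hypothesis and the closed graph theorem, $\Gr(F)$ and $\Gr(G)$ are closed, so $W$ is closed qua intersection of closed sets. Since $Y$ is compact Hausdorff, the projection $\pi_{XZ}$ is a closed map (a standard consequence of the tube lemma). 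Hence $\Gr(G \circ F) = \pi_{XZ}[W]$ is closed, as required. Applying the closed graph theorem in the other direction, $G \circ F$ is closed-valued and upper-hemicontinuous.

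Finally, totality is immediate: given $x \in X$, totality of $F$ yields some $y \in F(x)$, and totality of $G$ yields some $z \in G(y)$, so $z \in (G \circ F)(x)$. The only nontrivial step is the closedness of the graph, and the main thing to check carefully there is that the tube-lemma argument really gives closedness of $\pi_{XZ}$ — but this is a standard fact about projections out of a compact Hausdorff factor, so no genuine obstacle is expected.
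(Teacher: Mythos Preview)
Your proof is correct. It differs from the paper's approach: the paper handles the two properties separately, obtaining closed-valuedness from the corollary to the closed graph theorem (the image $G[F(x)]$ of the closed set $F(x)$ is closed) and obtaining upper-hemicontinuity by citing the general fact \cite[thm.~17.23]{Aliprantis2006} that upper-hemicontinuity is preserved under composition. You instead handle both properties at once via the closed graph characterization, expressing $\Gr(G\circ F)$ as the projection along the compact factor $Y$ of the closed set $(\Gr(F)\times Z)\cap(X\times\Gr(G))$. Your route has the advantage of being entirely self-contained within the tools already stated in the paper (closed graph theorem plus the tube lemma), whereas the paper's route is shorter but relies on an external reference for the preservation of upper-hemicontinuity. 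Both arguments use compactness in an essential way, so neither generalizes more readily than the other.
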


\begin{proof}
Closed-valued: For $x \in X$, $\varphi(x)$ is closed, so the image $\psi[\varphi(x)]$ is closed by the corollary to the closed graph theorem.
Upper-hemicontinuous: The composition of upper-hemicontinuous multifunctions is again upper-hemicontinuous~\cite[thm.~17.23]{Aliprantis2006}.
If $F$ and $G$ are total, then, for $x \in X$, $\varphi (x)$ is nonempty, so $\psi \circ \varphi (x) = \psi [ \varphi (x) ] \neq \emptyset$. 
\end{proof}

\begin{proof}[Proof of proposition~\ref{prop: composition of system morphisms}]
Assume 
$\varphi : (X,T) \to (Y,S)$ 
and
$\psi : (Y,S) \to (Z,R)$
are morphisms. 
By lemma~\ref{lem: comp of tcu multifunctions again tcu}, $\psi \circ \varphi$ is again closed-valued and upper-hemicontinuous. So it remains to check the equivariance conditions.

Concerning~\ref{itm: nondet equivariance hom forth}, if $x \xrightarrow{T} x'$, there is $y,y' \in Y$ with $x \xrightarrow{\varphi} y$, $x' \xrightarrow{\varphi} y'$, and $y \xrightarrow{S} y'$, so there is $z,z' \in  Z$ with $y \xrightarrow{\psi} z$, $y' \xrightarrow{\psi} z'$, and $z \xrightarrow{R} z'$, hence also $x \xrightarrow{\psi \circ \varphi} z$, $x' \xrightarrow{\psi \circ \varphi} z'$, and $z \xrightarrow{R} z'$, as needed.

Concerning~\ref{itm: nondet equivariance hom back}, if $z \xrightarrow{R} z'$, there is $y,y' \in Y$ with $y \xrightarrow{\psi} z$, $y' \xrightarrow{\psi} z'$, and $y \xrightarrow{S} y'$, so there is $x,x' \in  X$ with $x \xrightarrow{\varphi} y$, $x' \xrightarrow{\varphi} y'$, and $x \xrightarrow{T} x'$, hence also $x \xrightarrow{\psi \circ \varphi} z$, $x' \xrightarrow{\psi \circ \varphi} z'$, and $x \xrightarrow{T} x'$, as needed.

Now assume that $\varphi$ and $\psi$ are factors. By the above, $\psi \circ \varphi$ is a morphism, and, qua composition of continuous surjections, it again is a continuous surjection, so it remains to show~\ref{itm: nondet equivariance factor}. 
So assume 
$x \xrightarrow{\psi \circ \varphi} z$ and $x \xrightarrow{T} x'$. So there is $y$ with $x \xrightarrow{\varphi} y$ and $y \xrightarrow{\psi} z$. Since $\varphi$ satisfies~\ref{itm: nondet equivariance factor}, there is $y'$ with 
$y \xrightarrow{S} y'$ and $x' \xrightarrow{\varphi} y'$. Since $\psi$ satisfies~\ref{itm: nondet equivariance factor}, there is $z'$ with 
$z \xrightarrow{R} z'$ and $y' \xrightarrow{\psi} z'$. Hence also $x' \xrightarrow{\psi \circ \varphi} z'$, as needed.

Now assume that $\varphi$ and $\psi$ are embeddings. By the above, $\psi \circ \varphi$ is a morphism and total, hence it remains to show that it is partition-injective and satisfies~\ref{itm: nondet equivariance embedding}.
For the former, we need to show that $\{ \psi \circ \varphi (x) : x \in X \}$ partitions $Z$. Indeed,
if $x \neq x'$, then it is readily checked that $\psi \circ \varphi (x) \cap \psi \circ \varphi (x') = \emptyset$; and if $z \in Z$, then there is $y \in Y$ such that $z \in \psi(y)$, and hence there is $x \in X$ such that $y \in \varphi(x)$, so $z \in \varphi \circ \psi(x)$. 
For~\ref{itm: nondet equivariance embedding}, assume 
$x \xrightarrow{\psi \circ \varphi} z$ and $z \xrightarrow{R} z'$. So there is $y$ with $x \xrightarrow{\varphi} y$ and $y \xrightarrow{\psi} z$. Since $\psi$ satisfies~\ref{itm: nondet equivariance embedding}, there is $y'$ with 
$y \xrightarrow{S} y'$ and $y' \xrightarrow{\psi} z'$. Since $\varphi$ satisfies~\ref{itm: nondet equivariance embedding}, there is $x'$ with 
$x \xrightarrow{T} x'$ and $x' \xrightarrow{\varphi} y'$. Hence also $x' \xrightarrow{\psi \circ \varphi} z'$, as needed.

Finally, it is well-known that relation composition, which is given by $\circ$, is associative with unit the identity function. 
\end{proof}

\begin{proof}[Proof of proposition~\ref{prop: ef pairs determine each other}]
Ad~\ref{prop: ef pairs determine each other 1}.
For uniqueness, if $\underline{f}$ and $\underline{f}'$ are two embeddings $(X,T) \to (Y,S)$ such that $(\underline{f},f)$ and $(\underline{f}',f)$ are ef-pairs, then, for $x \in X$, we have
$\underline{f} (x)  
=  
\underline{f} \big[ f \circ \underline{f}'  (x) \big] 
= 
\underline{f} \circ f [ \underline{f}'  (x) ] 
= 
\underline{f}' (x)$.

For existence, we show that the multifunction $e : X \rightrightarrows Y$ given by $e(x) := f^{-1} (x)$ is such that $(e,f)$ is an ef-pair. 

First, we show that $e$ is an embedding.
Since $f$ is surjective, each $e(x)$ is nonempty, so $e$ is total, and since the graph of $f$ is closed, so is the graph of $e = f^{-1}$.
By construction, $\{ e(x) : x \in X \}$ partitions $Y$.
Regarding equivarience~\ref{itm: nondet equivariance hom forth}, assume $x \xrightarrow{T} x'$. Since $f$ satisfies~\ref{itm: nondet equivariance hom back}, there is $y,y' \in Y$ such that $y \xrightarrow{f} x$,  $y' \xrightarrow{f} x'$, and $y \xrightarrow{S} y'$, hence also $x \xrightarrow{e} y$,  $x' \xrightarrow{e} y'$, as needed.
Regarding~\ref{itm: nondet equivariance embedding}, if $x \xrightarrow{e} y$ and $y \xrightarrow{S} y'$, then $y \xrightarrow{f} x$, so, since $f$ satisfies~\ref{itm: nondet equivariance factor}, there is $x'$ with $x \xrightarrow{T} x'$ and $y' \xrightarrow{f} x'$, so also $x' \xrightarrow{e} y'$, as needed.  

Second, we show properties~\ref{def: embedding-factor pair 1 embed then factor is identity} and~\ref{def: embedding-factor pair 2 factor then embed is subset}:
For $x \in X$, since $f$ is surjective, $f \circ e (x) = f ( f^{-1} (x) ) =  \{ x \}$. 
For $y \in Y$, trivially $y \in f^{-1} ( f (y) ) = e \circ f (y)$.

Ad~\ref{prop: ef pairs determine each other 2}.
For uniqueness, if $\overline{e}$ and $\overline{e}'$ are two factors $(Y,S) \to (X,T)$ such that $(e, \overline{e})$ and $(e, \overline{e}')$ are ef-pairs, then, for $y \in Y$, we have
$y \in e \circ \overline{e} (y)$
and
$y \in e \circ \overline{e}' (y)$,
so there is $x \in \overline{e} (y)$ and $x' \in \overline{e}' (y)$ with 
$y \in e (x)$ and $y \in e(x')$. Since $e$ is partition-injective, $x = x'$. So, since $\overline{e}$ and $\overline{e}'$ are functions, 
$\overline{e} (y) = x = x' = \overline{e}' (y)$.

For existence, we show that the (multi)function $f : Y \rightrightarrows X$, given by $f(y) := $~the $x$ with $y \in e(x)$, is such that $(e,f)$ is an ef-pair.
Note that, since $e$ is partition-injective, $f$ is indeed a function. 
Also note that, for any $A \subseteq X$, we have
$f^{-1} (A) = e [ A ]$:
for $y \in Y$, we have $y \in f^{-1} (A)$ iff $f(y) \in A$ iff the $x$ with $y \in e(x)$ is in $A$ iff there is $x \in A$ with $y \in e(x)$ iff $y \in e[A]$.

First, we show that $f$ is a factor.
To show that $f$ is surjective, let $x \in X$. Since $e$ is total, pick $y \in e(x)$. Then $f(y) = x$.
Also, $f$ is continuous: if $A \subseteq X$ is closed, then $f^{-1} (A) = e [ A ]$ is closed, since, by the corollary to the closed graph theorem, the image of a closed set is closed.
Regarding equivarience~\ref{itm: nondet equivariance hom back}, if $x \xrightarrow{T} x'$, then, since $e$ satisfies~\ref{itm: nondet equivariance hom forth}, there is $y,y' \in Y$ with $x \xrightarrow{e} y$, $x' \xrightarrow{e} y'$, and $y \xrightarrow{S} y'$, so also $y \xrightarrow{f} x$ and $y' \xrightarrow{f} x'$, as needed.
For~\ref{itm: nondet equivariance factor}, if $y \xrightarrow{f} x$ and $y \xrightarrow{S} y'$, then $x \xrightarrow{e} y$, so, since $e$ is an embedding, there is $x'$ with $x \xrightarrow{T} x'$ and $x' \xrightarrow{e} y'$, so also $y' \xrightarrow{f} x'$, as needed.

Second, we show properties~\ref{def: embedding-factor pair 1 embed then factor is identity} and~\ref{def: embedding-factor pair 2 factor then embed is subset}:
For $x \in X$, since $f$ is surjective, 
$f \circ e (x) = f [ f^{-1} (x) ] =  \{ x \}$.
For $y \in Y$, we have, by definition, that $f(y)$ is the $x \in X$ with $y \in e(x)$, so $y \in e(x) = e \circ f (y)$.
\end{proof}

\begin{proposition}
\label{prop: isomorphism in Sysef}
Let $(e,f) : (X,T) \to (Y,S)$ be an ef-pair. The following are equivalent.
\begin{enumerate}
\item
\label{prop: isomorphism in Sysef 1}
$(e,f) : (X,T) \to (Y,S)$ is an isomorphism in $\allSysef$
\item
\label{prop: isomorphism in Sysef 2}
$e : X \to Y$ is a homeomorphism with inverse $f : Y \to X$ such that $e \circ T = S \circ e$ (and hence also $f \circ S = T \circ f$). 
\end{enumerate}
\end{proposition}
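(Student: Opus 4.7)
The plan is to prove the two implications separately, in both cases leveraging Proposition~\ref{prop: ef pairs determine each other} to pass between the embedding $e$ (a priori a multifunction) and the factor $f$ (a function).

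For the implication $(\ref{prop: isomorphism in Sysef 2}) \Rightarrow (\ref{prop: isomorphism in Sysef 1})$, I would take $e$ and $f$ as in~(\ref{prop: isomorphism in Sysef 2}), regard $e$ as the singleton-valued multifunction $x \mapsto \{e(x)\}$, and verify directly that $(e,f)$ is an ef-pair: total\-i\-ty and partition-injectivity of $e$ follow because $e$ is a bijection; the embedding equivariance conditions~\ref{itm: nondet equivariance hom forth} and~\ref{itm: nondet equivariance embedding} unpack to $x \xrightarrow{T} x' \Leftrightarrow e(x) \xrightarrow{S} e(x')$, which is exactly the assumed equation $e \circ T = S \circ e$; the factor conditions for $f$ follow by applying $f$ to both sides of that equation; and the ef-pair identities $f \circ e = \id_X$, $e \circ f = \id_Y$ hold because $e,f$ are mutually inverse. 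The candidate inverse morphism is $(f,e) : (Y,S) \to (X,T)$, where now $f$ is read as the embedding and $e$ as the factor; this is again an ef-pair by the same verification with the roles reversed (using $f \circ S = T \circ f$). Composing in both orders yields $(f \circ e, f \circ e) = (\id_X,\id_X)$ and $(e \circ f, e \circ f) = (\id_Y,\id_Y)$, so $(e,f)$ is an isomorphism.

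For the converse $(\ref{prop: isomorphism in Sysef 1}) \Rightarrow (\ref{prop: isomorphism in Sysef 2})$, I would suppose $(e',f') : (Y,S) \to (X,T)$ is inverse to $(e,f)$. By the definition of composition of ef-pairs, the identities $(e',f') \circ (e,f) = (\id_X,\id_X)$ and $(e,f) \circ (e',f') = (\id_Y,\id_Y)$ give, among other things, $f \circ f' = \id_X$ and $f' \circ f = \id_Y$. Thus $f : Y \to X$ is a continuous bijection between compact Hausdorff spaces with continuous inverse $f'$, i.e., a homeomorphism. By Proposition~\ref{prop: ef pairs determine each other}(\ref{prop ef pairs determine each other 1}), $e(x) = f^{-1}(x) = \{f'(x)\}$, so $e$ is (identifiable with) the function $f^{-1}$. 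Finally, the two embedding equivariance conditions for $e$, when $e$ is a function, amount exactly to the biconditional $x \xrightarrow{T} x' \Leftrightarrow e(x) \xrightarrow{S} e(x')$; since $e$ is bijective this is precisely $e \circ T = S \circ e$ as multifunctions, and composing with $f$ on both sides gives $f \circ S = T \circ f$.

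The main obstacle is not any single deep step but rather careful bookkeeping: one must consistently read $e$ both as a multifunction (to apply the ef-pair formalism and Proposition~\ref{prop: ef pairs determine each other}) and as an ordinary function (to get a homeomorphism in the classical sense), and unwind the composition rule $(e',f') \circ (e,f) = (e' \circ e, f \circ f')$ correctly so that the identity ef-pair $(\id_X,\id_X)$ forces $f$, not $e$, to be the component that is inverted on the nose.
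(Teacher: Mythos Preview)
Your proposal is correct. The direction $(\ref{prop: isomorphism in Sysef 2})\Rightarrow(\ref{prop: isomorphism in Sysef 1})$ matches the paper's argument essentially verbatim: verify that $(f,e)$ is again an ef-pair and compose in both orders.

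For $(\ref{prop: isomorphism in Sysef 1})\Rightarrow(\ref{prop: isomorphism in Sysef 2})$ you take a slightly different, and arguably cleaner, path than the paper. The paper works on the \emph{embedding} side first: it argues directly that $e$ must be single-valued, because if $y\neq y'$ both lay in $e(x)$ then $e'(y)=\{x\}=e'(y')$ would violate partition-injectivity of the inverse embedding $e'$; only afterwards does it identify $e^{-1}$ with $f$. You instead work on the \emph{factor} side: since factors are already honest functions, the identities $f\circ f'=\id_X$ and $f'\circ f=\id_Y$ coming from ef-pair composition immediately make $f$ a homeomorphism, and then Proposition~\ref{prop: ef pairs determine each other} forces $e=f^{-1}$ to be single-valued. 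Both arguments then finish the same way, reading $e\circ T=S\circ e$ off the embedding equivariance conditions once $e$ is known to be a bijective function. Your route saves the ad hoc partition-injectivity step at the cost of invoking Proposition~\ref{prop: ef pairs determine each other}; the paper's route is self-contained but slightly longer.
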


\begin{proof}
(\ref{prop: isomorphism in Sysef 1})$\Rightarrow$(\ref{prop: isomorphism in Sysef 2}). 
Assume $(e,f) : (X,T) \to (Y,S)$ is an isomorphism, with inverse $(e',f') : (Y,s) \to (X,T)$.
We show that $e$ is a continuous bijective function $X \to Y$ such that $e \circ T = S \circ e$ and $e^{-1} = f$. (Recall that bijective continuous functions on compact Hausdorff spaces are homeomorphisms; and if $e \circ T = S \circ e$, then by `multiplying' $e^{-1}$ to the left and to the right, $e^{-1} e T e^{-1} = e^{-1} S  e e^{-1}$, so $T e^{-1} = e^{-1} S$.)

First, $e$ is a function, because if $y \neq y'$ were in $e(x)$, then both $e'(y)$ and $e'(y')$ are, by totality, nonempty subsets of $e' \circ e(x) = \{ x \}$, so $e'(y) = \{ x \}  = e'(y')$, contradicting that $e'$ is partition-injective.
And $e$ is bijective: if $x \neq x'$, then $e(x) \cap e(x') = \emptyset$, and if $y \in Y$, then it is in some $e(x)$, so $e(x) = y$.
Further, as a upper-hemicontinuous multifunction, $e$ is continuous.  
And $e^{-1} = f$ because, given $y \in Y$, we need to show that $x := f(y)$ is such that $e(x) = y$; indeed, since $f$ is the factor corresponding to $e$, we have $y \in e (f(y))$, and since $e$ is a function, we actually have $y = e(f(y)) = e(x)$.

Finally, we check, for $x \in X$, that $e \circ T (x) = S \circ e (x)$.
($\subseteq$)
If $y' \in e \circ T (x)$, there is $x' \in X$ with $x \xrightarrow{T} x'$ and $x' \xrightarrow{e} y'$, so, since $e$ is a morphism, there is $y_0,y_1 \in Y$ such that $x \xrightarrow{e} y_0$, $x' \xrightarrow{e} y_1$, and $y_0 \xrightarrow{S} y_1$, so, since $e$ is a function, $y' = y_1 \in S \circ e (x)$.
($\supseteq$)
If $y' \in S \circ e (x)$, there is $y \in Y$ with $x \xrightarrow{e} y$ and $y \xrightarrow{S} y'$, so, since $e$ is an embedding, there is $x' \in X$ such that $x \xrightarrow{T} x'$ and $x' \xrightarrow{e} y'$, so $y' \in e \circ T (x)$.

(\ref{prop: isomorphism in Sysef 2})$\Rightarrow$(\ref{prop: isomorphism in Sysef 1}).
Assume $e : X \to Y$ is a homeomorphism with inverse $f$ and $e \circ T = S \circ e$ (so, as noted, also $T \circ f = f \circ S$). We show that $(e,f) : (X,T) \to (Y,S)$ is an isomorphism in $\allSysef$.
It suffices to show that both $(e,f)$ and $(f,e)$ are ef-pairs: Then 
$(f,e) \circ (e,f) = (f \circ e, f \circ e ) = (\id_X , \id_X) = \id_{(X,T)}$
and similarly
$(e,f) \circ (f,e) = \id_{(Y,S)}$. 

Indeed, qua continuous functions, both $e$ and $f$ are closed-valued and upper-hemicontinuous multifunctions. 
We show that $e$ is both an embedding and a factor: Qua bijective continuous function, it is both a surjective function and a total partition-injective multifunction.
It suffices to show equivariance~\ref{itm: nondet equivariance factor} and~\ref{itm: nondet equivariance embedding} (which implies~\ref{itm: nondet equivariance hom forth} and~\ref{itm: nondet equivariance hom back}).
For~\ref{itm: nondet equivariance factor}, if $x \xrightarrow{e} y$ and $x \xrightarrow{T} x'$, then $y' := e(x') \in e \circ T (x) = S \circ e(x)$, so there is $y_0$ such that $x \xrightarrow{e} y_0$ and $y_0 \xrightarrow{S} y'$, hence, since $e$ is a function $y_0 = y$, as needed.
For~\ref{itm: nondet equivariance embedding}, if $x \xrightarrow{e} y$ and $y \xrightarrow{S} y'$, then $y' \in S \circ e(x) = e \circ T (x)$, so there is $x'$ with $x \xrightarrow{T} x'$ and $x' \xrightarrow{e} y'$, as needed.
Similarly, we show that $f$ is both an embedding and a factor (now using $T \circ f = f \circ S$).
Moreover, $f \circ e = \id_X$ and $e \circ f = \id_Y$, so both $(e,f)$ and $(f,e)$ are ef-pairs.
\end{proof}

\subsection{Proofs from section~\ref{sec: coalgebra and domain theory}}
\label{app: coalgebra and domain theory}

Here we prove theorem~\ref{thm: Sef equivalent to domain theoretic category}. 
We call a poset $P$ \emph{co-atomic} if each element is the greatest lower bound of its co-atoms, i.e., for all $x \in A$, we have $x = \bigwedge \CoAt(x)$.\footnote{Note that a function between co-atomic partial orders that preserves arbitrary infima is co-atomic, so co-atomicity can be seen as a weakening of infima preservation (which, for monotone functions on complete lattices, is equivalent to having a lower adjoint).}

The next two propositions are fairly standard (cf.\ e.g.~\cite{Edalat1995}), but, for completeness, we still add proofs.

\begin{proposition}
\label{prop: closed subsets algebraic lattice}
Let $X$ be a compact and zero-dimensional Polish space. Then $\F(X)$, i.e., the set of closed subsets of $X$ ordered by reverse inclusion, is an algebraic lattice. Its compact elements (in the order-theoretic sense) are the clopen subsets of $X$. Moreover, $\F(X)$ is co-atomic, with co-atoms being the singletons $\{ x \}$.
\end{proposition}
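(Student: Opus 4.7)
The plan is to first pin down the lattice structure, then identify the compact elements, deduce algebraicity, and finally handle the co-atoms. Since $A \leq B$ in $\F(X)$ means $A \supseteq B$, the least upper bound of a family $\{A_i\}$ is $\bigcap_i A_i$ (this is closed, hence in $\F(X)$) and the greatest lower bound is the closure $\overline{\bigcup_i A_i}$. In particular $\F(X)$ is a complete lattice with bottom $X$ and top $\emptyset$, so it is a lattice in the sense required.

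Next I would show that the compact elements are exactly the clopen subsets of $X$. For the easy direction, let $c \subseteq X$ be clopen and let $D \subseteq \F(X)$ be directed (equivalently, $D$ is downward directed under $\subseteq$) with $\bigvee D = \bigcap D \subseteq c$. Then $\{ d \cap (X \setminus c) : d \in D \}$ is a family of closed subsets of the compact space $X$ with empty intersection, so some finite subfamily already has empty intersection; by downward directedness some single $d \in D$ satisfies $d \subseteq c$, i.e.\ $d \geq c$. For the converse, suppose $c \in \F(X)$ is order-theoretically compact. Since $X$ is zero-dimensional Hausdorff, $c = \bigcap \mathcal{U}$ where $\mathcal{U}$ is the family of clopens containing $c$, and $\mathcal{U}$ is downward directed under $\subseteq$ (clopens are closed under finite intersection). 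Applying compactness of $c$ to $\mathcal{U}$ yields some $U \in \mathcal{U}$ with $U \subseteq c$, and combined with $c \subseteq U$ this gives $c = U$ clopen.

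Algebraicity then follows quickly: for any $A \in \F(X)$, the set $\{U \in K(\F(X)) : U \geq A\}$ is the family of clopens containing $A$, which is directed by intersection and whose join (intersection) equals $A$ by zero-dimensionality. The only step that requires a little care is the direction ``compact implies clopen'', since it ties together compactness of $X$ and zero-dimensionality in the same argument, but the above plan handles this cleanly.

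For co-atomicity, note that the top of $\F(X)$ is $\emptyset$, so an element $c$ is a co-atom iff $c \neq \emptyset$ and the only closed subsets of $c$ are $c$ and $\emptyset$. Since $X$ is Hausdorff, every singleton is closed, so a co-atom can contain at most one point, and conversely each singleton $\{x\}$ is clearly a co-atom. Hence $\CoAt(A) = \{ \{x\} : x \in A \}$, and therefore
\begin{equation*}
\bigwedge \CoAt(A) \;=\; \overline{\bigcup_{x \in A} \{x\}} \;=\; \overline{A} \;=\; A,
\end{equation*}
so $\F(X)$ is co-atomic, completing the proof.
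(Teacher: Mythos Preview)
Your argument is correct and follows essentially the same route as the paper's proof: complete lattice via intersections, identification of compacts with clopens using topological compactness of $X$ together with zero-dimensionality, algebraicity from ``every closed set is the intersection of its clopen neighbourhoods'', and co-atomicity via singletons. The only slip is notational: in the algebraicity step you write $\{U \in K(\F(X)) : U \geq A\}$, but under reverse inclusion $U \geq A$ means $U \subseteq A$; the set you actually use (and correctly describe in words as ``the family of clopens containing $A$'') is $\{U \in K(\F(X)) : U \leq A\}$.
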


\begin{proof}
Least upper bounds in $\F(X)$ are given by intersections, so $\F(X)$ is a complete lattice. (The greatest lower bounds hence are closures of unions.) 
The greatest element is $\emptyset$, hence, since singletons are closed, each $\{x\} \in \F(X)$ is a co-atom, and every co-atom is a issingleton. If $A \in \F(X)$, then $\CoAt(A) := \big\{ \{a\} : a \in A \big\}$, so $A = \bigcup \CoAt(A)$, and since $A$ is already closed, it is the greatest lower bound of $\CoAt(A)$.

It remains to show that $\F(X)$ is algebraic, with the set $K(\F(X))$ of compact elements being the set $\Clp(X)$ of clopen subsets of $X$. We start by showing, for $A \in \F(X)$, that $\mathcal{F} := \{ B \in \Clp(X) : B \leq A \}$ is directed with $\bigcap \mathcal{F} = A$.

Indeed, $\mathcal{F}$ is nonempty: If $A = X$, then it is clopen, so $A \in \mathcal{F}$. If $A \neq X$, let $x \in X$ with $x \not\in A$. Since $X$ is normal and $A$ closed, there are disjoint opens $U$ and $V$ with $x \in U$ and $A \subseteq V$. Since the clopens form a base, there in particular is a clopen $B$ with $x \in B \subseteq U$ and $A \subseteq V \subseteq U^c \subseteq B^c$. So $B^c \in \mathcal{F}$.
Moreover, $\mathcal{F}$ is closed under intersection, i.e., least upper bounds, hence directed.
Finally, by construction, $A \subseteq \bigcap \mathcal{F}$. For the other inclusion, if $x \in X \setminus A$, there is, as above, a clopen $B$ such that $x \in B$ and $A \subseteq B^c$, so $B^c \in \mathcal{F}$, hence $x \not\in \bigcap \mathcal{F}$.

Hence it remains to show $K(\F(X)) = \Clp(X)$.
If $A \in K(\F(X))$, then $A$ is clopen: Indeed, we already know that $\mathcal{F} := \{ B \in \Clp(X) : B \leq A \}$ is directed with $\bigcap \mathcal{F} \subseteq A$, so there is $B \in \mathcal{F}$ with $B \subseteq A$, so $B \in \Clp(X)$ with $B \supseteq A$ and $B \subseteq A$, i.e., $A = B$ is clopen.
For the other direction, assume $A$ is clopen and show that $A$ is order-theoretically compact. So let $\mathcal{D} \subseteq \F(X)$ be directed with $\bigcap \mathcal{D}  \subseteq A$. We need to find $B \in \mathcal{D}$ with $B \subseteq A$.
Note that $\{ B^c : B \in \mathcal{D} \}$ is an open cover of $A^c$: if $x \not\in A$, then $x \not\in \bigcap \mathcal{D}$, so $x \not\in B$ for some $B \in \mathcal{D}$. Now, since $A^c$ is a closed subset of a compact space, there is a finite subcover, say, $\{ B_1^c , \ldots , B_n^c \}$. By directedness, let $B \in \mathcal{D}$ with $B \subseteq B_k$ for $k = 1 , \ldots , n$. Then $B \subseteq A$: If $x \not\in A$, then $x \in B_k^c$ for some $k$, so $x \not\in B$.
\end{proof}

\begin{proposition}
\label{prop: up hem cont and closed val multifunction lift to scott cont}
If $T : X \rightrightarrows Y$ is an upper-hemicontinuous and closed-valued multifunction between compact and zero-dimensional Polish spaces, then $\F(T) : \F(X) \to \F(Y)$ mapping $A$ to $T[A]$ is a Scott-continuous and co-atomic function.
\end{proposition}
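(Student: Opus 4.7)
The map $\F(T)$ is well-defined because, by the corollary to the closed graph theorem quoted in the discussion after Definition~\ref{def: dynamical system}, the image $T[A]$ of any closed set $A \subseteq X$ is again closed in $Y$. So the real content is to verify (i) Scott-continuity and (ii) co-atomicity, using that on $\F(X)$ and $\F(Y)$ (ordered by reverse inclusion, so directed suprema are intersections of decreasing families and infima are closures of unions) the co-atoms are the singletons, as established in Proposition~\ref{prop: closed subsets algebraic lattice}.

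For Scott-continuity, I unpack what it means: given a directed $\mathcal{D} \subseteq \F(X)$, which in reverse inclusion means a family of closed sets that is downward directed under $\subseteq$, I must show
\[
T\Big[\bigcap \mathcal{D}\Big] \;=\; \bigcap_{A \in \mathcal{D}} T[A].
\]
The inclusion $\subseteq$ is immediate. For $\supseteq$, fix $y \in \bigcap_{A} T[A]$ and set $K_A := A \cap \{x \in X : y \in T(x)\}$. Each $K_A$ is closed, since $A$ is closed and $\{x : y \in T(x)\}$ is the $y$-slice of the closed graph $\Gr(T)$. By hypothesis each $K_A$ is nonempty, and the family $\{K_A\}_{A \in \mathcal{D}}$ has the finite intersection property: any $K_{A_1}, \dots, K_{A_n}$ are contained in a common $K_A$ by directedness ($\mathcal{D}$ contains some $A \subseteq A_1 \cap \cdots \cap A_n$). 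By compactness of $X$, there is $x \in \bigcap_A K_A$, so $x \in \bigcap \mathcal{D}$ and $y \in T(x) \subseteq T[\bigcap \mathcal{D}]$.

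For co-atomicity, I use that $\CoAt(A) = \{\{x\} : x \in A\}$ by Proposition~\ref{prop: closed subsets algebraic lattice}. Hence
\[
\bigwedge \F(T)\big[\CoAt(A)\big] \;=\; \bigwedge_{x \in A} T(\{x\}) \;=\; \overline{\bigcup_{x \in A} T(x)} \;=\; \overline{T[A]} \;=\; T[A] \;=\; \F(T)(A),
\]
where the last equality before the conclusion uses that $T[A]$ is already closed by the well-definedness step.

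The only step that requires any real argument is the nontrivial inclusion in Scott-continuity, and the hard part there is producing a point in $\bigcap \mathcal{D}$ that witnesses $y$; once the closed graph theorem is invoked to make the $y$-slice closed, the standard compactness / finite intersection property argument, enabled by downward directedness of $\mathcal{D}$, closes the gap. No additional hypotheses beyond what is already in the statement are needed.
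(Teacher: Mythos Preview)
Your proof is correct and follows essentially the same route as the paper: well-definedness via the closed graph theorem corollary, co-atomicity from the identification of co-atoms with singletons plus $T[A]$ already being closed, and Scott-continuity via the compactness/finite-intersection argument applied to the sets $K_A = A \cap \{x : y \in T(x)\}$ (the paper's $C_A$). One tiny wording slip: when $A \subseteq A_i$ you get $K_A \subseteq K_{A_i}$, so it is $K_A$ that is contained in each $K_{A_i}$ (not the reverse), but this still gives the finite intersection property since $K_A \neq \emptyset$.
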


\begin{proof}
Note that $\F(T)$ is well-defined: by the corollary to the closed graph theorem, $T[A]$ is a closed subset of $Y$.
It is monotone: if $A \supseteq B$, then $T[A] \supseteq T[B]$. And $\F(T)$ is co-atomic: For $A \in \F(X)$, by definition
$\F(T) (A) = T[A] = \bigcup \{ T(x) : x \in A \}$, and, since $\CoAt(A) = \big\{ \{ x \} : x \in A \big\}$, this is further identical to $\bigcup \F(T) [ \CoAt(A) ] $. So, since $\F(T) (A)$ already is closed, we have $\F(T)(A) = \bigwedge \F(T) [ \CoAt(A) ]$.

To show Scott-continuity, let $\mathcal{F} \subseteq \F(X)$ be directed and show
\begin{align*}
T \big[ \bigcap \mathcal{F} \big]
=
\bigcap_{A \in \mathcal{F}} T[A].
\end{align*}

($\subseteq$) 
For $A \in \mathcal{F}$, we have $\bigcap \mathcal{F} \subseteq A$, so, by monotonicity, 
$T \big[ \bigcap \mathcal{F} \big] \subseteq T[A]$.
Hence 
$T \big[ \bigcap \mathcal{F} \big] \subseteq \bigcap_{A \in \mathcal{F}} T[A]$.

($\supseteq$)  
Assume $y \in \bigcap_{A \in \mathcal{F}} T[A]$.
Then, for each $A \in \mathcal{F}$, there is $x_A \in A$ with $y \in T(x_A)$. 
So the set
$C_A := \{ x \in X : x \in A \text{ and } y \in T(x) \}$
is nonempty.
It also is closed: it is the intersection of $A$ and $\{ x \in X : y \in T(x) \}$, and the latter also is closed, since the graph of $T$ is closed.
Moreover, if $A \supseteq B$, then $C_A \supseteq C_B$. Hence $\{ C_A : A \in \mathcal{F} \}$ is a collection of closed subsets of $X$ with the finite intersection property.
By compactness of $X$, the intersection of the $C_A$ is nonempty, so let $x \in \bigcap_{A \in \mathcal{F}} C_A$. Then $x \in \bigcap \mathcal{F}$ and $y \in T(x)$ (note that there is some $A \in \mathcal{F}$), so $y \in T \big[ \bigcap \mathcal{F} \big]$, as needed.
\end{proof}

Write $\czPolm$ for the category of compact and zero-dimensional Polish spaces together with upper-hemicontinuous and closed-valued multifunctions.
Recall that $\bAlgc$ is the category of algebraic lattices whose compact elements form a countable sublattice that is a Boolean algebra together with Scott-continuous and co-atomic functions.

\begin{proposition}
\label{prop: F categorical equivalence}
$\F : \czPolm \to \bAlgc$ is a functor establishing a categorical equivalence.
\end{proposition}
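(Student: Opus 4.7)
The plan is to verify that $\F$ is a functor $\czPolm \to \bAlgc$ and then to establish it is a categorical equivalence by proving it is essentially surjective and fully faithful. Well-definedness on objects is handled by Proposition~\ref{prop: closed subsets algebraic lattice}, once I observe that $\Clp(X)$ is a Boolean sublattice of $\F(X)$ (top $\emptyset$, bottom $X$, meet given by union, join by intersection, complement by set-complement) and is countable because $X$ is second-countable. Well-definedness on morphisms is Proposition~\ref{prop: up hem cont and closed val multifunction lift to scott cont}, and preservation of composition is just the set-theoretic identity $(G \circ F)[A] = G[F[A]]$.

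For essential surjectivity, given $A \in \bAlgc$, the Boolean algebra $K(A)$ is countable, so classical Stone duality yields a second-countable Stone space $X$ with $\Clp(X) \cong K(A)$ as Boolean algebras, and hence as posets. Since any algebraic lattice is canonically isomorphic to the ideal completion of its poset of compact elements, and $K(\F(X)) = \Clp(X)$ by Proposition~\ref{prop: closed subsets algebraic lattice}, this lifts to $A \cong \F(X)$. Faithfulness is immediate: $\F(F) = \F(F')$ forces $F(x) = F[\{x\}] = F'[\{x\}] = F'(x)$ for each $x$. For fullness, given $\alpha : \F(X) \to \F(Y)$ in $\bAlgc$ I would define $T : X \rightrightarrows Y$ by $T(x) := \alpha(\{x\})$, which is closed-valued. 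Provided $T$ is upper-hemicontinuous (the hard step), the closed graph theorem makes $T[A]$ closed for closed $A$, and applying co-atomicity of $\alpha$ together with $\CoAt(A) = \{\{x\} : x \in A\}$ yields
\[
\alpha(A) \;=\; \bigwedge \big\{ \alpha(\{x\}) : x \in A \big\} \;=\; \overline{\bigcup_{x \in A} T(x)} \;=\; T[A] \;=\; \F(T)(A).
\]
Uniqueness is automatic, since any $T'$ with $\F(T') = \alpha$ satisfies $T'(x) = \alpha(\{x\})$.

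The main obstacle is verifying upper-hemicontinuity of $T$. Given an open $V \subseteq Y$, I would show that $A := \{x \in X : T(x) \not\subseteq V\}$ is closed. For $x_n \to x$ with each $x_n \in A$, I pick a decreasing clopen neighborhood basis $(U_k)$ of $x$ with $\bigcap_k U_k = \{x\}$; in $\F(X)$ (ordered by reverse inclusion) this is a directed supremum $\sup_k U_k = \{x\}$. Scott continuity of $\alpha$ then gives $\alpha(\{x\}) = \bigcap_k \alpha(U_k)$. Assuming toward a contradiction that $\alpha(\{x\}) \subseteq V$, the decreasing chain of closed sets $\alpha(U_k) \setminus V$ in the compact space $Y$ has empty intersection, so $\alpha(U_N) \subseteq V$ for some $N$; for $m$ large enough that $x_m \in U_N$, monotonicity of $\alpha$ forces $\alpha(\{x_m\}) \subseteq \alpha(U_N) \subseteq V$, contradicting $x_m \in A$. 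This interplay between Scott continuity of $\alpha$, the clopen neighborhood basis at $x$, and compactness of $Y$ is where the real work lies.
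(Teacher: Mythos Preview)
Your proposal is correct and follows essentially the same route as the paper: functoriality via the two cited propositions, essential surjectivity via Stone duality and ideal completion, faithfulness by evaluating at singletons, and fullness by setting $T(x) := \alpha(\{x\})$ with co-atomicity recovering $\F(T) = \alpha$. The only minor deviation is in the upper-hemicontinuity step: the paper first reduces to clopen $V$ and then uses that clopens are order-theoretically compact in $\F(Y)$ to extract the neighborhood $U$ directly from Scott continuity, whereas you argue sequentially and invoke topological compactness of $Y$; both are valid and amount to the same finite-approximation mechanism.
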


In particular, this shows that the algebraic lattices in $\bAlgc$ are co-atomic.

\begin{proof}
By proposition~\ref{prop: closed subsets algebraic lattice}, if $X$ is a compact and zero-dimensional Polish space, then $\F(X)$ is indeed an algebraic lattice whose compact element are $\Clp(X)$, which is a countable sublattice and a Boolean algebra.
By proposition~\ref{prop: up hem cont and closed val multifunction lift to scott cont}, $\F$ sends upper-hemicontinuous and closed-valued multifunctions to Scott-continuous and co-atomic functions.
And $\F$ preserves composition:
$\F (S \circ T) (A) =
S \circ T [A]
=
S [ T [A] ]
=
\F(S) (T[A])
= 
\F(S) \circ \F(T) (A)$.

Hence $\F$ is a functor. We show that it is 
(1)~essentially surjective, 
(2)~faithful, and 
(3)~full. 

Ad~(1). Let $L$ be an algebraic lattice whose set $K(L)$ of compact elements is a countable sublattice of $L$ and a Boolean algebra. Let $X$ be the Stone space corresponding to $K(L)$ under Stone duality. Then $X$ is a compact and zero-dimensional Polish space and $\Clp(X)$ is isomorphic to $K(L)$. Hence also the algebraic lattice $\F(X)$ (whose compact elements are $\Clp(X)$) is isomorphic to the algebraic lattice $L$ (whose compact elements are $K(L)$).

Ad~(2). If $S,T : X \rightrightarrows Y$ are two upper-hemicontinuous and closed-valued multifunctions between compact and zero-dimensional Polish spaces with $S \neq T$, then there is $x \in X$ with $S(x) \neq T(x)$, hence $\F(S) (\{ x \} ) \neq \F(T) ( \{ x \} )$, so $\F(S) \neq \F(T)$.

Ad~(3). Let $X$ and $Y$ be compact and zero-dimensional Polish spaces and let $f : \F(X) \to \F(Y)$ be a Scott-continuous and co-atomic function. Define the multifunction $T : X \rightrightarrows Y$ by $T(x) := f ( \{ x \} )$. We show that $T$ is upper-hemicontinuous and closed-valued with $\F(T) = f$.

By construction, $T(x) \in F(Y)$, so $T$ is closed-valued. For upper-hemicontinuity, let $x \in X$ and $V \subseteq Y$ open with $T(x) \subseteq V$. We need to find an open $U \subseteq X$ with $x \in U$ and $T[U] \subseteq V$.

We first note that it is enough to show this for clopen $V$: For a (merely) open $V$, it is, by zero-dimensionality, the union of clopen $V_i$, which hence form an open cover of the closed set $T(x)$, so there is a finite subcover $V_{i_1} , \ldots , V_{i_n}$ of $T(x)$. Hence, for the clopen $V^* :=  V_{i_1} \cup \ldots \cup V_{i_n}$, we have $T(x) \subseteq V^* \subseteq V$. Hence, if the claim holds for clopens, there is an open $U \subseteq X$ with $x \in U$ and $T[U] \subseteq V^* \subseteq V$, as needed.

So assume $V$ is clopen. So $V \in F(Y)$ is (order-theoretically) compact. By Scott-continuity of $f$,
\begin{align*}
\bigcap \big\{  f(C) :  x \in C \in \Clp(X) \big\} 
=
f \big( \bigcap \{ C \in \Clp(X) : \{x\} \subseteq C \} \big)
=
f( \{ x \} )
=
T(x)
\subseteq V.   
\end{align*}
Since $V$ is compact and $\leq$ is reverse inclusion, there hence is $U \in \Clp (X)$ with $x \in U$ and $f(U) \geq V$.
Hence, for $x' \in U$, we have, by monotonicity of $f$, that $T(x') = f(\{x'\}) \geq f(U) \geq V$, so $T(x') \subseteq V$, as needed.

To show $\F(T) = f$, let $A \in \F(X)$ and show $\F(T)(A) = f(A)$. Since $f$ is co-atomic, we have
\begin{align*}
f (A) = \bigwedge f [ \CoAt(A) ] = \Cl \bigcup \{ T(x) : x \in A \} = \Cl T[A].
\end{align*}
Since we already know that $T$ is upper-hemicontinuous and closed-valued, the corollary to the closed graph theorem implies that $T[A]$ is closed, so $f (A) = \Cl T[A] = T[A]$, as needed.
\end{proof}

Finally, we prove theorem~\ref{thm: Sef equivalent to domain theoretic category}: The category $\Sysef$ is equivalent to the category $\dynAlg$ via the functor $\G$ sending $(X,T)$ to $(\F(X) , \F(T))$ and sending $(e,f) : (X,T) \to (Y,S)$ to $(\F(e), \F(f))$.

\begin{proof}[Proof of theorem~\ref{thm: Sef equivalent to domain theoretic category}]
First note that $\dynAlg$ is indeed a category:
Composition of $(\epsilon, \pi) : (A,\alpha) \to (B, \beta)$ and $(\epsilon', \pi') : (B,\beta) \to (C, \gamma)$ is given by $(\epsilon' \circ \epsilon , \pi \circ \pi')$, which again satisfies properties~\ref{itm: algebraic morphisms 1}--\ref{itm: algebraic morphisms 5}.
The identity morphism is $\id_{(A,\alpha)} = (\id_A , \id_A)$ and clearly satisfies properties~\ref{itm: algebraic morphisms 1}--\ref{itm: algebraic morphisms 5}.

Next we show that $\G$ is a well-defined functor.
If $(X,T)$ is in $\Sysef$, then, by proposition~\ref{prop: F categorical equivalence}, $\G(X,T) = (\F(X), \F(T))$ is in $\dynAlg$. So assume $(e,f) : (X,T) \to  (Y,S)$ is an ef-pair and show that $\G(e,f) = (\F(e) , \F(f) )$ has properties~\ref{itm: algebraic morphisms 1}--\ref{itm: algebraic morphisms 5}.
(\ref{itm: algebraic morphisms 1})~Recall that the co-atoms of $\F(Y)$ are singletons, and $\F(f) ( \{ y \} ) = f(y)$ is again a singleton since $f$ is a function (i.e., multifunction with singleton values).
(\ref{itm: algebraic morphisms 2})~Show
$\F(f) \circ \F(e) = \id_{\F(X)}$.
Indeed, for $A \in \F(X)$, we have, since $f \circ e (x) = \{ x \}$, that $\F(f) \circ \F(e) (A) = f \circ e [A] = A$.
(\ref{itm: algebraic morphisms 3})~Show 
$\F(e) \circ \F(f) \leq \id_{\F(Y)}$.
Indeed, for $B \in \F(Y)$, we have, since $y \in e \circ f (y)$, that
$\F(e) \circ \F(f)(B) = e \circ f (B) \supseteq B$.
(\ref{itm: algebraic morphisms 4})~Show $\F (f) \F(S) \F(e)  \leq \F(T)$. 
It suffices to show, for $x \in X$, that $\F (f) \F(S) \F(e) (x)  \supseteq \F(T) (x)$.
So let $x' \in X$ with $x' \in \F(T) (x) = T(x)$ and show $x' \in \F (f) \F(S) \F(e) (x) = f \circ S \circ e (x)$. Since $f : (Y,S) \to (X,T)$ is a factor, $x \xrightarrow{T} x'$ implies that there are $y,y' \in Y$ such that $y \xrightarrow{f} x$, $y' \xrightarrow{f} x'$, and $y \xrightarrow{S} y'$.
Since $y \in e \circ f(y) = e (x)$, we have $x \xrightarrow{e} y$.
So $x \xrightarrow{e} y \xrightarrow{S} y' \xrightarrow{f} x'$, so $x' \in f \circ S \circ e (x)$, as needed.
(\ref{itm: algebraic morphisms 5})~Show $\F(T) \F(f) \leq \F(f) \F(S)$.
We show, for $y \in Y$, that 
$f \circ S (y) \subseteq T \circ f (y)$.
If $x' \in f \circ S (y)$, there is $y' \in Y$ with $y \xrightarrow{S} y' \xrightarrow{f} x'$. Write $x := f(y)$. Since $f$ is a factor, there is $x'' \in X$ with  
$y' \xrightarrow{f} x''$ and $x \xrightarrow{T} x''$. Since $f$ is a function, $x'' = f(y') = x'$. Hence $y \xrightarrow{f} x \xrightarrow{T} x'$, so $x' \in T \circ f (y)$. 
Also, it is straightforward that $\G$ preserves composition.

It remains to show that $\G$ is essentially surjective, full, and faithful. 

Essentially surjective: Since, by proposition~\ref{prop: F categorical equivalence}, $\F$ is an equivalence, given $(A, \alpha)$ in $\dynAlg$, there is $X$ in $\czPolm$ with an isomorphism $\iota : \F(X) \to A$. Moreover, for the morphism 
\begin{align*}
\F(X) \xrightarrow{\iota} A \xrightarrow{\alpha} A \xrightarrow{\iota^{-1}} \F(X)
\end{align*}
there is $T : X \rightrightarrows X$ in $\czPolm$ with $ \F(T) = \iota^{-1} \circ \alpha \circ \iota$. Hence $\G ( X , T) = (\F(X) , \F(T) )$ is isomorphic to $(A,\alpha)$ via the isomorphism $(\iota, \iota^{-1})$ in $\dynAlg$, whose inverse is $(\iota^{-1}, \iota)$.

Faithful: If $(e,f) , (e',f') : (X,T) \to (Y,S)$ are ef-pairs with $(e,f) \neq (e',f')$, then $e \neq e'$ (and also $f \neq f'$, since one determines the other), so, since $\F$ is faithful, $\F(e) \neq \F(e')$, so $(\F(e) , \F(f) ) \neq (\F(e') , \F(f'))$.

Full: If $(\epsilon , \pi) : \G(X,T) \to \G(Y,S)$ is a morphism, then $\epsilon : \F(X) \to \F(Y)$ and $\pi : \F(Y) \to \F(X)$ are morphisms in $\bAlgc$, so, since $\F$ is full, there are upper-hemicontinuous and closed-valued multifunctions $e : X \rightrightarrows Y$ and $f : Y \rightrightarrows X$ such that $\F(e) = \epsilon$ and $\F(f) = \pi$.
It suffices to show that $(e,f): (X,T) \to (Y,S)$ is an ef-pair.

We first show that $f$ is a surjective function. It is a function, since, by~\ref{itm: algebraic morphisms 1}, $f(y) = \F(f)(\{y\}) = \pi ( \{ y \} )$ is a co-atom, i.e., a singleton. To show that $f$ is surjective, note that, given $x \in X$, the set $\epsilon(\{x\})$ is nonempty: otherwise we get the contradiction
\begin{align*}
\{ x \} = \pi \circ \epsilon ( \{ x\} ) = \pi ( \emptyset ) = \F(f)(\emptyset) = f[\emptyset] = \emptyset.
\end{align*}
So let $y \in \epsilon(\{x\})$. Hence $\{ y \} \subseteq \epsilon(\{x\})$ and, by monotonicity, $f(y) = \pi(\{y\}) \subseteq \pi \circ \epsilon ( \{ x \} ) = \{ x \}$. So the singleton $f(y)$ is identical to $\{x\}$. Hence, by viewing $f$ as a function rather than a multifunction with singleton values, we have $f(y) = x$, as needed.

Next we note that, for all $x \in X$, we have $e(x) = f^{-1}(x)$. Indeed, if $y \in e(x) = \epsilon( \{ x \})$, then, as above, $f(y) = x$. And if $f(y) = x$, or, when rather viewing $f$ as a multifunction, $f(y) = \{ x\}$, then $\{ y \} \subseteq \epsilon ( \pi ( \{ y \} ) = \epsilon ( f(y) ) = \epsilon ( \{ x \} ) = e(x)$, so $y \in e (x)$, as needed.  

Now we show that $f : Y \to X$ is a factor. Concerning equivariance~\ref{itm: nondet equivariance hom back}, if $x \xrightarrow{T} x'$, then 
\begin{align*}
x' 
\in 
T(x) 
= 
\F(T) (\{x\}) 
\subseteq 
\pi \circ \F(S) \circ \epsilon (\{x\})
= 
\pi \circ \F(S) (e(x))
=
f [ S [ f^{-1} (x) ] ]
\end{align*}
hence there is $y,y' \in Y$ with $x \xrightarrow{f^{-1}} y \xrightarrow{S} y' \xrightarrow{f} x'$. In other words, $y \xrightarrow{f} x$, $y' \xrightarrow{f} x'$, and $y \xrightarrow{S} y'$, as needed.
Concerning equivariance~\ref{itm: nondet equivariance factor}, if $y \xrightarrow{S} y'$ and $y \xrightarrow{f} x$, write $x' := f(y')$. Then 
\begin{align*}
x' 
\in 
f [ S(y) ]
=
\pi \circ \F(S) ( \{ y \} )
\subseteq
\F(T) \circ \pi ( \{ y \} )
=
T [ f (y) ],  
\end{align*}
so there is $x_0 \in X$ with $y \xrightarrow{f} x_0 \xrightarrow{T} x'$. Since $f$ is a function, $x_0 = x$. In sum, we have $x' \in X$ with $x \xrightarrow{T} x'$ and $y' \xrightarrow{f} x'$, as needed.

Finally, to show that $(e,f)$ is an ef-pair, we know, by proposition~\ref{prop: ef pairs determine each other}, that there is a unique embedding $\underline{f} : (X,T) \to (Y,S)$ such that $(\underline{f},f)$ is an ef-pair and, for all $x$, $\underline{f}(x) = f^{-1}(x) = e(x)$. 
\end{proof}

\subsection{Proofs from section~\ref{sec: algebroidal categories of systems}}
\label{app: proofs from section algebroidal categories of systems}

\begin{proof}[Proof of the claim in example~\ref{exm: deterministic systems not algeboidal wrt finite system}]
Assume there was a factor $f : (2^\omega , \sigma) \to (Y,S)$ with $S: Y \to Y$ a function on a finite discrete space $Y$ with at least two elements.
Then $\{ f^{-1} (y) : y \in Y \}$ is a clopen partition of $2^\omega$, so there is $k$ such that any two sequences $x,x' \in 2^\omega$ that agree on their first $k$ elements are in the same partition cell, i.e., $f(x) = f(x')$.

Our strategy is to find $y \neq y'$ in $Y$ and $j \geq k$ such that $S^j (y) \neq y'$.
Then we get the desired contradiction as follows: 
Since $f$ is surjective, there are $x,x' \in 2^\omega$ with $f(x) = y$ and $f(x') = y'$.
Consider the concatenation $x'' := (x \restriction j) x' \in 2^\omega$. Then, by agreement on the first $k$ elements, $f(x'') = f(x) = y$. By construction, $\sigma^j (x'') = x'$. So, by equivariance, $S^j(y) = S^j(f(x'')) = f ( \sigma^j (x'') ) = f ( x' ) = y'$, contradiction.

If $S$ is the identity function on $Y$, this is easy: since $Y$ has at least two elements, let $y \neq y'$ and pick $j := k$, so $S^j(y) = y \neq y'$.
So we assume that there is $y \in Y$, with $S(y) \neq y$. Consider the orbit $y_n := S^n(y)$. Since $Y$ is finite, there is a first time $n$ such that $y_n = y_i$ for some $i < n$. 
If $i = 0$, the orbit $(y_n)$ is of the form
$y_0 (y_1 \ldots y_{n-1} y_0)^\omega$ with $y_1 \neq y_0$.
If $i > 0$, the orbit $(y_n)$ is of the form
$y_0 \ldots y_{i-1} (y_i \ldots y_{n-1})^\omega$ with $y_i \neq y_0$ (otherwise $y_0 = y_i = y_n$ with $0 < i < n$, so $n$ would not be minimal).
In either case, the orbit $(y_n)$ is of the form $\rho_0 (\rho_1)^\omega$ with $\rho_0$ and $\rho_1$ being finite paths in $(Y,S)$. The first state of $\rho_0$ is $y_0 = y$. Let $y'$ be the first state of $\rho_1$ (either $y_1$ or $y_i$). Then, for $j := k |\rho_1| \geq k$, we have $S^j(y') = y' \neq y$, as needed.
\end{proof}

\begin{proof}[Proof of lemma~\ref{lem: commuting trianlge lemma}]
Let $f : (X,T) \to (Y,S)$ and $g : (X,T) \to (Z,R)$ be factors of systems in $\allSys$ and let $h : Y \to Z$ be a function with $h \circ f = g$. We show that $h$ is a factor.

First note that, for $C \subseteq Z$, we have $h^{-1}(C) = f[g^{-1}(C)]$:
Indeed, for $y \in Y$, if $y \in h^{-1}(C)$, then, by surjectivity, let $x \in X$ with $f(x) = y$, so $g(x) = h ( f(x) ) = h(y) \in C$, so $y \in  f[g^{-1}(C)]$. Conversely, if $y = f(x)$ for some $x \in g^{-1}(C)$, then $h(y) = h ( f(x) ) = g(x) \in C$, so $y \in h^{-1}(C)$.

Now, $h$ is continuous: If $C \subseteq Z$ is closed, then $h^{-1}(C) = f[g^{-1}(C)]$ is an image of a closed set and hence closed, since $h$ is closed qua continuous function between compact Hausdorff spaces.
Also, $h$ is surjective: if $z \in Z$, let, by surjectivity, $x \in X$ with $g(x) = z$, then, for $y := f(x) \in Y$, we have $h(y) = h ( f(x) ) = g(x) = z$. 

Concerning equivariance~\ref{itm: nondet equivariance hom back}, if $z,z' \in Z$ with $z \xrightarrow{R} z'$, there is $x,x' \in X$ with $g(x) = z$, $g(x') = z'$, and $x \xrightarrow{T} x'$, so, for $y := f(x)$ and $y' := f(x')$ in $Y$ we have $y \xrightarrow{S} y'$ and $h (y) = h (f(x)) = g(x) = z$ and, similarly, $h(y') = z'$, as needed.

Concerning equivariance~\ref{itm: nondet equivariance factor}, if $y,y' \in Y$ and $z \in Z$ with $h(y) = z$ and $y \xrightarrow{S} y'$, consider $z' := h(y')$. We need to show $z \xrightarrow{R} z'$. Since $f$ is a factor, there is $x,x' \in X$ with $f(x) = y$, $f(x') = y'$, and $x \xrightarrow{T} x'$. Hence $g(x) \xrightarrow{R} g(x')$, and we have $g(x) = h ( f(x) ) = h (y)$ and, similarly, $g(x') = h (y')$, so $z = h(y) \xrightarrow{R} h(y') = z'$.  
\end{proof}

\begin{proof}[Proof of lemma~\ref{lem: refine partitions}]
Recall that $\big( (X_i,T_i), (e_{i i+1}, f_{i i+1} ) \big)_{i \in \omega}$ is an $\omega$-chain in $\subSysef$ and $\big( (X,T) , (e_i,f_i) \big)$ is a colimit.
Assume $\mathcal{C} = \{ C_1 , \ldots , C_n \}$ is a finite partition of $X$ consisting of closed sets. 
Assume for contradiction, that there is no $i \in \omega$ such that $X / f_i$ refines $\mathcal{C}$.
Then, for every $i \in \omega$, there is $x,x' \in X$ with $f_i(x) = f_i(x')$ but $x \in C_k$ and $x' \in C_l$ for $k \neq l$,
so the following subset of $X \times X$ is nonempty:
\begin{align*}
F_i 
:=
\big\{
(x,x') \in X \times X : f_i (x) = f_i(x')
\big\}
\cup
\bigcup_{k \neq l \text{ in } \{ 1, \ldots , n \}}
C_k \times C_l.
\end{align*}
This set is closed qua finite union of closed sets (the equivalence relation $\equiv$ of a continuous function $f$ into a Hausdorff space is closed).
Moreover, if $i \leq j$ in $\omega$, then $F_i \supseteq F_j$ (if $f_j(x) = f_j (x')$, then $f_i(x) = f_{ij} \circ f_j(x) = f_{ij} \circ f_j (x') = f_i (x')$). Since $X \times X$ is compact, there is 
$(x,x') \in \bigcap_{i \in \omega} F_i$. So there is $k \neq l$ with $x \in C_k$ and $x' \in C_l$, so $x \neq x'$. However, for all $i \in \omega$, we also have $f_i(x) = f_i(x')$, which implies $x = x'$ (proposition~\ref{prop: every chain has colimit}).
\end{proof}

\subsection{Proofs from section~\ref{sec: universality for deterministic systems}}
\label{app: proofs from section universality for deterministic systems}

\begin{proof}[Proof of proposition~\ref{prop: path system}]
Ad~\ref{lem: path system 1}.
It is straightforward to show that $\overline{X}$ is a closed subset of $\prod_\omega X$.
So $\overline{X}$ again is a zero-dimensional compact Polish space.
It also is immediate that $\sigma : \overline{X} \to \overline{X}$ is a continuous function. 
Hence, per definition~\ref{def: category of systems}, $\Path (X,T) = ( \overline{X} , \sigma )$ is in $\detSysef$ iff $\overline{X}$ is nonempty, as needed.

Ad~\ref{lem: path system 2}.
First, let us assume that $(X,T)$ is total.
Write $p := \pi_0 : \overline{X} \to X$ for the projection to the first component, and show that $p$ is a factor.
Indeed, $p$ is a continuous: if $U \subseteq X$ is open, then $p^{-1} (U) = \pi_0^{-1} (U) \cap \overline{X}$, which is open.
And $p$ is surjective: If $x \in X$, then, since $T$ is total, we can continue $x$ to some path $\overline{x} \in \overline{X}$, so $p (\overline{x}) = x$. 
Concerning equivariance~\ref{itm: nondet equivariance hom back}, 
if $x \xrightarrow{T} x'$, continue this, since $T$ is total, into a path $\overline{x}$:
\begin{align*}
x \xrightarrow{T} x' \xrightarrow{T} x_2 \xrightarrow{T} x_3 \xrightarrow{T}  \ldots,  
\end{align*}
then 
$\overline{x} \xrightarrow{p} x$,
$\sigma(\overline{x}) \xrightarrow{p} x'$, and
$\overline{x} \xrightarrow{\sigma} \sigma(\overline{x})$.
Concerning equivariance~\ref{itm: nondet equivariance factor}, if
$\overline{x} \xrightarrow{\sigma} \sigma(\overline{x})$ 
and
$\overline{x} \xrightarrow{p} x$,
then, for $x' := p ( \sigma(\overline{x}) )$, we have, since $\overline{x}$ is a path, that $x \xrightarrow{T} x'$, as needed.

Conversely, assume that $p : \overline{X} \to X$ is a factor, and show that $(X,T)$ is total. Given $x \in X$, there is, since $p$ is surjective, some path $(x_n) \in \overline{X}$ with $p\big( (x_n) \big) = x$. Hence $x = x_0 \xrightarrow{T} x_1$, i.e., $x$ has a successor state, as needed.  
\end{proof}

\subsection{Proofs from section~\ref{sec: algebroidal categories of deterministic systems}}
\label{app: proofs from section algeboidal categories of deterministic systems}

\begin{proof}[Proof of lemma~\ref{lem: Curtis-Lyndon-Hedlund Theorem}]
We can assume that $X$ and $Y$ are nonempty: Otherwise, if $X = \emptyset$, then $f = \emptyset$ and we can choose $N := 0$ and $\varphi = \emptyset : B_0(X) \to B$. If $Y = \emptyset$, then, since $f$ is a function, also $X = \emptyset$, and the claim, as just seen, holds.

Since $f$ is continuous and $\{ C_b(Y) : b \in B \}$ is a clopen partition of $Y$, also 
$\{ f^{-1}( C_b(Y) ) : b \in B \}$ 
is a clopen partition of $X$. (Since $X$ and $Y$ are nonempty, these partitions contain at least one cell.) So there is $N$, such that this partition is refined by the partition $\{ C_w (X) : w \in A^N\}$.
So we can define $\varphi : B_N(X) \to B$ by mapping $w \in B_N(X)$ to the unique $b \in B$ such that $C_w (X) \subseteq f^{-1}( C_b(Y) )$.
Finally, let $x \in X$ and $k \geq 0$ and show 
$f(x) (k) = \varphi ( x(k), \ldots , x(k + N-1) )$.
By equivariance, 
\begin{align*}
b := f(x) (k)
=
\sigma^k ( f(x) ) (0)
=
f ( \sigma^k (x) ) (0)
\in 
B.
\end{align*}
So 
$x' := \sigma^k (x) \in f^{-1}( C_b(Y) )$.
Write $w := (x'(0), \ldots , x'(N-1)) = (x(k), \ldots , x'(k + N-1)) \in B_N(X)$.
We have
$x' \in C_w (X) \cap  f^{-1}( C_b(Y) )$, hence, qua refinement, $C_w (X) \subseteq f^{-1}( C_b(Y) )$, 
so 
$\varphi(w) = b$.
Hence
$f(x) (k) = b = \varphi(w) = \varphi ( x(k), \ldots , x(k + N-1) )$, as needed.

For the `moreover' part, note that, for $n \geq N$, the partition $\{ C_w (X) : w \in A^n\}$ still refines $\{ C_w (X) : w \in A^N\}$, which was all that is needed for the above proof.
\end{proof}

\begin{proof}[Proof of lemma~\ref{lem: pro category closed under limits}]
To fix notation, let $(A_i, f_{i i+1})_{i \in \omega}$ be an $\omega$-chain with colimit $(A,f_i)_{i \in \omega}$. 
For each $i \in \omega$, let $(A_i^j, f_i^{j j+1})_{j \in \omega}$ be an $\omega$-chain with colimit $(A_i, f_i^j)_{j \in \omega}$ such that each $A_i^j$ is category-theoretically finite in $\mathsf{C}$.
For each $i \in \omega$, we construct a strictly increasing sequence $(k_i^j)_{j \in \omega}$ and morphisms 
$g_{i i+1}^j : A_i^{k_i^j} \to A_{i+1}^{k_{i+1}^j}$ as follows (see figure~\ref{fig: finding a diagonal omega chain}).

For $i = 0$, we set $k_i^j := j$. For $i = 1$, we build $k_1^0 < k_1^1 < k_1^2 < \ldots$ inductively. Set $k_1^{-1} := 0$ and assume $k_1^{j-1}$ is defined, Note that we have the $\omega$-chain 
$(A_1^j, f_1^{j j+1})_{j \in \omega}$ 
with colimit $A_1$, and we have the morphism 
$f_{01} \circ f_0^{k_0^j} : A_0^{k_0^j} \to A_1$. 
Since $A_0^{k_0^j}$ is category-theoretically finite, there hence is $k_1^j$ and a morphism 
$g_{0 1}^j : A_0^{k_0^j} \to A_1^{k_1^j}$
such that
$f_{01} \circ f_0^{k_0^j} = f_1^{k_1^j} \circ g_{0 1}^j$ and
\begin{itemize}
\item[($*$)]
For each $l \geq k_1^j$, there is a unique morphism
$u : A_0^{k_0^j} \to A_1^l$
such that
$f_{01} \circ f_0^{k_0^j} = f_1^l \circ u$.
\end{itemize}
Now we continue similarly for $i = 2, 3 , \ldots$, and we get the diagram in figure~\ref{fig: finding a diagonal omega chain}.

\begin{figure}
\begin{equation*}
\begin{tikzcd}[column sep = large]
A_0
\arrow[r, "f_{01}"]
&
A_1
\arrow[r, "f_{12}"]
&
A_2
\arrow[r]
&
\cdots
&
A
\\
\vdots
&
\vdots
&
\vdots
&
~
&
\\
A_0^{k_0^2}
\arrow[u]
\arrow[r, "g_{01}^2"]
&
A_1^{k_1^2}
\arrow[u]
\arrow[r, "g_{12}^2"]
&
A_2^{k_2^2}
\arrow[u]
\arrow[r]
\arrow[ur, dashed,  "h_{23}"]
&
\cdots
&
\\
A_0^{k_0^1}
\arrow[u, "f_0^{k_0^1 k_0^2}"]
\arrow[r, "g_{01}^1"]
&
A_1^{k_1^1}
\arrow[u, "f_1^{k_1^1 k_1^2}"]
\arrow[r, "g_{12}^1"]
\arrow[ur, dashed, "h_{12}"]
&
A_2^{k_2^1}
\arrow[u, "f_2^{k_2^1 k_2^2}"]
\arrow[r]
&
\cdots
&
\\
A_0^{k_0^0}
\arrow[u, "f_0^{k_0^0 k_0^1}"]
\arrow[r, "g_{01}^0"]
\arrow[ur, dashed, "h_{01}"]
&
A_1^{k_1^0}
\arrow[u, "f_1^{k_1^0 k_1^1}"]
\arrow[r, "g_{12}^0"]
&
A_2^{k_2^0}
\arrow[u, "f_2^{k_2^0 k_2^1}"]
\arrow[r]
&
\cdots
&
\\
\end{tikzcd}
\end{equation*}
\caption{Finding a diagonal $\omega$-chain.}
\label{fig: finding a diagonal omega chain}
\end{figure}

It is straightforward to show that this diagram commutes.
So we can define the `diagonal' $\omega$-chain
$(A_m^{k_m^m} , h_{m m+1})_{m \in \omega}$,
where
\begin{align*}
h_{m m+1} 
:= 
f_{m+1}^{k_{m+1}^m k_{m+1}^{m+1} } \circ g_{m m+1}^m
=
g_{m m+1}^{m+1} \circ f_{m}^{k_m^m k_m^{m+1}}.
\end{align*}
Now it is a bit tedious but straightforward to show that $A$ also is a colimit of this $\omega$-chain, via the natural choice for the projections: $h_m := f_m \circ f_m^{k_m^m}$.
\end{proof}

\printbibliography

\end{document}